\documentclass{amsart}
\setlength\hfuzz{2pt}
\setlength\vfuzz{2pt}
\usepackage{amssymb,amsmath,latexsym,enumerate, dsfont,xspace}
\usepackage{graphicx,verbatim,enumerate,%
ifpdf,mdwlist}
\usepackage{color, soul}

\usepackage[normalem]{ulem}
\usepackage{mathabx}
\ifpdf
\usepackage{hyperref}
\else
\usepackage[hypertex]{hyperref}
\fi
\usepackage{graphicx}
\usepackage{cancel}
\usepackage{float}
\renewcommand{\phi}{\varphi}

\newtheorem{thm}{Theorem}[section]
\newtheorem{cor}[thm]{Corollary}
\newtheorem{lem}[thm]{Lemma}

\newtheorem{prop}[thm]{Proposition}
\newtheorem{obs}[thm]{Observation}
\newtheorem{claim}[thm]{Claim}

\theoremstyle{definition}

\newtheorem{defn}[thm]{Definition}

\newenvironment{notation}
{\begin{trivlist} \item[] {\bf Notation}. } {\end{trivlist}}

\DeclareMathOperator{\ran}{ran}

\renewcommand{\phi}{\varphi}

\newcommand{\abs}[1]{\lvert#1\rvert}
\newcommand{\Id}{\text{Id}}
\providecommand{\SSR}{strong subset reduces\xspace}

\providecommand{\SSNs}{strong subset reductions\xspace}

\newcommand{\SLFTJ}{singly light for the jump\xspace}
\newcommand{\dpl}{{\dotplus}}
\renewcommand{\O}{{\mathcal{O}}}

\title[On the computable  Friedman-Stanley jump]{Investigating the computable\\ Friedman-Stanley jump}

\author[U.~Andrews]{Uri Andrews}
\address{Department of Mathematics\\
University of Wisconsin\\
Madison, WI 53706-1388\\
USA}
\email{{andrews@math.wisc.edu}}
\author[L.~San Mauro]{Luca San Mauro}
\address{Department of Mathematics, Sapienza University of Rome, Italy}
\email{{luca.sanmauro@uniroma1.it}}

\keywords{Equivalence relations, hyperarithmetic equivalence relations, computable reducibility, computable Friedman-Stanley jump}

\makeatletter
\@namedef{subjclassname@2020}{%
	\textup{2020} Mathematics Subject Classification}
\makeatother

\subjclass[2020]{03D30, 03D55, 03E15}

\begin{document}

\maketitle

\begin{abstract}
	We answer several questions about the computable Friedman-Stanley jump on equivalence relations. This jump, introduced by Clemens, Coskey, and Krakoff, deepens the natural connection between the study of computable reduction and its Borel analog studied deeply in descriptive set theory.
\end{abstract}

\section{Introduction}
Computable reducibility  of countable equivalence relations is a natural computability theoretic way of characterizing some equivalence relations as more complex than others. For equivalence relations $E$ and $R$ on the natural numbers, we say that $E$ is \emph{computably reducible} to $R$, written $E\leq R$, if there is a computable function $f$ so that $$x\mathrel{E}y \leftrightarrow f(x)\mathrel{R}f(y).$$ This notion was first introduced by Ershov \cite{ErshovPositiveEquivalences} and has seen a recent resurgence of interest \cite{ABSM, CCK,FokinaFriedman, FokinaFriedmanHarizanovEtal} with special attention paid to local structures of equivalence relations of a given complexity class such as the c.e. equivalence relations (\emph{ceers}) \cite{GaoGerdes, TheoryOfCeers, SelffullnessAndUniformJoin, JoinsAndMeets, JumpsOfCeers, SurveyOnUniversalCeers, IndexSetsOfCeers, UEICeers, IsomorphismClassesOfCeers, BadaevSorbi}  
and how they naturally arise from algebra \cite{WordProblemsAndCeers, SorbiNies,  CFMiller3, JourneyToCeStructures, CeLinearOrders, GavryushkinKhoussainovStephan}
or other levels of the arithmetical hierarchy \cite{CHM, IanovskiMillerNgNies} or levels of the Ershov hierarchy \cite{NgYu, ErshovHierarchy}. 

Part of the motivation behind studying computable reducibility is that it is a computability theoretic natural analog of the descriptive set theoretic notion of Borel reducibility, intensively studied in descriptive set theory~\cite{FriedmanStanley,kanoveui,gao}, where the equivalence relations are on standard Borel spaces and the reducing function is allowed to be Borel instead of computable.

\smallskip

In the context of computable reducibility, several notions of a jump have been studied:
\begin{defn}\label{def:jumps}
	For $E$ an equivalence relation: 
	\begin{itemize}
		\item $E'$ is defined by $x \mathrel{E'} y$ if and only if $x=y$ or $\phi_x(x)\downarrow$, $\phi_y(y)\downarrow$, and  $\phi_x(x) \mathrel{E} \phi_y(y)$.
		\item $E^{+}$ is defined by $x \mathrel{E^+} y$ if and only if $[F_x]_E = [F_y]_E$, where $F_x$ is the finite set with canonical index $x$ and for any set $S$, $[S]_E$ is the \emph{$E$-closure} of $S$, i.e., the set of elements $E$-equivalent to a member of $S$.
		\item $E^\dpl$ is defined by $x \mathrel{E^\dpl} y$ if and only if $[W_x]_E = [W_y]_E$, where $W_i$ is the $i$th c.e.\ set.
	\end{itemize}
\end{defn}

The first two jumps here were introduced by Gao and Gerdes \cite{GaoGerdes} and the last one was introduced by Clemens, Coskey, and Krakoff \cite{CCK}.
The last two here are finite and computable analogs of the Friedman-Stanley jump studied in descriptive set theory.

\begin{defn}
	For $E$ an equivalence relation on the standard Borel space $X$, the Friedman-Stanley jump $E^+$ of $E$ is the equivalence relation on $X^{\omega}$ given by 
	$$ f \mathrel{E^+} g \leftrightarrow [\ran(f)]_E=[\ran(g)]_E.$$
\end{defn}

Friedman and Stanley \cite{FriedmanStanley} showed that this jump operator is proper. That is, $E^+>_B E$ for any Borel equivalence relation $E$. Clemens, Coskey, and Krakoff \cite{CCK} showed that $E^\dpl > E$ for any hyperarithmetic (HYP in the sequel) equivalence relation. They also showed that a $\Sigma^1_1$-complete equivalence relation is a fixed point for the jump $\dpl$, i.e., $E\equiv E^\dpl$. Clemens, Coskey, and Krakoff \cite{CCK} ask several natural questions regarding features of the jump operator $\dpl$. In this paper, we answer these questions.  

Throughout the rest of this paper, the ``jump'' of an equivalence relation will always refer to the computable Friedman-Stanley jump operator $\dpl$.

\subsection{Preliminaries} We assume that the reader is familiar with the fundamental notions and techniques of computability theory.

\smallskip

 All our equivalence relations have domain the set $\omega$ of the natural numbers. Equivalence relations are  \emph{infinite}, if they have infinitely many equivalence classes; otherwise, they are \emph{finite}. For a c.e.\ set $A$, the equivalence relation $E_A$ is given by $x \mathrel{E_A} y$ if and only if $x=y$ or $x,y\in A$. A ceer of the form $E_A$ is called \emph{1-dimensional}.


The identity relation on $\omega$ is denoted by $\Id$. Note that $\Id^{\dpl}$ is equivalent is to $=^{ce}$, where $x \mathrel{=^{ce}} y$ if and only if $W_x=W_y$.
 Following \cite{JoinsAndMeets,ABSM}, we say that:
\begin{itemize}
\item $E$ is  \emph{light} if $\Id\leq E$;
\item $E$ is \emph{dark} if $E$ is infinite and not light;
\item $E$ is \emph{dark minimal} if it is dark and all equivalence relations $<E$ are finite.
\end{itemize} 

The next lemma will be used a few times.
 
 \begin{lem}[{\cite[Lemma~1.13]{ABSM}}]\label{dark minimal intersects all classes}
 Let $E$ be a dark minimal equivalence relation. If $W_e$ intersects infinitely many $R$-classes, then $W_e$ must intersect every $R$-class.
 \end{lem}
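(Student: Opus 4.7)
My plan is to argue by contradiction, deriving lightness of $E$ from the assumption that some class $[a]_E$ is disjoint from $W_e$. The first move will be to associate to $W_e$ an equivalence relation on $\omega$: fixing a computable enumeration $w_0, w_1, \ldots$ of $W_e$, I would define $R$ on $\omega$ by $n \mathrel{R} m \iff w_n \mathrel{E} w_m$. Then $R$ reduces to $E$ via $n \mapsto w_n$, and $R$ has infinitely many classes since $W_e$ meets infinitely many $E$-classes. Because $E$ is dark minimal, no infinite equivalence relation can lie strictly below $E$, and so this forces $R \equiv E$. Fixing a computable reduction $g$ of $E$ to $R$ and composing, $h(x) := w_{g(x)}$ becomes a computable self-reduction of $E$ whose range is contained in $W_e$: one has $x \mathrel{E} y \iff h(x) \mathrel{E} h(y)$ and $h(x) \in W_e$ for every $x$.

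The heart of the argument will then be to iterate $h$ starting from $a$ and to show that the classes $[a]_E, [h(a)]_E, [h^2(a)]_E, \ldots$ are pairwise distinct, whence the computable map $i \mapsto h^i(a)$ witnesses $\Id \leq E$ and contradicts the darkness of $E$. Suppose for contradiction that $h^i(a) \mathrel{E} h^j(a)$ for some $i < j$. Applying the reduction property of $h$ in reverse $i$ times yields $a \mathrel{E} h^{j-i}(a)$; but $h^{j-i}(a) \in W_e$ since $j - i \geq 1$, while $[a]_E \cap W_e = \emptyset$ by hypothesis, which is the desired contradiction.

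I expect the main obstacle to lie in the setup rather than the iteration. One has to notice that dark minimality, applied to the equivalence relation $R$ read off from the enumeration of $W_e$, yields not merely $R \leq E$ but the full two-sided equivalence $R \equiv E$, and hence a computable self-reduction of $E$ whose range is trapped inside $W_e$. Once such an $h$ is on the table, the orbit argument is purely combinatorial and routine.
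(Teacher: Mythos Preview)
The paper does not supply its own proof of this lemma; it is quoted with a citation to \cite[Lemma~1.13]{ABSM}. Your argument is correct and is essentially the standard one: pull back $E$ along a computable enumeration of $W_e$ to get an infinite $R \leq E$, invoke dark minimality to obtain $R \equiv E$ and hence a computable self-reduction $h$ of $E$ with range inside $W_e$, and then observe that the $h$-orbit of any element $a$ with $[a]_E \cap W_e = \emptyset$ yields a computable transversal, contradicting darkness. The only point worth making explicit is that $W_e$ is infinite (since it meets infinitely many classes), so the total enumeration $n \mapsto w_n$ exists; everything else is airtight.
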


	For two equivalence relations $E,R$,
\begin{itemize}
\item the \emph{uniform join} $E\oplus R$ is the equivalence relation defined by $x \mathrel{E\oplus R} y$ if and only if $x=2k,y=2l$ and $k\mathrel{E} l$ or $x=2k+1$, $y=2l+1$ and $k\mathrel{R}l$;
\item the \emph{cross product} $E \times R$ is the equivalence relation defined by
\[ 
\langle x, y\rangle ( E \times R ) \langle u, w\rangle \Leftrightarrow
(x \mathrel{E} u  \, \wedge \, y \mathrel{R} w).
\]
\end{itemize}

For a countable sequence $(E_i)_{i\in \omega}$, $\oplus_i E_i$ is given by $\langle x,y \rangle \mathrel{\oplus_i E_i} \langle v,w\rangle$ if and only if $x=v$ and $y \mathrel{E_x} w$.

The following definition gives a convenient notation.

\begin{defn}
	For sets $X,Y$ and an equivalence relation $E$, we write $X\subseteq_E Y$ to mean $[X]_E\subseteq [Y]_E$. Similarly, we write $X=_E Y$ to mean $[X]_E= [Y]_E$ and $X\subsetneq_E Y$ to mean $[X]_E\subsetneq [Y]_E$.
	
	For any set $X$ and equivalence relation $E$, we write $X/E$ for the set of $E$-equivalence classes of members of $X$.
\end{defn}
	
\subsection{Questions of Clemens, Coskey, and Krakoff}  For every ceer $E$, we have $E^\dpl \leq \Id^\dpl$   \cite[Proposition~4.1]{CCK}. And certainly any light ceer satisfies $\Id^\dpl \leq E^\dpl$. This motivates the following definition:

\begin{defn}
	A ceer $E$ is \emph{light for the jump} if $\Id^\dpl\leq E^\dpl$. We note that this is the notion of highness for ceers using this jump operator.
\end{defn}

Clemens, Coskey, and Krakoff \cite[Question 1]{CCK} ask for a characterization of the c.e.\ sets $A$ so that $E_A$ is light for the jump.
In Section \ref{section 2: ceers which are light for the jump}, we give the following solution: 

\begin{thm}
	For a c.e.\ set $A$, $E_A^\dpl\equiv \Id^\dpl$ if and only if $A$ is not hyperhypersimple. Thus, the property of being ``light for the jump'' is $\Sigma^0_4$-complete. 
\end{thm}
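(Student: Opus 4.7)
The plan is to prove the equivalence in two directions and then derive the $\Sigma^0_4$-completeness as a corollary of a classical index-set result.

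For the $(\Leftarrow)$ direction, assume $A$ is not hyperhypersimple. The inequality $E_A^\dpl \leq \Id^\dpl$ is immediate from \cite[Proposition~4.1]{CCK}. For the converse, if $A$ is finite then $E_A \equiv \Id$ and we are done; otherwise Post's characterization of hhs provides a computable $f$ with $(D_{f(n)})_{n \in \omega}$ a sequence of pairwise disjoint nonempty finite sets each meeting $\bar A$. The reduction will be $W_{g(x)} := \bigcup_{n \in W_x} D_{f(n)}$, uniformly c.e.\ in $x$. The trace $[W_{g(x)}]_{E_A} \cap \bar A$ is the disjoint union $\bigsqcup_{n \in W_x}(D_{f(n)} \cap \bar A)$ of nonempty subsets of $\bar A$, from which $W_x$ can be recovered uniquely. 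Hence $[W_{g(x)}]_{E_A} = [W_{g(y)}]_{E_A}$ implies $W_x = W_y$, so $g$ reduces $\Id^\dpl$ to $E_A^\dpl$.

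For the $(\Rightarrow)$ direction, assume $A$ is hyperhypersimple and that $g$ reduces $\Id^\dpl$ to $E_A^\dpl$; I aim for a contradiction. If $A$ is cofinite then $E_A$ has only finitely many classes, hence so does $E_A^\dpl$, immediately contradicting the existence of such a reduction; so $A$ is coinfinite, hence simple. Setting $V_n := W_{g(n)}$, we obtain a uniformly c.e.\ family whose $E_A^\dpl$-classes are pairwise distinct across distinct $=^{ce}$-classes of $W_n$. Each such class is of one of four types: the empty set, $A$ itself, a finite nonempty subset of $\bar A$, or a c.e.\ superset of $A$ strictly greater than $A$. The plan is then to exploit the nonexistence, for hhs $A$, of a uniformly c.e.\ family of pairwise disjoint c.e.\ sets each meeting $\bar A$, to contradict the injectivity of the reduction. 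The main obstacle is the extraction step: bridging a uniformly c.e.\ family with distinct $E_A^\dpl$-classes (which, for the last type above, means distinct $\bar A$-traces $V_n \cap \bar A$) to a genuinely pairwise disjoint family meeting $\bar A$. I expect this will require either the Lachlan characterization that the lattice of c.e.\ supersets of $A$ modulo finite is a Boolean algebra, or a recursion-theoretic diagonalization carried out over inputs more elaborate than singletons.

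For the $\Sigma^0_4$-completeness of being light for the jump, the equivalence just established identifies $\{e : E_{W_e}\text{ is light for the jump}\}$ with $\{e : W_e\text{ is not hyperhypersimple}\}$, and the latter is the classically $\Sigma^0_4$-complete index set, so the former inherits $\Sigma^0_4$-completeness.
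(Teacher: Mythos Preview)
Your $(\Leftarrow)$ direction contains a small but genuine error: a disjoint array of \emph{finite} sets $D_{f(n)}$ each meeting $\bar A$ characterizes non-\emph{hypersimplicity}, not non-hyperhypersimplicity. Since there exist hypersimple sets that are not hyperhypersimple, your argument as written does not cover all non-hhs $A$. The fix is to use a weak array of c.e.\ sets $W_{f(n)}$ instead of canonical finite sets; with that change the rest of your argument goes through verbatim, and is essentially \cite[Theorem~4.8]{CCK}, which the paper simply cites.

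The real gap is in $(\Rightarrow)$, and you have correctly flagged it yourself. Setting $V_n := W_{g(n)}$ gives a family with distinct $E_A^\dpl$-classes but no disjointness structure, and the ``extraction step'' you identify as the main obstacle is exactly where the paper's work lies. The paper's route is to introduce an intermediate combinatorial notion: a ceer $E$ is \emph{singly light for the jump} if there is a uniformly c.e.\ sequence $(V_i)_{i\in\omega}$ such that each $V_i$ contains an element whose $E$-class is missed by every other $V_j$. The substantial theorem of Section~2 is that any ceer light for the jump is singly light for the jump; this is proved by a finite-injury priority argument that repeatedly uses the recursion theorem to probe the given reduction $h$ on carefully grown finite sets $B_i$ and $B_i\cup\{a_i\}$, exploiting that $W_{h(B_i)}\subsetneq_E W_{h(B_i\cup\{a_i\})}$. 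Once $E_A$ is known to be singly light for the jump, a short combinatorial argument extracts from the witnessing $(V_i)$ a disjoint array of finite sets each meeting $\bar A$, showing $A$ is not hyperhypersimple. So the missing idea is precisely this intermediate characterization; it is this, rather than the Lachlan Boolean-algebra description or a direct attack from singletons, that bridges your gap. Your $\Sigma^0_4$-completeness argument is correct and matches the paper's.
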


This line of inquiry led us to wonder what the picture looks like for the double-jump. That is, which sets $A$ have the property that $\Id^{\dpl\dpl}\leq E_A^{\dpl\dpl}$, i.e. $E_A$ is high$_2$ for the computable FS-jump. And we also ask whether there are any ceers $E$ so that $\Id^{\dpl\dpl}\not\leq E^{\dpl\dpl}$. We answer these questions as well in Section \ref{Double Jumps of Ceers}:

\begin{thm}
	For every co-infinite c.e.\ set $A$, $\Id^{\dpl\dpl}\leq E_A^{\dpl\dpl}$. Yet there are infinite ceers $E$ so that $\Id^{\dpl\dpl}\not\leq E^{\dpl\dpl}$.
	
	In fact, every low dark minimal ceer satisfies $\Id^{\dpl\hat{k}}\not\leq E^{\dpl\hat{k}}$ (where $E^{\dpl \hat{k}}$ is the $k$th iterate of the $\dpl$-jump of $E$), yet there are dark minimal ceers $E$ so that $\Id^{\dpl\dpl}\leq E^{\dpl\dpl}$.
\end{thm}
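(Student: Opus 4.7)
The statement splits into three assertions: for every co-infinite c.e.\ $A$, $\Id^{\dpl\dpl}\leq E_A^{\dpl\dpl}$; every low dark minimal ceer $E$ satisfies $\Id^{\dpl\hat{k}}\not\leq E^{\dpl\hat{k}}$ for all $k$; and some dark minimal ceers $E$ satisfy $\Id^{\dpl\dpl}\leq E^{\dpl\dpl}$.

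For the first assertion, my plan is to construct the reduction $f:\Id^{\dpl\dpl}\leq E_A^{\dpl\dpl}$ directly, bypassing a reduction $\Id^\dpl\leq E_A^\dpl$ (which fails when $A$ is hyperhypersimple, by the preceding theorem). Given a $\Id^{\dpl\dpl}$-index $x$, I will set $W_{f(x)}=\{h(i,n):i\in W_x,\,n\in\omega\}$ for a suitable computable $h$, so that the collection $\{[W_{h(i,n)}]_{E_A}:n\in\omega\}$ uniquely determines each $W_i$ as a c.e.\ set. Each bit ``$m\in W_i$'' will be recorded redundantly across many auxiliary c.e.\ sets of the form $\{m,m+k\}$ for varying $k$; since $\overline{A}$ is infinite, infinitely many such pairs survive the $E_A$-closure as $\{m,m+k\}$ rather than collapsing into $A$, and the bit is recoverable from the resulting collection of closed sets. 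The hardest step will be verifying that distinct $W_i$ yield distinct collections even when $A$ is hyperhypersimple, where no c.e.\ selector into $\overline{A}$ exists.

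For the second assertion, I induct on $k$. Lemma~\ref{dark minimal intersects all classes} implies that for dark minimal $E$ the $E$-closed c.e.\ sets are exactly the finite unions of $E$-classes and $\omega$ itself. Suppose $f$ computably reduces $\Id^\dpl$ to $E^\dpl$: any $W_{f(x)}$ enumerating infinitely many $E$-classes has $[W_{f(x)}]_E=\omega$, so distinctness under $f$ forces finite $E$-closures for almost all $x$. Lowness of $E$ lets a $\emptyset'$ oracle decide equality of such finite closures, enabling a diagonalization: construct $W_x$ and $W_y$ whose $f$-images are forced (by $\emptyset'$-guessing) to have equal $E$-closure, and arrange $W_x\ne W_y$. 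For the inductive step, I verify that the ``finite-or-universal'' dichotomy persists in each iterate $E^{\dpl\hat{k}}$ of a low dark minimal ceer, so the same strategy adapts; the crucial point is that lowness makes the required equality decision available at $\emptyset'$ at every level.

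For the third assertion, part two forbids the combination ``dark minimal and low,'' so the construction must produce a dark minimal $E$ that is sufficiently non-low. I will use a priority argument that enforces the usual dark-minimality requirements while coding enough complexity (e.g., at a $\Pi^0_2$ level) into $E$ so that $E^{\dpl\dpl}$ absorbs $\Id^{\dpl\dpl}$. The intuition is that the second jump will unfold the coded complexity into a sufficiently rich structure, even though the first jump alone cannot (as darkness still prevents $\Id\leq E^\dpl$). The main tension to resolve is between darkness — no uniform selection of inequivalent elements — and the coding that must deliver enough classes after the double-jump; a permitting-style argument seems the right tool for harmonizing the two.
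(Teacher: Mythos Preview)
Your proposal for the second assertion contains a genuine gap. You claim that the ``finite-or-universal'' dichotomy persists in each iterate $E^{\dpl\hat{k}}$ and that lowness keeps the relevant equality decisions at the $\emptyset'$ level throughout. Neither holds. The dichotomy of Lemma~\ref{dark minimal intersects all classes} is a feature of dark minimal ceers, but $E^{\dpl}$ is never dark minimal: for any infinite ceer $E$ one has $\Id\leq E^{\dpl}$, so $E^{\dpl}$ is light, and the argument does not iterate. Likewise, lowness of $E$ says only that $E'\leq_T\emptyset'$; it gives you no control over questions about $E^{\dpl\hat{k}}$ for $k\geq 2$, whose complexity climbs with $k$. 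The paper proceeds entirely differently: it computes that for dark minimal $E$ the relation $E^{\dpl\dpl}$ is $\Delta^0_4(E)$ (using the dichotomy once, at the bottom level), hence $E^{\dpl\hat{k}}$ is $\Pi^0_{2k-1}(E)$ for $k>2$, which under lowness is $\Pi^0_{2k-1}$; then a separate result (Corollary~\ref{Jumps of Id aren't simple}) shows $\Id^{\dpl\hat{k}}$ is not $\Pi^0_{2k-1}$, so no reduction exists. This is a complexity-class obstruction, not a diagonalization.

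For the first assertion your sketch is underspecified at the key point you yourself flag: with pairs $\{m,m+k\}$ you have not said how the resulting family of $E_A$-closures determines $W_i$, and when $A$ is hyperhypersimple there is no reason different $W_i$ should yield different families. The paper's construction avoids this entirely: it uses the $\Delta^0_2$ principal function $F$ of $\overline{A}$ and encodes ``$W_k$ is represented in $\mathcal{V}_e$'' by the presence of the single set $\omega\smallsetminus\{F(k)\}$ in the image family (together with $\omega$ and all finite sets as padding), realized via an activate/deactivate mechanism on auxiliary sets $V^m_{i,k}$ tracking the approximation $F_s$. For the third assertion your outline (``priority argument, code $\Pi^0_2$ information, permitting'') is in the right spirit but misses the concrete device: the paper builds, alongside the dark minimal $E$, auxiliary sets $U^n_{j,k,\bar{x}}$ whose $E$-quotient has \emph{even} size exactly when $W_j=W_k$ and $\bar{x}$ is $E$-distinct of length $2k$, and odd size or is $\omega$ otherwise; the reduction then reads off membership of $W_k$ in a family via this parity code.
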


Next, every infinite ceer $E$  has the property that $\Id\leq E^\dpl$~\cite[Theorem 4.2]{CCK}, but there are infinite $\Delta^0_4$ equivalence relations $E$ so that $\Id\not\leq E^\dpl$~\cite[Theorem 4.4]{CCK}. Clemens, Coskey, and Krakoff ask \cite[Question 6]{CCK} what is the least complexity of an infinite equivalence relation $E$ so that $\Id\not\leq E^\dpl$. In Section \ref{section: dark jumps}, we answer this with the following theorem:

\begin{thm}
	Every infinite $\Pi^0_2$ equivalence relation $E$ satisfies $\Id\leq E^\dpl$, but there are infinite $\Sigma^0_2$ equivalence relations $E$ so that $\Id\not\leq E^\dpl$.
\end{thm}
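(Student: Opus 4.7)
\emph{Positive direction.} The plan is to exploit the structure of $\Pi^0_2$ equivalence relations as decreasing intersections of ceers: write $E = \bigcap_s R_s$ with $R_s$ uniformly c.e.\ ceers and $R_{s+1} \subseteq R_s$ (obtained from the $\Pi^0_2$ definition by closing the stage-$s$ c.e.\ relation under reflexivity, symmetry and transitivity). Since $E$ is infinite and $R_s$-inequivalences persist to $R_{s'}$ for $s' \geq s$, the number of $R_s$-classes tends to infinity with $s$, and the set $M = \{x : x = \min[x]_E\}$ of canonical $E$-representatives is the monotone union $\bigcup_s M_s$ with $M_s = \{x : x = \min [x]_{R_s}\}$. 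Enumerate $M = \{a_0 < a_1 < \cdots\}$. To build $f$, let $W_{f(n)}$ enumerate, for each pair of stages $(s,t)$, the $(n+1)$-st smallest element of the computable approximation $M_s^t = \{x : \forall y < x, (y,x) \notin R_s^t\}$; these eventually stabilize to $a_n$. The key combinatorial claim is that the ``index set'' $J_n = \{k : [a_k]_E \subseteq [W_{f(n)}]_E\}$ always contains $n$ (by stabilization) but, for $m \neq n$, the two sets $J_n$ and $J_m$ differ, giving $[W_{f(n)}]_E \neq [W_{f(m)}]_E$. The main obstacle will be to control transient enumerations into $W_{f(n)}$ so that they only contribute classes $[a_k]_E$ with $k$ bounded in terms of $n$; this is managed by a careful analysis of the monotone approximations, exploiting $R_{s+1} \subseteq R_s$ and the fact that the true separator $a_n$ cannot appear as a transient value in $W_{f(m)}$ for $m < n$ without violating the size constraints on $M_s^t \cap [0, a_n]$.

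\emph{Counterexample.} We construct an infinite $\Sigma^0_2$ equivalence relation $E$ by a $\emptyset'$-priority argument, building $E$ as a c.e.\ set in $\emptyset'$. The requirements are $N_k$ (preserve at least $k$ $E$-classes, via a reserve of ``fresh'' pairwise-inequivalent representatives $a_0, a_1, \ldots$) and $R_e$ (defeat $\varphi_e$ as a reduction $\Id \to E^\dpl$, i.e., find $n \neq m$ with $[W_{\varphi_e(n)}]_E = [W_{\varphi_e(m)}]_E$). To satisfy $R_e$, we use the $\emptyset'$ oracle to select indices $n_e \neq m_e$ such that the c.e.\ sets $W_{\varphi_e(n_e)}, W_{\varphi_e(m_e)}$ avoid the reserved $a_i$'s; we then force $[W_{\varphi_e(n_e)}]_E = [W_{\varphi_e(m_e)}]_E$ by, at each stage, adding an $E$-equivalence between each newly-enumerated element of one side and some existing element of the other side. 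Because all additions to $E$ are enumerated with a $\emptyset'$ oracle, the final $E$ is $\Sigma^0_2$. The main obstacle is that the $R_e$ merges must not swallow reserved representatives or conflict with higher-priority requirements; this is handled by a standard priority ordering, together with the $\emptyset'$ oracle's ability to recognize when a tentative choice of $(n_e, m_e)$ would force a destructive merge and to re-choose a new pair of indices on the fly.
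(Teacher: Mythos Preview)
Your setup has a genuine gap: not every infinite $\Pi^0_2$ equivalence relation is a decreasing intersection of ceers, and closing the stage-$s$ c.e.\ approximations under transitivity can strictly enlarge the intersection. For a concrete obstruction, take $A$ properly $\Delta^0_2$ and let $E$ have classes $\{2n:n\in A\}$, $\{2n:n\notin A\}$, and $\{2k+1\}$ for each $k$. This $E$ is infinite and $\Delta^0_2$; but if $E=\bigcap_s R_s$ with $R_s$ decreasing ceers, then once some $R_{s_0}$ separates $2a$ from $2b$ (for $a\in A$, $b\notin A$) we get $\{2n:n\in A\}=[2a]_{R_{s_0}}\cap\{2n:n\in\omega\}$, which is c.e., forcing $A$ computable. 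So the foundation of your plan collapses, and the downstream control of the ``transient enumerations'' (already only sketched) has nothing to stand on. The paper's route is different and short: it uses the raw $\Pi^0_2$ approximation $E_s$ (with $x\mathrel{E}y$ iff $x\mathrel{E_s}y$ for infinitely many $s$; no transitivity imposed) and directly builds a $\subsetneq_E$-increasing chain of finite initial segments $W_{i_0},W_{i_1},\ldots$ via the one-line recursion $x\in W_{i_{j+1}}\Leftrightarrow(\forall y<x)(\exists s\ge x)(\exists z\in W_{i_j})(y\mathrel{E_s}z)$.

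\textbf{Part 2 ($\Sigma^0_2$ counterexample).} Your framework (a $\mathbf{0}'$-c.e.\ construction with infinity requirements and diagonalization requirements) matches the paper's, but the $R_e$ strategy as written does not work. You cannot in general choose $n_e\neq m_e$ with $W_{\phi_e(n_e)},W_{\phi_e(m_e)}$ avoiding the reserved representatives, since a putative reduction could send every $n$ to a set containing $a_0$; and even if the initial enumerations avoided them, later elements of $W_{\phi_e(n_e)}$ might be reserved, at which point your merge rule would destroy a protected class. The paper's fix is a pigeonhole step: among $3\cdot 2^{\lvert\bar x\rvert}+1$ indices in the range, use $\mathbf{0}'$ to find four that \emph{agree on which reserved classes they intersect}; then either two are already contained in the reserved classes (done), or three are not, and among those you get either two infinite sets (run a \texttt{Collapse} module equalizing their $E$-closures above the restraint) or two finite sets (collapse a single interval above the restraint). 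Your sketch lacks both the agreement-on-restraint idea and the finite/infinite case split, and the appeal to $\mathbf{0}'$ ``re-choosing on the fly'' does not substitute for them.
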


Clemens, Coskey, and Krakoff \cite{CCK} also examine the transfinite jump hierarchy, which they defined as follows:

\begin{defn}
	For $a\in \O$ and $E$ an equivalence relation, $E^{\dpl a}$ is defined by induction as follows:
	
	If $a=1$ (the notation for 0), then $E^{\dpl a}=E$.
	
	If $a=2^b$ then $E^{\dpl a} = (E^{\dpl b})^\dpl$.
	
	If $a=3\cdot 5^e$, then $E^{\dpl a} = \oplus_{i} E^{\dpl \phi_e(i)}$
\end{defn}

To avoid confusion with notations in $\mathcal{O}$, we use the following definition for finite iterates of the jump:

\begin{defn}
	For $n\in \omega$ and $E$ an equivalence relation, we let $E^{\dpl \hat{n}}$ be the $n$th iterate of the jump over $E$.
\end{defn}

 Clemens, Coskey, and Krakoff show \cite[Theorem 3.1]{CCK} that no jump fixed-point can be hyperarithmetic (HYP). In fact, they show that if $E$ is a jump fixed point and $X$ is a HYP set, then $X\leq_m E$ \cite[Theorem 3.10]{CCK}. They ask if notations matter in the definition of the jump \cite[Question 2]{CCK} and if every jump fixed point must be an upper bound under computable reduction (not just $m$-reduction) for all HYP equivalence relations \cite[Question 3]{CCK}. We answer both in  the affirmative in Section \ref{section: jumps depend on notations} and \ref{section: every HYP ER reduces to some Id^+a}:

\begin{thm}
	There are two notations $a,b\in \O$ with $\abs{a}=\abs{b}=\omega^2$ so that $\Id^{\dpl a}$ and $\Id^{\dpl b}$ are incomparable. 
\end{thm}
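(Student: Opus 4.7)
The plan is to build $a$ and $b$ by a priority argument. Using the recursion theorem, fix indices $e_a,e_b$ in advance so that $a=3\cdot 5^{e_a}$ and $b=3\cdot 5^{e_b}$, and build the total computable functions $\phi_{e_a},\phi_{e_b}$ in stages. Each committed value will be a notation in $\O$ for some ordinal $<\omega^2$; a padding action at each stage commits fresh positions to notations for ordinals $\omega\cdot t$, ensuring cofinality, so that $\abs{a}=\abs{b}=\omega^2$.

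For each $s\in\omega$, impose the requirements
\[
R_s^{\rightarrow}\colon\ \phi_s\text{ is not a reduction }\Id^{\dpl a}\le \Id^{\dpl b},
\qquad
R_s^{\leftarrow}\colon\ \phi_s\text{ is not a reduction }\Id^{\dpl b}\le \Id^{\dpl a},
\]
ordered by a standard priority, and assign each requirement a target level $\alpha_s=\omega\cdot n_s$ with $(n_s)$ strictly increasing in priority rank so $\sup_s n_s=\omega$. The strategy for $R_s^{\rightarrow}$ at a witness position $p$ is: commit $\phi_{e_a}(p)$ to a notation for $\alpha_s+1$, then monitor the c.e.\ set $J_p=\{j:\phi_s(\langle p,y\rangle)$ has first coordinate $j$ for some $y\}$; whenever a new $j\in J_p$ appears and $\phi_{e_b}(j)$ is uncommitted, commit it to a notation for an ordinal $\le\alpha_s$. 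If this is achieved then $\oplus_{j\in J_p}\Id^{\dpl\phi_{e_b}(j)}\le \Id^{\dpl\alpha_s}$ (since each component reduces uniformly to $\Id^{\dpl\alpha_s}$, and $\omega$ copies of the latter collapse back to it), while $\Id^{\dpl\phi_{e_a}(p)}=(\Id^{\dpl\alpha_s})^{\dpl}\not\le \Id^{\dpl\alpha_s}$ by strictness of the jump. Hence $\phi_s$ cannot reduce the $p$-th component of $\Id^{\dpl a}$ into the corresponding c.e.\ subrelation of $\Id^{\dpl b}$, so it fails as a reduction. The strategy for $R_s^{\leftarrow}$ is symmetric.

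The main obstacle is the interaction between opposite-direction requirements, since both direct commitments to the same positions of $\phi_{e_a}$ and $\phi_{e_b}$. The strictly increasing choice of $\alpha_s$ keeps interference one-sided at the level of ordinal targets: every higher-priority $R_{s'}^{\leftarrow}$ only commits a single $b$-position $q_{s'}$ to a notation for $\alpha_{s'}+1<\alpha_s$, so if $q_{s'}$ happens to enter $J_p$ it still contributes an ordinal bounded by $\alpha_s$ and does not hurt $R_s^{\rightarrow}$. The genuine conflict is over $a$-positions: if some higher-priority $R_{s'}^{\leftarrow}$ needs $\phi_{e_a}(p)$ bounded while $R_s^{\rightarrow}$ wants it large, the higher-priority requirement wins and the loser moves to a fresh witness; since each requirement acts only finitely often, each is eventually satisfied by a stable witness. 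Padding uses a supply of positions reserved apart from all witness allocations, which maintains cofinality in both $a$ and $b$. The most delicate verification step is confirming that new witnesses always exist disjoint from the (finitely many) positions restricted by higher-priority requirements, which follows from the fact that such restrictions involve only finitely many positions at any stage together with the freshness of the witness pool.
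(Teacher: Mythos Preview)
Your argument has a genuine gap in the core strategy for $R_s^{\rightarrow}$. The set $J_p=\{j:\phi_s(\langle p,y\rangle)\text{ has first coordinate }j\text{ for some }y\}$ is a c.e.\ set determined entirely by the adversary $\phi_s$; you have no control over which columns of $b$ it will eventually contain. Your strategy only forces $\phi_{e_b}(j)$ to a small value when $j$ enters $J_p$ \emph{and} $\phi_{e_b}(j)$ is still uncommitted at that moment. But in order to make $\phi_{e_b}$ total with $\abs{b}=\omega^2$ you must, independently of $R_s^{\rightarrow}$, commit infinitely many $b$-positions to ordinals cofinal in $\omega^2$ (your padding), and lower-priority $R_{s'}^{\leftarrow}$ commits its witness $q$ to $\alpha_{s'}+1>\alpha_s$. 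Any of these already-committed positions can later appear in $J_p$, since $\phi_s$ is arbitrary. When that happens your key inequality $\bigoplus_{j\in J_p}\Id^{\dpl\phi_{e_b}(j)}\le \Id^{\dpl\alpha_s}$ fails, and you cannot repair it: the commitment at $j$ is permanent, and moving $p$ to a fresh witness does not help because the same thing can recur for every choice of $p$ (leading to infinite injury of $R_s^{\rightarrow}$ by lower-priority activity). Your analysis only treats interference from \emph{higher}-priority requirements on $b$-positions, which is the easy direction.

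The paper's proof sidesteps this entirely via a structural lemma you are missing: for any $E$, the classes of $E^{\dpl}$ are computably inseparable. The paper arranges every column ordinal of $a$ to be a successor, so column $p$ of $\Id^{\dpl a}$ has the form $F^{\dpl}$; since the columns of $\Id^{\dpl b}$ form a computable partition, inseparability forces any reduction to send the entire $p$th column of $a$ into a \emph{single} column $m$ of $b$. Hence one need only inspect where $\phi_s(\langle p,0\rangle)$ lands, and then perform one finite boost making the ordinal of column $p$ of $a$ exceed that of column $m$ of $b$ (invoking the notation-independence below $\omega^2$ and properness of the jump for the contradiction). Tracking one target column rather than an uncontrolled c.e.\ family eliminates exactly the interaction problem that breaks your construction, and makes the bidirectional version a routine finite-injury argument.
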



	On the other hand, if $\abs{a}=\abs{b}$, then $\Id^{\dpl a}$ and $\Id^{\dpl b}$ are somewhat related as follows:
	
	\begin{thm}
		For every computable ordinal $\alpha$, there is an equivalence relation $E$ which is $\Pi^0_{2\cdot \alpha+1}$ so that whenever $a\in \mathcal{O}$ is a notation for $\alpha$, we have $\Id^{\dpl a}\leq E$.
	\end{thm}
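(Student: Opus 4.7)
The plan is to construct $E$ so that it has, for each notation $b$ with $|b| \le \alpha$, a copy of $\Id^{\dpl b}$ as a ``column''. Set $\mathcal{O}_{\le\alpha} := \{c \in \omega : c \in \mathcal{O} \text{ and } |c| \le \alpha\}$, identify $\omega$ with $\omega \times \omega$ via a computable bijection, and define
\[
(b, x) \mathrel{E} (b', y) \iff b = b' \; \wedge \; \bigl(x = y \; \vee \; (b \in \mathcal{O}_{\le\alpha} \wedge x \mathrel{\Id^{\dpl b}} y)\bigr).
\]
Column $b$ of $E$ is then $\Id^{\dpl b}$ when $b \in \mathcal{O}_{\le\alpha}$ (the $x = y$ disjunct is absorbed by reflexivity of $\Id^{\dpl b}$) and is the identity otherwise, so $E$ is an equivalence relation. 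Given any notation $a$ for $\alpha$, the map $x \mapsto (a, x)$ is a computable reduction of $\Id^{\dpl a}$ to $E$, since $a \in \mathcal{O}_{\le\alpha}$ and column $a$ of $E$ is exactly $\Id^{\dpl a}$.

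The remaining work is to check that $E \in \Pi^0_{2\alpha+1}$. By transfinite induction on $\alpha$ I would establish two sub-claims. First, the ternary relation $x \mathrel{\Id^{\dpl b}} y$, as a function of $(b, x, y)$ restricted to $b \in \mathcal{O}_{\le\alpha}$, is uniformly $\Pi^0_{2\alpha+1}$: the base $b = 1$ gives $\Id \in \Pi^0_1$; the successor case $b = 2^c$ uses that the $\dpl$-jump of a $\Pi^0_\gamma$ equivalence relation is $\Pi^0_{\gamma+2}$; the limit case $b = 3 \cdot 5^e$ uses that a uniform join of uniformly $\Pi^0_{\gamma_i}$ relations with $\sup_i \gamma_i$ a limit ordinal stays at $\Pi^0_{\sup_i \gamma_i}$. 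Second, the predicate $\mathcal{O}_{\le\alpha}$ itself lies in $\Pi^0_{2\alpha+1}$. Once both are in hand, the inner disjunction in the definition of $E$ is the union $\Pi^0_0 \cup \Pi^0_{2\alpha+1} = \Pi^0_{2\alpha+1}$, and the outer conjunction with $b = b'$ is harmless. Crucially, using $x = y$ as a fallback disjunct (rather than $\neg(b \in \mathcal{O}_{\le\alpha}) \wedge x = y$) avoids a mixed $\Pi/\Sigma$ disjunction that would otherwise have pushed the bound up to $\Delta^0_{2\alpha+2}$.

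The main obstacle is pinning down the complexity of $\mathcal{O}_{\le\alpha}$. A priori, $\mathcal{O}$ is $\Pi^1_1$-complete, so one must verify that bounding the ordinal rank by $\alpha$ genuinely collapses the complexity into the arithmetic level $2\alpha+1$. This proceeds by unfolding the definition of $\mathcal{O}$ recursively on $\alpha$: successor steps preserve the complexity (we only add $b = 2^c$ for $c$ in the previous filter), while limit steps contribute a single $\Pi^0_2$ layer (for ``$\phi_e$ is total'') together with a universal quantifier $\forall i (\phi_e(i) \in \mathcal{O}_{<\alpha})$. A careful transfinite-induction accounting shows that the resulting complexity grows much more slowly than $2\alpha+1$, so the bound holds with ample room, and the construction goes through.
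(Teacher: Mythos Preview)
Your construction is quite different from the paper's. The paper does not attempt to assemble all notations for $\alpha$ into a single object; instead it first proves, by transfinite induction on the notation $a$, that $\Id^{\dpl a}\leq {=_{\Sigma^0_{2\cdot_{\mathcal O}a}}}$ (equality of $\Sigma^0_{2\cdot_{\mathcal O}a}$-indexed sets), and then invokes Spector's uniqueness theorem to observe that for any two notations $a,b$ with $|a|=|b|$ one can uniformly convert $\Sigma^0_{2\cdot_{\mathcal O}a}$-indices to $\Sigma^0_{2\cdot_{\mathcal O}b}$-indices. Thus a single fixed $E={=_{\Sigma^0_{2\cdot_{\mathcal O}e}}}$ (for one chosen notation $e$ for $\alpha$) works, and its $\Pi^0_{2\alpha+1}$ bound is immediate.

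Your route---taking $E$ to be the disjoint union of $\Id^{\dpl b}$ over all $b\in\mathcal O_{\le\alpha}$---is conceptually simple for the reduction half, but pushes all the difficulty into the complexity bound. The two sub-claims you isolate are exactly the right ones, but neither is as routine as your sketch suggests. For the first, the issue at the limit step is uniformity: the inductive hypothesis gives, for each $\beta<\alpha$, a $\Pi^0_{2\beta+1}$ formula for the ternary relation restricted to $\mathcal O_{\le\beta}$, but gluing these into a single $\Pi^0_{2\alpha+1}$ formula valid for \emph{all} $b$ with $|b|\le\alpha$ requires comparing $H$-sets across incomparable notations---which is precisely Spector's uniqueness theorem. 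For the second, the assertion that $\mathcal O_{\le\alpha}$ sits comfortably inside $\Pi^0_{2\alpha+1}$ is true but again leans on the same Spector-style machinery (one shows $\mathcal O_{\le\alpha}$ is decidable from some fixed $H(a_0)$ with $|a_0|$ slightly above $\alpha$, uniformly). So your argument is not wrong, but when made precise it would end up invoking the same key ingredient the paper uses directly; the paper's approach packages this more transparently and yields a canonical target $E$ rather than an ad hoc one.
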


\begin{thm}
	For every HYP equivalence relation $E$, there is a notation $a\in \O$ so that $E\leq \Id^{\dpl a}$. In particular, if $E$ is a fixed point of the jump, i.e., $E\equiv E^\dpl$ then $E$ is an upper bound for every HYP equivalence relation.
\end{thm}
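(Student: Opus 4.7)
The plan is to prove the main statement by transfinite induction on the hyperarithmetic complexity of $E$, and then derive the ``in particular'' clause by a separate induction on notations.  Let $\mathcal{C} = \{E : (\exists a \in \O)\, E \leq \Id^{\dpl a}\}$; the main statement says that $\mathcal{C}$ contains every HYP equivalence relation.

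I would first establish three closure properties of $\mathcal{C}$: (i) every c.e.\ equivalence relation lies in $\mathcal{C}$ via $f(x) = $ a c.e.\ index for the class $[x]_E$; (ii) $\mathcal{C}$ is closed under the $\dpl$-jump, since a reduction $f : E \leq \Id^{\dpl a}$ lifts to $E^\dpl \leq \Id^{\dpl 2^a}$ via the map $x \mapsto$ an index for the c.e.\ set $\{f(z) : z \in W_x\}$; and (iii) $\mathcal{C}$ is closed under countable uniformly-indexed joins, since reductions $f_n : E_n \leq \Id^{\dpl a_n}$ uniform in $n$ (with $n \mapsto a_n$ computable) give $\oplus_n E_n \leq \Id^{\dpl b}$ for $b = 3 \cdot 5^e$ with $\phi_e(n) = a_n$.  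Combining (ii) and (iii), the pointwise product $\prod_n E_n$ (with $(x_n) \sim (y_n)$ iff $\forall n\, x_n E_n y_n$) reduces to $(\oplus_n E_n)^\dpl$ via $(x_n)_n \mapsto$ an index for the c.e.\ set $\{\langle n, f_n(x_n)\rangle : n \in \omega\}$, since taking the $(\oplus_n E_n)$-closure of this set recovers the tuple of $E_n$-classes of the $f_n(x_n)$.

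The induction then proceeds as follows: every $\Pi^0_\alpha$ equivalence relation lies in $\mathcal{C}$.  The base case $\alpha \leq 1$ is immediate from (i).  For the inductive step at a $\Pi^0_\alpha$ equivalence relation $E$, I would decompose $E = \bigcap_n F_n$ as a uniformly computable intersection of equivalence relations $F_n \supseteq E$ of strictly smaller rank; by induction $F_n \in \mathcal{C}$ uniformly, so the pointwise-product construction places $E$ in $\mathcal{C}$.  The principal obstacle is producing this decomposition: the naive choice where $F_n$ is the equivalence closure of the $n$-th $\Sigma^0_{<\alpha}$-slice of $E$ need not satisfy $\bigcap_n F_n = E$, because the equivalence closures can introduce spurious pairs that persist across all $n$.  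The $F_n$ must be refined so that every pair $(x,y) \not\in E$ is certified as inequivalent by some $F_n$, for instance by separating pairs using a lower-complexity witness to $\neg(x E y)$ as such a witness becomes available---the computable analogue of the approximation theorem underpinning the Hjorth--Kechris--Louveau results on Borel reducibility to the Friedman--Stanley tower.

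For the ``in particular'' clause, if $E \equiv E^\dpl$ then $E$ is necessarily infinite (for any finite $E$ with $k$ classes, $E^\dpl$ has strictly more classes, ruling out bi-reducibility), hence $\Id \leq E^\dpl = E$ by \cite[Theorem 4.2]{CCK}.  A transfinite induction on $a \in \O$ then yields $\Id^{\dpl a} \leq E$ for every $a$: the successor step uses $\Id^{\dpl 2^a} = (\Id^{\dpl a})^\dpl \leq E^\dpl \equiv E$; and the limit step uses $\oplus_i \Id^{\dpl \phi_e(i)} \leq \oplus_i E \leq E$, where the last inequality follows for any fixed point $E$ from standard pair-coding in $E^\dpl$ using a computable sequence of distinct $E$-classes supplied by $\Id \leq E$.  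Combined with the main part of the theorem, every HYP equivalence relation $F$ satisfies $F \leq \Id^{\dpl a}$ for some $a$, and hence $F \leq E$.
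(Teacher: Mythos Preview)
Your overall plan for the ``in particular'' clause is essentially right and matches the paper's brief argument (which simply cites \cite[Propositions 2.3 and 2.7]{CCK} to obtain $\Id^{\dpl a}\leq E$ for every $a$), though your citation of \cite[Theorem 4.2]{CCK} for $\Id\leq E^\dpl$ is misplaced: that result is stated for ceers, and a jump fixed point is never a ceer.  This step can be repaired, but as written it is a gap.

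The real problem is in the main claim.  Your inductive step requires writing an arbitrary $\Pi^0_\alpha$ equivalence relation $E$ as $\bigcap_n F_n$ with each $F_n\supseteq E$ an equivalence relation of strictly smaller rank, uniformly in $n$.  You correctly flag this as ``the principal obstacle'' but do not actually carry it out; the appeal to a ``computable analogue'' of Hjorth--Kechris--Louveau is not a proof, and in fact the naive decompositions fail for reasons you yourself identify.  Even granting existence, you would also need the reductions $F_n\leq \Id^{\dpl a_n}$ to be uniform in $n$ with a computable and $<_{\O}$-increasing sequence $(a_n)$, which your closure property (iii) tacitly assumes.  Since this decomposition is exactly the content of the theorem, leaving it as a gesture means the proposal is a plan, not a proof.

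The paper avoids this obstacle entirely by changing the inductive invariant.  Rather than tracking which \emph{equivalence relations} reduce to some $\Id^{\dpl a}$, it tracks which \emph{sets} admit a ``strong subset reduction'' to some $\Id^{\dpl a}$: a computable $h$ together with indices $i,j$ with $W_i\subseteq_E W_j$ such that $h(x)\mathrel{E^\dpl} j$ for $x\in A$ and $h(x)\mathrel{E^\dpl} i$ for $x\notin A$.  This class of sets is closed under complement (at the cost of one extra jump) and under effective unions (again one extra jump), which is all one needs to climb the hyperarithmetic hierarchy at the level of sets.  Only at the very end does a single lemma convert a strong subset reduction of (the graph of) $R$ into a genuine computable reduction $R\leq \Id^{\dpl a}$, by a column-coding trick using $\Id^{\dpl a}\times \Id\leq \Id^{\dpl a}$.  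The point is that sets decompose trivially along the hyperarithmetic hierarchy while equivalence relations do not; the paper's key lemma lets one postpone the passage to equivalence relations until after the induction is complete.
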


\section{Ceers which are light for the jump}\label{section 2: ceers which are light for the jump}
In this section, we examine which ceers $E$ are light for jump, i.e., $\Id^\dpl \leq E^\dpl$. We begin by introducing a purely combinatorial notion which will capture a ceer being light for the jump.

\begin{defn}
	A ceer $E$ is \emph{\SLFTJ} if there is a uniformly c.e.\ sequence $(V_i)_{i\in \omega}$ so that, $V_i\not\subseteq_E \bigcup_{j\neq i} V_j$ for every $i\in \omega$. That is, there is an $x\in V_i$ so that $[x]_E\cap V_j=\emptyset$ for every $j\neq i$.

\end{defn} 

This definition naturally captures a ceer being light for the jump in a way given by a map from $\omega$ into c.e.\ sets.

\begin{lem}\label{Motivating SLFTJ}
	Fix a ceer $E$. Then, $E$ is \SLFTJ if and only if there exists a function $f$ so that the map $g$ which sends $i$ to an index for $\bigcup_{j\in W_i}W_{f(j)}$ gives a reduction of $\Id^\dpl$ to $E^\dpl$. 
\end{lem}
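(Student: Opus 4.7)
The plan is to prove the two directions of the equivalence by exhibiting the obvious translation between the witnessing sequence $(V_j)$ for \SLFTJ and the family $(W_{f(j)})$. The correspondence is $V_j = W_{f(j)}$, and the remaining work is to check that, under this identification, distinctness of indices $W_i,W_j$ corresponds precisely to distinctness of $E$-closures.

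For the forward direction, suppose $(V_j)_{j\in\omega}$ witnesses \SLFTJ. By the s-m-n theorem, pick $f$ computable with $W_{f(j)}=V_j$. Then $g(i)$, defined as an index for the uniformly c.e.\ set $\bigcup_{j\in W_i}V_j$, is computable in $i$. Clearly $W_i=W_j$ yields the same union and hence the same $E$-closure, so the ``forward'' implication of the reduction is automatic. For the non-trivial implication, suppose $W_i\neq W_j$ and (without loss of generality) pick $k\in W_i\setminus W_j$. By \SLFTJ, there exists $x\in V_k$ with $[x]_E\cap V_\ell=\emptyset$ for every $\ell\neq k$, in particular for every $\ell\in W_j$. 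Then $x\in \bigcup_{\ell\in W_i}V_\ell$ but $x\notin\bigl[\bigcup_{\ell\in W_j}V_\ell\bigr]_E$, so the $E$-closures differ, as required.

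For the backward direction, suppose $f$ exists as in the statement and set $V_j:=W_{f(j)}$; these are uniformly c.e.\ in $j$. Assume for contradiction that $(V_j)$ fails to witness \SLFTJ: there is some $i$ with $V_i\subseteq_E \bigcup_{j\neq i}V_j$. Choose c.e.\ indices $n,m$ with $W_n=\omega$ and $W_m=\omega\setminus\{i\}$; then $W_n\neq W_m$, so by hypothesis $[W_{g(n)}]_E\neq [W_{g(m)}]_E$. But
\[
\Bigl[\bigcup_{j\in W_n}V_j\Bigr]_E = \Bigl[\bigcup_{j}V_j\Bigr]_E = \Bigl[\bigcup_{j\neq i}V_j\Bigr]_E = \Bigl[\bigcup_{j\in W_m}V_j\Bigr]_E,
\]
where the middle equality uses $V_i\subseteq_E \bigcup_{j\neq i}V_j$. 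This contradicts that $g$ is a reduction, completing the proof.

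There isn't really a main obstacle here: the lemma is essentially a translation between the combinatorial definition of \SLFTJ and a particular canonical shape of reduction $\Id^\dpl\leq E^\dpl$. The only care needed is in the backward direction, where one must exhibit a pair of distinct c.e.\ sets $W_n,W_m$ whose induced $E$-closed unions coincide assuming \SLFTJ fails; taking $\omega$ and $\omega\setminus\{i\}$ suffices.
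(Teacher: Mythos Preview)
Your proof is correct and follows essentially the same approach as the paper's own proof: in the forward direction you set $W_{f(j)}=V_j$ and verify the reduction directly, and in the backward direction you use the same pair of witnesses $\omega$ and $\omega\setminus\{i\}$ to derive a contradiction from a failure of the \SLFTJ condition. Your write-up is slightly more detailed (you spell out the case analysis in the forward direction and explicitly invoke s-m-n for computability of $f$), but the argument is the same.
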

\begin{proof}
	Suppose first that $E$ is \SLFTJ. We let $W_{f(j)}=V_j$. Since $V_j$ contains an element whose $E$-class is not intersected by any $V_k$ with $k\neq j$, the image  $\bigcup_{j\in W_i}W_{f(j)}$ of a c.e.\ set $W_i$ determines whether $j\in W_i$. Thus, this gives a reduction of $\Id^\dpl$ to $E^\dpl$.
	
	Next, suppose that there is a function $f$ as given. If every element of $W_{f(j)}$ were to be $E$-equivalent to a member of $W_{f(k)}$ for some $k\neq j$, then the $g$-image of $\omega$ and $\omega \smallsetminus \{j\}$ would be the same, so $g$ would not be a reduction of $\Id^\dpl$ to $E^\dpl$. Thus the family $V_j=W_{f(j)}$ shows that $E$ is \SLFTJ.
\end{proof}

More surprisingly, we show that any ceer which is light for the jump is \SLFTJ. Before this, let us establish a useful lemma that constrains the behavior of any reduction  from $\Id^{\dpl}$ to some $E^{\dpl}$.

	\begin{lem}\label{properties of h for jumps}
		Let $h: \Id^{\dpl} \leq E^{\dpl}$, for a ceer $E$. The following hold:
		\begin{enumerate}
			\item\label{h respects inclusion} if $W_i\subseteq W_j$, then $W_{h(i)}\subseteq_E W_{h(j)}$;
			\item \label{lem:Light For The Jump Is Finite Basedd} if $W_i$ is  infinite, then $ W_{h(i)}=_E\bigcup_{W_a\subset W_i \text{  finite}} W_{h(a)}.$
		\end{enumerate}
	\end{lem}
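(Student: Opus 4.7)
The plan is to handle both parts via a single template: assume the contrary and use the Recursion Theorem to build a c.e. set $W_m$ whose enumeration depends on what $W_{h(m)}$ does, then show that every possible identity of $W_m$ collides with the fact that $h$ reduces $=^{ce}$ to $E^\dpl$. The key enabling observation is that, since $E$ is a ceer, $[x]_E$ is uniformly c.e.\ in $x$, so checking ``$[x]_E$ has met $W_{h(m),s}$ by stage $s$'' is computable uniformly in $m$ and $x$; this makes the self-referential definition legitimate.

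For part \eqref{h respects inclusion}, I would assume $W_i\subseteq W_j$ and pick $x\in W_{h(i)}$ with $[x]_E\cap W_{h(j)}=\emptyset$. By the Recursion Theorem, fix an index $m$ for the c.e.\ set defined as follows: at each stage $s$, enumerate the stage-$s$ approximation to $W_i$ into $W_m$; the first time a stage $s^*$ appears at which $[x]_E\cap W_{h(m),s^*}\neq\emptyset$, begin also enumerating $W_j$ into $W_m$ from stage $s^*$ onwards. If no such $s^*$ ever occurs, then $W_m=W_i$, so $W_{h(m)}=_E W_{h(i)}\ni x$ forces $[x]_E\cap W_{h(m)}\neq\emptyset$, contradicting the absence of $s^*$. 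If $s^*$ occurs, then (using $W_i\subseteq W_j$) $W_m=W_j$, so $W_{h(m)}=_E W_{h(j)}$ gives $[x]_E\cap W_{h(m)}=\emptyset$, contradicting the presence of $s^*$.

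For part \eqref{lem:Light For The Jump Is Finite Basedd}, the $\supseteq_E$ direction is immediate from part \eqref{h respects inclusion} applied to every finite $W_a\subsetneq W_i$. For $\subseteq_E$, I would assume some $x\in W_{h(i)}$ has $[x]_E\cap W_{h(a)}=\emptyset$ for every finite $W_a\subsetneq W_i$, and run an entirely parallel construction: use the Recursion Theorem to fix $m$ for the c.e.\ set that copies $W_i$ stage by stage but halts forever at the first stage $s^*$ (if any) witnessing $[x]_E\cap W_{h(m),s^*}\neq\emptyset$. If no such $s^*$ appears, then $W_m=W_i$ and the same contradiction as in part \eqref{h respects inclusion} arises. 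If $s^*$ appears, then $W_m$ is a finite subset of $W_i$, necessarily proper because $W_i$ is infinite; our standing assumption then forces $[x]_E\cap W_{h(m)}=\emptyset$, contradicting $s^*$.

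The main obstacle is just the bookkeeping needed to legitimize the self-reference: one must check that the procedure producing $W_m$ from $m$ (using $h$, the c.e.\ approximation to $W_i$, and the uniformly c.e.\ set $[x]_E$) is uniformly computable, so that a single application of the Recursion Theorem yields the required index. Once that is in place, both parts reduce to the two-case analysis above.
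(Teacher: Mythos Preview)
Your argument is correct and essentially the same as the paper's: both parts use the Recursion Theorem to build a self-referential c.e.\ set whose enumeration of $W_i$ is redirected (to $W_j$ in part~(1), or simply halted in part~(2)) once the relevant element is seen in $[W_{h(m)}]_E$, and then analyze the two cases. The paper's write-up is terser and has a couple of evident typos, but the underlying mechanism is identical to yours.
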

	\begin{proof}
		$(1)$: Suppose $x\in [W_{h(i)}]_E\smallsetminus [W_{h(j)}]_E$. Then, we use an index $e$ we control by the recursion theorem and we let $W_e=W_i$ unless $x\in [W_{h(j)}]_E$, in which case we make $W_e=W_j$. This gives a contradiction.
		\smallskip
		
		$(2)$: We already have $\bigcup_{W_a\subset W_i \text{  finite}} W_{h(a)}\subseteq_E W_{h(i)}$ by the first item. Suppose that $y \in [W_{h(i)}]_E$. Then, we use an index $e$ we control by the recursion theorem 
		and we begin enumerating $W_{h(i)}$ into $W_e$ until we see $y \in [W_{h(e)}]_E$. At this point, we stop enumerating any new elements into $W_e$.  We thus get a finite set $W_e$ and $y\in [W_{h(e)}]_E$.
	\end{proof}


\begin{thm}
	A ceer is light for the jump if and only if it is \SLFTJ.
\end{thm}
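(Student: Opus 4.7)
The backward direction follows from Lemma~\ref{Motivating SLFTJ}. For the forward direction, fix a reduction $h \colon \Id^{\dpl} \leq E^{\dpl}$ and consider the u.c.e.\ family $V_i := W_{h(e_i)}$, where, uniformly in $i$, $e_i$ is an index for the singleton $\{i\}$. The plan is to show this sequence witnesses $E$ being \SLFTJ.

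Suppose, towards a contradiction, that $V_i \subseteq_E \bigcup_{j \neq i} V_j$ for some $i$. Since $E$ and all the $V_j$ are c.e., the ``covering set''
\[
	M = \{\, j \neq i : (\exists x \in V_i)(\exists y \in V_j)\; x \mathrel{E} y \,\}
\]
is c.e., and the assumption gives $V_i \subseteq_E \bigcup_{j \in M} V_j$. Applying Lemma~\ref{properties of h for jumps}(\ref{h respects inclusion}) to each inclusion $\{j\} \subseteq M$, we obtain $V_i \subseteq_E W_{h(f)}$, where $f$ is a c.e.\ index for $M$. Since $i \notin M$, the c.e.\ sets $M$ and $M \cup \{i\}$ lie in distinct $\Id^{\dpl}$-classes, so $[W_{h(f)}]_E \subsetneq [W_{h(f^+)}]_E$, where $f^+$ is a c.e.\ index for $M \cup \{i\}$.

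The strategy is to contradict this strict containment by proving the reverse inclusion $[W_{h(f^+)}]_E \subseteq [W_{h(f)}]_E$. When $M$ is infinite, Lemma~\ref{properties of h for jumps}(\ref{lem:Light For The Jump Is Finite Basedd}) decomposes $W_{h(f^+)}$ into the $E$-union of the pieces $W_{h(\mathrm{ind}(F \cup \{i\}))}$ over finite $F \subseteq M$, reducing the task to showing $W_{h(\mathrm{ind}(F \cup \{i\}))} \subseteq_E W_{h(f)}$ for each such $F$; the $M$-finite case is to be treated separately by a direct combinatorial argument. I intend to prove the main reduction step via a recursion-theoretic construction modelled on the proof of Lemma~\ref{properties of h for jumps}(\ref{lem:Light For The Jump Is Finite Basedd}): given $x \in W_{h(\mathrm{ind}(F \cup \{i\}))}$, build a c.e.\ set $W_{\tilde e}$ which enumerates $F$ at stage $0$ and then streams in the elements of $M$, halting as soon as $x$ appears either in $[W_{h(\tilde e)}]_E$ or in $[V_i]_E$. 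In either halting branch $x \in [W_{h(f)}]_E$ follows: the first via Lemma~\ref{properties of h for jumps}(\ref{h respects inclusion}), since at every halting stage $W_{\tilde e} \subseteq M$; the second via the standing assumption $V_i \subseteq_E W_{h(f)}$.

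The principal technical obstacle I foresee is ruling out the non-halting possibility in which $W_{\tilde e}$ settles as $M$ without $x$ ever entering the observed closures at any finite stage. I plan to address this by exploiting that, under the assumption, $V_i$ is $E$-saturated inside $W_{h(f)}$, and that by Lemma~\ref{properties of h for jumps}(\ref{lem:Light For The Jump Is Finite Basedd}) any element of $W_{h(f^+)}$ beyond $W_{h(f)}$ must already be $E$-captured by the image of some finite subset $F \cup \{i\}$ of $M \cup \{i\}$; orchestrating the streaming schedule so that this detection occurs at a finite stage of the recursion is the delicate point and the main technical step of the proof.
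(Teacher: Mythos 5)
Your backward direction is fine. But the forward direction has a genuine gap at precisely the point you identify as the main technical step, and I do not think the construction you sketch can close it.

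You reduce the problem to showing that $W_{h(a)} \subseteq_E W_{h(f)}$ whenever $W_a = F\cup\{i\}$ for a finite $F\subseteq M$ and $W_f=M$. Your standing facts give $V_i\subseteq_E W_{h(f)}$ and $W_{h(b)}\subseteq_E W_{h(f)}$ whenever $W_b=F\subseteq M$ is finite, but nothing forces $W_{h(a)}$ to be $E$-contained in $[V_i]_E\cup [W_{h(b)}]_E$: a reduction of $\Id^{\dpl}$ to $E^{\dpl}$ need not be ``subadditive'' in this sense. In the very situation you must rule out there is an $x\in W_{h(a)}\smallsetminus[W_{h(f)}]_E$, and your recursion is then guaranteed to run forever: at every stage $W_{\tilde e}\subseteq M$, so Lemma~\ref{properties of h for jumps}(\ref{h respects inclusion}) gives $W_{h(\tilde e)}\subseteq_E W_{h(f)}$, and also $V_i\subseteq_E W_{h(f)}$; hence $x$ can enter neither $[W_{h(\tilde e)}]_E$ nor $[V_i]_E$, and both halting conditions are permanently blocked. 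The non-halting case simply reconstructs $M$ and produces no contradiction --- the $E$-class of $x$ is exactly the marker by which $h$ distinguishes $M\cup\{i\}$ from $M$, and your hypotheses give no reason for that class to be covered by the $V_j$'s. So this is a real gap, not a missing detail, and I do not believe the canonical family $V_i=W_{h(e_i)}$ can be made to work by an argument of this shape.

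This is why the paper does not use that family at all. Instead it builds the witnessing sequence $(V_i)_{i\in\omega}$ from scratch by a finite-injury priority construction: each strategy $\mathcal{P}_i$ carries parameters $a_i$, $B_i$, $z_i$, exploits the strict inclusion $[W(B_i)]_E\subsetneq[W(B_i\cup\{a_i\})]_E$ guaranteed by $h$ being a reduction, and repeatedly replaces its witness $z_i$ (growing $B_i$ or replacing $a_i$) whenever $z_i$ is threatened by some other $V_j$. That dynamic chasing is the real content of the theorem and has no counterpart in your proposal; you would need something of comparable substance to fill the gap.
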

\begin{proof}
	If $E$ is \SLFTJ, then it is light for the jump by Lemma~\ref{Motivating SLFTJ}.
	
	Let $E$ be a ceer which is light for the jump and fix $h$ to be a reduction of $\Id^\dpl$ to $E^\dpl$. We will construct a sequence $(V_i)_{i\in \omega}$ of c.e.\ sets witnessing that $E$ is \SLFTJ.
	
	We define a function from c.e.\ sets $F$ to c.e.\ sets $W(F)$ by taking an index $e$ we control by the recursion theorem and enumerating $F$ into $W_e$. Then we let $W(F)=W_{h(e)}$. At a given stage $s$, we let $W(F)_s=W_{h(e),s}$. We observe that for any index $i$ of $F$, we have $W(F)=W_{h(i)}$.
%
Moreover, by Lemma~\ref{properties of h for jumps}, we may assume that for every $s$ we have $W(F)_s\subseteq W(G)_s$ for any finite sets $F\subseteq G$. 

We fix a sequence of equivalence relations $E_s$ which limit to $E$ and we assume that at most one pair of classes collapses at any given stage $s$. Our construction is designed to meet the following requirements:

	\begin{itemize}
\item[$\mathcal{P}_i$]: $(\exists x \in V_i)(x$ is not $E$-equivalent to any $y\in V_j$, for $j\neq i$).
	\end{itemize}

\subsection*{Strategy}
Intuitively, the strategy to satisfy $\mathcal{P}_i$ acts as follows: We choose a number $a_i$ and begin with a set $B_i=\emptyset$. We want to exploit the fact that $W(B_i)\subsetneq_E W(B_i\cup \{a_i\}$). So, we choose a number $z$ which we believe is in $W(B_i\cup \{a_i\})\smallsetminus [W(B_i)]_E$ and we put this $z$ into $V_i$. If we  see $z$ become $E$-equivalent to a member of set $V_j$ with $j>i$, then this is because some set $B_j\cup \{a_j\}$ which does not contain $a_i$ has $z\in [W(B_j\cup \{a_j\})]_E$. We now give up on $z$ and update our parameter $B_i$ to contain $B_j\cup \{a_j\}$ and try to use the fact that $W(B_i)\subsetneq_E W(B_i\cup \{a_i\})$ for this new larger set $B_i$, and we choose a new number $z$. If this happens infinitely often, and each choice of $z$ ends up in $\bigcup_{j>i} [V_j]_E$, then we will have built a set $B_i$ not containing $a_i$ so that $W(B_i)=W(B_i\cup \{a_i\})$ contradicting the fact that $h$ is a reduction of $\Id^{\dpl}$ to $E^{\dpl}$. 

If we see $z$ go into $\bigcup_{j<i}[V_j]_E$, it is possible that this $E$-class is the only one distinguishing between $[W(B_i)]_E$ and $[W(B_i\cup \{a_i\})]_E$. So, we put $a_i$ into $B_i$ and choose a new parameter $a_i$. Now this class is already in $W(B_i)$, and since $\bigcup_{j<i} V_j$ will be finite, we will have to do this only finitely often, so the above strategy will eventually find us a $z\in V_i\smallsetminus \bigcup_{j\neq i} [V_j]_E$.

	\subsection*{Construction}
	
	The strategy for the $\mathcal{P}_i$ requirement will have parameters $a_i$, $B_i$ and $z_i$. These should be understood as follows: $B_i$ is a finite set which does not contain $a_i$. We want to use the fact that $W(B_i)\neq W(B_i\cup \{a_i\})$ to find an $E$-class which ``represents'' $a_i$. The parameter $z_i$ defines an element which is in $V_i\smallsetminus [\bigcup_{j\neq i} V_j]_E$ at the current stage. To refer to the value of a parameter at the end of stage $s$, we give it a superscript $s$.
	
	The strategy for $\mathcal{P}_i$ requires attention at stage $s+1$ if its parameter $z_i$ is undefined or is contained in $[\bigcup_{j\neq i} V_{j,s}]_{E_{s}}$
	or if it has been injured since it last acted. At any given stage, the highest priority strategy which requires attention acts. All lower-priority strategies are injured. For bookkeeping reasons, if a strategy is injured, it keeps its parameters but just knows that it is injured.
	The strategy acts as follows when acting at stage $s+1$:

	\subsubsection*{Step (I)} If the strategy has been injured since it last acted or if it has never acted before, then it chooses new parameters as follows: If it currently has parameters $a_i^s$ and $B_i^s$ defined, then it lets the parameter $B_i$ have value $B_i^s\cup \{a_i^s\}$. Otherwise, it lets the parameter $B_i$ have value $\emptyset$. It also chooses a new parameter $a_i$ to be a fresh number which has never before been considered.
	
%
%
%
%

	\subsubsection*{Step (II)} If $z_i$ is currently defined we run the module \texttt{TryTheNumber}($z_i$). Otherwise, we run the \texttt{PickANumber} module.
	
\smallskip	
	
	We now describe the module
	\texttt{TryTheNumber}($c$):
	\begin{enumerate}
	\item If $c \mathrel{\cancel{E_s}} w$ for every $w\in \bigcup_{j\neq i} V_{j,s}$, then we let $z_i^{s+1}=c$ and enumerate $c$ into $V_i$.

	\item If $c \mathrel{E_s} w$ for some $w\in V_{j,s}$ with $j<i$, then we let $D=B_i\cup \{a_i\}$ and we pick a new number $b$. We then reset the parameters $B_i=D$ and $a_i=b$. We then call the \texttt{PickANumber} module with these new parameters.
	
	\item If $c \mathrel{E_s} w$ for some $w\in V_j$ with $j>i$, then we let $D=B_i\cup \{a_j\}\cup B_j$. We reset $B_i$ to be $D$ and we call the \texttt{PickANumber} module with the new parameters (note that $a_i$ has not changed). 
	\end{enumerate}

We now describe the 
	\texttt{PickANumber} module: 
\begin{itemize}	
	\item[] Find the first $t>s$ so that $[W(B_i\cup \{a_i\})_t]_{E_s}\neq [W(B_i)_t]_{E_s}$ and let $c$ be the least element of $W(B_{i}\cup \{a_i\})_t\smallsetminus [W(B_i)_t]_{E_s}$. 
We then call the module \texttt{TryTheNumber}($c$).
\end{itemize}

\subsection*{Verification}

Note that we only ever enumerate a number into $V_i$ if it is already in $W(B_i\cup \{a_i\})$ and we only ever grow the set $B_i\cup \{a_i\}$ (either by putting $a_i$ into $B_i$ in case the strategy is injured or in case 2 of the \texttt{TryTheNumber} module, or by keeping $a_i$ the same and growing $B_i$ in case 3 of the \texttt{TryTheNumber} module), so we always have $V_{i,s}\subseteq {W(B_i^s\cup \{a_i^s\})}_s$.

\begin{lem}
	If the strategy $\mathcal{P}_i$ begins the \emph{\texttt{PickANumber}} module, it eventually terminates in case $(1)$.
\end{lem}
\begin{proof}
	It suffices to see that the stategy cannot take outcome (2) or (3) of the \texttt{TryTheNumber} module infinitely many times. Every time it takes outcome (2) or (3), we have a new element $w\in \bigcup_{j\neq i} V_{j,s}$ so that $w\in [W(B_i)]_{E_s}$. Note that $c$ was in $W(B_i\cup \{a_i\})_t\smallsetminus [W(B_i)_t]_{E_s}$ before the change of parameters, but $c\in [W(B_i)_t]_{E_s}$ after the change of parameters. Since $c\mathrel{E_s} w$, we also see $w$ has entered the set $[W(B_i)_t]_{E_s}$. Note that since we only ever grow $B_i$, once something is seen to be in $W(B_i)$, it remains there. Since $\bigcup_{j\neq i} V_{j,s}$ is finite at a given stage of the construction, this process must eventually stop.
\end{proof}

\begin{lem}\label{a_i never enters B_i}
	At every stage $s$, if $i<j$ and $a_i$ is defined, then $a_i\notin B_j\cup \{a_j\}$.
\end{lem}
\begin{proof}
	This is by induction on stages. When $a_i$ is chosen, it is chosen new so this holds at that stage. Similarly, $a_j$ is chosen new so $a_i\neq a_j$. At later stages, elements can enter $B_j$ by either adding $a_j$ to $B_j$ in outcome (2) of the \texttt{TryTheNumber} module or by adding $\{a_k\}\cup B_k$ for some $k>j$ into $B_j$. But $a_i\notin \{a_k\}\cup B_k$ by the inductive hypothesis.
\end{proof}

\begin{lem}
	Every strategy eventually settles with a parameter $z_i\notin [\bigcup_{j\neq i} V_j]_E$. Thus, every $V_i$ is finite and contains an element which is not $E$-equivalent to a member of any other $V_j$.
\end{lem}
\begin{proof}
	We proceed by induction. We may assume that every strategy $\mathcal{P}_j$ for $j<i$ has found such parameters $z_i$ by stage $s$. Since these parameters never change after stage $s$, those strategies never act after stage $s$ and the $\mathcal{P}_i$-strategy is never injured after stage $s$. The $\mathcal{P}_i$-strategy can then only take outcome (2) of the \texttt{TryTheNumber} module finitely often as there are only finitely many members of $V_j$ for $j<i$.
	
	Let $t>s$ be a stage late enough that the $\mathcal{P}_i$-strategy never takes outcome (2) of the \texttt{TryTheNumber} module after stage $t$. Then the parameter $a_i$ at stage $t$ is permanent. Further, note that $a_i$ never enters $B_i$ after stage $t$. This cannot happen via outcome (2), since outcome (2) never happens after stage $t$ and $a_i$ never enters $B_i$ via outcome (3) by Lemma~\ref{a_i never enters B_i}. 
	
	Considering the limiting value of $B_i$, since $a_i\notin B_i$, we see that $[W(B_i)]_E\subsetneq [W(B_i\cup \{a_i\})]_E$. Let $c$ be the least element of $W(B_i\cup \{a_i\})\smallsetminus [W(B_i)]_E$ and let $u>t$ be a stage large enough that $[W(B_i^u)_u]_{E_u}\cap [0,c] = [W(B_i)]_E\cap [0,c]$ and $[W(B_i^u\cup \{a_i^u\})_u]_{E_u}\cap [0,c] = [W(B_i\cup \{a_i\})]_E\cap [0,c]$. Then when we next run the \texttt{PickANumber} module after stage $u$, we pick this value of $c$ and we cannot take outcome (2) of \texttt{TryTheNumber}($c$) because $u>t$ and we cannot take outcome (3) as this would put $c$ into $[W(B_i)]_E$. Thus we must take outcome 1 so $c\in V_i$. 
	
	Now we argue that $c\notin [\bigcup_{j\neq i} V_j]_E$. Suppose towards a contradiction that $c\mathrel{E} w $ for $w\in V_j$ with $j\neq i$. Then the $\mathcal{P}_i$ strategy requires attention and since every higher priority strategy has settled, it gets to act. It then runs the \texttt{TryTheNumber}($c$) module and must take outcome (2) or (3) depending on whether $j<i$ or $j>i$. This cannot take outcome (2) as $u>t$. If it takes outcome (3), then we see $c\in [W(B_i)]_E$ contradicting the choice of $c$.
\end{proof}
This concludes the proof that the property of being light for the jump coincides with the property of being singly light for the jump.
\end{proof}

We now shift the focus to the case of $1$-dimensional ceers. Indeed, it is natural to ask for which c.e.\ sets $A$ is $E_A$ light for the jump (\cite[Question 1]{CCK}).
Clemens, Coskey, and Krakoff proved the following: on the one hand, if $A$ is not hyperhypersimple then $E_A$ is light for the jump \cite[Theorem 4.8]{CCK}; on the other hand, if $A$ is quasi-maximal, then $E_A$ is not light for the jump \cite[Theorem 4.17]{CCK}. This is not a characterization, as there are sets which are hyperhypersimple yet are not quasimaximal~\cite{Robinson2Theorems}. But the next corollary settles the problem.

\begin{cor}\label{cor: light for jump nonhyperhypersimple}
	For any c.e.\ set $A$, $E_A$ is light for the jump if and only if $A$ is nonhyperhypersimple.
\end{cor}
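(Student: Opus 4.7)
The CCK direction ($A$ not hyperhypersimple implies $E_A$ light for the jump) is already quoted from \cite{CCK}, so the task is to prove the converse. The plan is to combine the preceding theorem (``light for the jump'' iff \SLFTJ) with the standard characterization of hyperhypersimplicity: $A$ is not hyperhypersimple iff there is a uniformly c.e.\ sequence of pairwise disjoint c.e.\ sets, each meeting $\overline{A}$. So, starting from a uniformly c.e.\ family $(V_i)_{i \in \omega}$ witnessing \SLFTJ for $E_A$, with witnesses $z_i \in V_i$ satisfying $[z_i]_{E_A} \cap V_j = \emptyset$ for every $j \neq i$, I would extract such a disjoint array and derive the desired non-hyperhypersimplicity of $A$.

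First, I would use the structure of the $E_A$-classes: they are exactly $A$ itself together with the singletons $\{x\}$ for $x \in \overline{A}$. This forces each witness $z_i$ to be either a member of $A$ (which would make $A \cap V_j = \emptyset$ for every $j \neq i$, an event that can happen for at most one $i$) or a member of $\overline{A}$. Discarding the at-most-one exceptional index and reindexing, I may assume every $z_i \in \overline{A}$ and that $z_i$ never enters any $V_j$ with $j \neq i$.

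The core step is then to thin the $V_i$'s so as to make them disjoint while keeping a witness from $\overline{A}$ in each one. I would define
\[
U_i := \{x : (\exists s)(x \in V_{i,s} \text{ and } x \notin V_{j,s} \text{ for all } j \neq i)\},
\]
which is uniformly c.e.\ since, under the standard enumeration conventions, at stage $s$ only finitely many $V_j$ need to be inspected. Pairwise disjointness of the $U_i$ follows by considering the later of two hypothetical witnessing stages: at that stage the element $x$ would appear in both $V_i$ and $V_j$, violating the defining clause of whichever $U_k$ was witnessed later. The witness $z_i$ survives into $U_i$, because at the stage when $z_i$ first enters $V_i$ it has not yet (and by hypothesis never will) appear in any other $V_j$; thus each $U_i$ meets $\overline{A}$. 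This produces the required disjoint weak array and contradicts hyperhypersimplicity of $A$. The only real subtlety is this last thinning step, and choosing $U_i$ via the ``first stage at which $x$ is in $V_i$ alone'' makes both disjointness and the preservation of the $\overline{A}$-witness entirely routine.
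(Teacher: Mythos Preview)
Your proof is correct and follows the same approach as the paper's: both discard at most one index from the \SLFTJ witness so that every $z_i$ lies in $\overline{A}$ (hence $z_i\notin V_j$ for $j\neq i$), and then thin the $V_i$ to a pairwise disjoint weak array meeting $\overline{A}$. The only difference is that the paper imposes a second condition forcing the thinned sets to be finite, whereas you rely directly on the weak-array characterization of non-hyperhypersimplicity and need only disjointness; your version is therefore slightly simpler.
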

\begin{proof}
	If $A$ is nonhyperhypersimple, then $E_A$ is light for the jump by \cite[Theorem 4.8]{CCK}.
	
	Suppose $E_A$ is light for the jump. Then $E_A$ is \SLFTJ. Let $\mathcal{V}=(V_i)_{i\in \omega}$ witness this. We may assume that every $V_i$ has an element $z_i$ which is not in $\bigcup_{j\neq i} V_j$ and $z_i\notin A$. This is because all of $A$ constitutes a single class in $E_A$, so omitting one set from the sequence of $V_i$ suffices to guarantee this. We may also assume that at every stage at most one number is enumerated into at most one set $V_i$.
	
	We now define the sets $X_i$ defined as follows: $z\in X_i$ if 
	\begin{enumerate}
		\item $z\in V_i$ and $V_i$ is the first set in $\mathcal{V}$ which $z$ enters.
		\item $(\exists s>z \forall w<z )(w\in V_{i,s}\rightarrow w\in A_s\cup \bigcup_{j\neq i}V_{j,s})$.
	\end{enumerate}

The first condition implies that $\mathcal{X}=(X_i)_{i\in \omega}$ is a uniformly c.e.\ array of disjoint sets. Since every $V_i$ contains a member which is not contained in $A\cup \bigcup_{j\neq i} V_j$, the second condition ensures that each $X_i$ is finite. Finally, for each $V_i$, let $z$ be the least member of $V_i\smallsetminus (A\cup \bigcup_{j\neq i}V_j)$. Then $z\in X_i$. Thus $\mathcal{X}$ witnesses that $A$ is not hyperhypersimple.
\end{proof}

\begin{cor}
	The index set of ceers which are light for the jump is $\Sigma^0_4$-complete.
\end{cor}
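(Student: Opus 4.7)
The plan is to establish $\Sigma^0_4$-completeness by separately verifying membership and hardness, using the SLFTJ characterization of light for the jump (proved earlier in this section) for the upper bound and Corollary~\ref{cor: light for jump nonhyperhypersimple} for the lower bound.

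For the upper bound, I would unpack the SLFTJ definition for a ceer $E$ with c.e.\ index, writing $V_i^e = \{x : \langle i,x\rangle \in W_e\}$, so that the property ``$E$ is SLFTJ'' becomes
\[
\exists e\, \forall i\, \exists x\,\bigl(x\in V_i^e \ \wedge\ \forall j\neq i\,\forall y\,(y\in V_j^e \to \neg\,x\mathrel{E}y)\bigr).
\]
Since $E$ is c.e., $\neg\,x\mathrel{E}y$ is $\Pi^0_1$, so the innermost matrix ``$y\in V_j^e \to \neg\,x\mathrel{E}y$'' is $\Pi^0_1$; universally quantifying over $y$ and $j\neq i$ keeps it at $\Pi^0_1$. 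Conjoining with $x\in V_i^e$ (which is $\Sigma^0_1$) and existentially quantifying over $x$ lands us in $\Sigma^0_2$; the outer $\forall i$ pushes us to $\Pi^0_3$, and finally $\exists e$ gives $\Sigma^0_4$. Combining with the $\Pi^0_2$ condition that $W_e$ codes an equivalence relation still leaves $\Sigma^0_4$, and by the main theorem of this section this is equivalent to $E$ being light for the jump.

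For the lower bound, I would use that the map $e \mapsto$ (an index for) $E_{W_e}$ is computable and, by Corollary~\ref{cor: light for jump nonhyperhypersimple}, it is a many-one reduction from the index set of non-hyperhypersimple c.e.\ sets to the index set of ceers which are light for the jump. Since the index set of hyperhypersimple c.e.\ sets is a classical $\Pi^0_4$-complete set (see, e.g., Soare's book or Odifreddi Vol.~II), its complement among c.e.\ indices is $\Sigma^0_4$-complete, and hardness transfers through the reduction.

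The main content is really in the upper-bound counting, which relies on the fact that we have a purely existential-over-a-u.c.e.-sequence characterization of light for the jump; without the SLFTJ theorem, the naive complexity estimate on the definition of light for the jump (quantifying over reductions $\Id^\dpl\leq E^\dpl$) is much worse. Once SLFTJ is in hand and the classical result on non-hyperhypersimplicity is invoked, no further obstacles remain.
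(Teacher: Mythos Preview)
Your proposal is correct and follows essentially the same approach as the paper: the upper bound via the SLFTJ characterization and a quantifier count, and the lower bound via Corollary~\ref{cor: light for jump nonhyperhypersimple} together with the classical $\Sigma^0_4$-completeness of the nonhyperhypersimple index set. The paper merely asserts the $\Sigma^0_4$ upper bound as ``easy to calculate'' without spelling out the count, so your version is a more detailed rendering of the same argument.
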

\begin{proof}
It is easy to calculate that being light for the jump is a $\Sigma^0_4$ problem. To conclude, it is sufficient to recall that the index set of nonhyperhypersimple c.e.\ sets is $\Sigma^0_4$-complete (see \cite{yates1,yates2}, where the result is announced, and  \cite[Theorem XII 4.13]{Soare} where it is proved) and then use Corollary~\ref{cor: light for jump nonhyperhypersimple}.
\end{proof}

We finish our discussion of which ceers  are light for the jump by focusing on a special class of ceers which will also be  considered        in the next section: dark minimal ceers, i.e., dark ceers $E$ so that $R<E$ implies that $R$ is finite. Dark minimal ceers are of special interest for the theory of ceers. For example, we code graphs onto the dark minimal ceers to show that the theory of the partial order of ceers is as rich as possible, being computably isomorphic with first-order arithmetic~\cite{TheoryOfCeers}.

\begin{prop}
No dark minimal ceer is light for the jump.
\end{prop}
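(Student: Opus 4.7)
The plan is to combine the characterization of ``light for the jump'' just established (equivalence with \SLFTJ) with Lemma~\ref{dark minimal intersects all classes}. Since dark minimal ceers are so constrained that every c.e.\ set either meets finitely many classes or meets every class, a sequence $(V_i)$ witnessing \SLFTJ would give immediate contradictory behavior.

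Assume for contradiction that $E$ is a dark minimal ceer and that $(V_i)_{i \in \omega}$ is a uniformly c.e.\ sequence witnessing that $E$ is \SLFTJ. For each $i$ pick, as allowed by the definition, an element $x_i \in V_i$ with $[x_i]_E \cap \bigcup_{j \neq i} V_j = \emptyset$. First I would observe that the classes $[x_i]_E$ are pairwise distinct: if $i \neq k$ and $x_i \mathrel{E} x_k$, then $x_k \in V_k \subseteq \bigcup_{j\neq i} V_j$ would put a member of $[x_i]_E$ in $\bigcup_{j \neq i} V_j$, contradicting the choice of $x_i$.

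Now for a fixed $i$, set $U_i = \bigcup_{j \neq i} V_j$, which is c.e.\ uniformly in $i$. By the previous paragraph, $U_i$ contains the pairwise $E$-inequivalent elements $\{x_j : j \neq i\}$, so $U_i$ meets infinitely many $E$-classes. Applying Lemma~\ref{dark minimal intersects all classes} with $R = E$, $U_i$ must meet \emph{every} $E$-class; in particular it must meet $[x_i]_E$. But by our choice of $x_i$, $[x_i]_E \cap U_i = \emptyset$, a contradiction.

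There is no real obstacle here beyond recognizing that the witnesses $x_i$ already give an infinite set of pairwise $E$-inequivalent elements inside the c.e.\ set $U_i$; once that is noted, Lemma~\ref{dark minimal intersects all classes} does all the work. The only subtle point is to choose the $x_i$ inside $V_i$ rather than an arbitrary witness in $V_i \setminus [\bigcup_{j\neq i} V_j]_E$, but the definition of \SLFTJ supplies such an $x_i$ directly.
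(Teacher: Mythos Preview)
Your proof is correct, but it takes a different route from the paper's own argument. You invoke the full characterization ``light for the jump $\Leftrightarrow$ \SLFTJ'' and then derive a contradiction from a witnessing sequence $(V_i)$ by noting that $U_i=\bigcup_{j\neq i}V_j$ meets infinitely many classes (via the pairwise inequivalent $x_j$), hence all classes by Lemma~\ref{dark minimal intersects all classes}, contradicting $[x_i]_E\cap U_i=\emptyset$. The paper instead works directly with a putative reduction $h:\Id^\dpl\leq E^\dpl$ and uses only the elementary Lemma~\ref{properties of h for jumps}: from part~(\ref{lem:Light For The Jump Is Finite Basedd}) it follows that for any infinite $W_i$ the image $W_{h(i)}$ must meet infinitely many $E$-classes (otherwise two nested finite subsets would map to $E$-equal images), whence Lemma~\ref{dark minimal intersects all classes} forces $[W_{h(e_0)}]_E=[W_{h(e_1)}]_E=\omega$ for $W_{e_0}=\Evens$ and $W_{e_1}=\Odds$, contradicting injectivity of $h$ on $\Id^\dpl$-classes. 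Your argument is clean and slightly more combinatorial once \SLFTJ is in hand, but it leans on the harder equivalence theorem; the paper's argument is more self-contained, needing only the soft monotonicity/finite-basedness lemma rather than the full \SLFTJ construction.
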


\begin{proof}

Towards a contradiction, suppose that there exists $h: \Id^\dpl\leq E^\dpl$, for a dark minimal ceer $E$.  Lemma~\ref{properties of h for jumps}(2) guarantees that, if $W_i$ is infinite, then $W_{h(i)}$ must intersect infinitely many $E$-classes, as otherwise there would be finite c.e.\ sets $W_a \subset W_b \subset W_i$ so that $[W_a]_E=[W_b]_E$, a contradiction. So, let $W_{e_0}$ and $W_{e_1}$ be the evens and the odds, respectively. Since $E$ is dark minimal, by Lemma~\ref{dark minimal intersects all classes}, we  obtain that  $[W_{h(i)}]_E= [W_{h(j)}]_E=\omega$, a contradiction.
%
\end{proof}


\section{The Higher Jump Hierarchy of Ceers}\label{Double Jumps of Ceers}

We now turn our attention to higher jumps applied to ceers. We first consider the 1-dimensional case where, contrary to the picture for the single jump, every co-infinite c.e.\ set $A$ has the property that $E_A$ has the highest possible double-jump. Of course, we focus on the co-infinite c.e.\ sets because, if $A$ is co-finite, then $E_A$ has only finitely many classes.

\begin{thm}
	If $A$ is a co-infinite c.e.\ set, then $\Id^{\dpl\dpl}\leq E_A^{\dpl\dpl}$.
\end{thm}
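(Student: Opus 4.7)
My plan is to construct a computable $F:\omega\to\omega$ realizing $\Id^{\dpl\dpl}\leq E_A^{\dpl\dpl}$. The starting point is \cite[Theorem 4.2]{CCK}: since $A$ is coinfinite, $E_A$ is an infinite ceer, so there exists a computable $f$ with $n=m$ iff $f(n)\mathrel{E_A^\dpl}f(m)$, i.e., the $E_A^\dpl$-classes $[f(n)]_{E_A^\dpl}$ are pairwise distinct. A first observation is that setting $g(i)$ to be an index for the c.e.\ set $\{f(n):n\in W_i\}$ yields a reduction $g:\Id^\dpl\leq E_A^{\dpl\dpl}$: the family of $E_A^\dpl$-classes appearing in $W_{g(i)}$ is exactly $\{[f(n)]_{E_A^\dpl}:n\in W_i\}$, which by the injectivity of $f$ is in bijection with $W_i$, so the $E_A^{\dpl\dpl}$-class of $g(i)$ determines $W_i$ uniquely. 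However, iterating this lift once more only gives $\Id^{\dpl\dpl}\leq E_A^{\dpl\dpl\dpl}$, one $\dpl$ too many on the right-hand side.

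To hit $E_A^{\dpl\dpl}$ directly, I would define $F$ so that $W_{F(x)}$ enumerates, for each $i\in W_x$ and each auxiliary parameter $n$, the index of a c.e.\ set $V_{i,n}$ built from $(W_i,n)$ via $f$ in such a way that: (i) if $W_i=W_{i'}$, then $V_{i,n}$ and $V_{i',n}$ are $=^{ce}$, so that multiple indices of the same c.e.\ set contribute identical $E_A^\dpl$-classes; and (ii) as $(i,n)$ ranges over $W_x\times\omega$, the resulting family of $E_A^\dpl$-classes unambiguously recovers $\{W_i:i\in W_x\}$ modulo $=^{ce}$. Condition (i) is straightforward: one declares $V_{i,n}$ as a uniform function of the c.e.\ set $W_i$ and $n$ (for instance, something like $\{f(k):k\in W_i\}$ together with extra $n$-dependent data assembled from $f$). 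Condition (ii) is the delicate part: it amounts to showing that varying $n$ produces enough $E_A^\dpl$-classes to separate different c.e.\ sets at the family level, even when the $f$-images of their elements overlap in the $E_A$-structure.

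The main obstacle is the case when $A$ is hyperhypersimple: as established earlier in the paper, for such $A$ there is no reduction $\Id^\dpl\leq E_A^\dpl$, and hence no single $E_A^\dpl$-class can serve as a canonical representative of a c.e.\ set. The double-jump level is essential: the construction must genuinely assign a c.e.\ \emph{family} of $E_A^\dpl$-classes to each c.e.\ set rather than a single class, and the $n$-parameter is what provides this extra room. The principal technical point will be to choose the auxiliary data inside $V_{i,n}$ so that, for HHS $A$, distinct c.e.\ sets never accidentally produce the same $E_A^\dpl$-class collection, which requires careful analysis of how c.e.\ sets interact with the scarce ``$\overline{A}$-visible'' structure that $E_A^\dpl$ affords in the HHS case.
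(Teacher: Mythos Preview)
Your framework is on target: to each c.e.\ set $S$ appearing in $\{W_i:i\in W_x\}$ you want to attach a family of $E_A^\dpl$-classes depending only on $S$, pad with fixed junk, and recover $\{W_i:i\in W_x\}$ from the total collection of $E_A^\dpl$-classes. But the proposal stops at the framework. You acknowledge that the crux is the choice of the $V_{i,n}$, and you gesture at building them from $\{f(k):k\in W_i\}$ plus $n$-dependent data, but no construction is actually given, and it is not clear how to complete this: the obvious attempts either collapse to something depending only on $\bigcup_{i\in W_x}W_i$, or else require encoding the set $W_i$ into a single $E_A^\dpl$-class, which is exactly what fails for hyperhypersimple $A$. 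So there is a genuine gap---the key idea is absent, not merely deferred.

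The paper's construction does not go through the reduction $f$ at all. One uses directly that two c.e.\ sets meeting $A$ are $E_A^\dpl$-equivalent iff they agree on $\bar A$, and takes as marker for the \emph{index} $k$ (not for the set $W_k$) the cofinite set $\omega\smallsetminus\{F(k)\}$, where $F(k)$ is the $k$th element of $\bar A$; this $F$ is $\Delta^0_2$. These markers are pairwise $E_A^\dpl$-distinct and $E_A^\dpl$-distinct from $\omega$ and from every finite set. The image of $e$ is then the family consisting of $\omega$, all finite sets, and $\omega\smallsetminus\{F(k)\}$ for exactly those $k$ with $W_k\in\{W_i:i\in W_e\}$. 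To enumerate this uniformly: for each $i\in W_e$ and each $k$, build $V^m_{i,k}$ by enumerating $[0,\ell]\smallsetminus\{F_s(k)\}$ once both the stage $s$ and the length of agreement between $W_i$ and $W_k$ reach $\ell$; if $F_s(k)$ changes, burn $V^m_{i,k}$ to $\omega$ and restart as $V^{m+1}_{i,k}$. When $W_i=W_k$ the surviving attempt becomes $\omega\smallsetminus\{F(k)\}$; otherwise it stays finite and is absorbed by the padding. The combination of the length-of-agreement device (to approximate the $\Pi^0_2$ condition $W_k=W_i$) with the $\Delta^0_2$ enumeration of $\bar A$ (handled by burning) is the missing ingredient.
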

\begin{proof}
	We describe an algorithm $h$ for reducing $\Id^{\dpl\dpl}$ to $E_A^{\dpl\dpl}$. Let $F:\omega\rightarrow \omega$ be so $F(n)$ is the $n$th element of $\omega\smallsetminus A$. Note that $F$ is $\Delta^0_2$, so we fix also $F_s$ a uniformly computable sequence of functions limiting to $F$.
	
	We arrange it so that for any index $e$, $h(e)$ is an index for a uniformly c.e.\ family consisting of $\omega$, all finite sets, and $\omega\smallsetminus \{F(k)\}$ for each $k$ so that $W_k=W_{i}$ for some $i\in W_e$. We observe that this is a reduction from $\Id^{\dpl\dpl}$ into $E^{\dpl\dpl}$.
	
	Fix an index $e$ and we must uniformly produce the uniform family which is to be its image under $h$.
	Begin with a uniform enumeration of $\omega$ and all finite sets. We add to this a sequence of sets $V_{i,k}^m$. If $i$ enters $W_e$, then make $V_{i,k}^0$ active. If $V_{i,k}^j$ is active for some $j$ and $F_{s+1}(k)\neq F_s(k)$, then we deactivate $V_{i,k}^j$, make $V_{i,k}^j=\omega$ and we activate $V_{i,k}^{j+1}$.
		If $V_{i,k}^j$ is active at stage $s$ and both $s$ and the length of agreement between $W_i$ and $W_k$ at stage $s$ are $\geq \ell$, then we enumerate $[0,\ell]\smallsetminus \{F_s(k)\}$ into $V_{i,k}^j$.
		
	
	It is straightforward to check that this gives a uniform enumeration of the described family.	
\end{proof}

Next we see that, unlike the 1-dimensional case, there are ceers which are not high$_n$ for the computable Friedman-Stanley jump for any $n$. That is, $\Id^{\dpl\hat{n}}\not\leq E^{\dpl\hat{n}}$. We do this by considering the low dark minimal ceers. Dark minimal ceers have been used heavily in the literature, and we now note that there are dark minimal ceers which are also low\footnote{We emphasize that we are using lowness in the sense of the Turing jump on sets, not any of the equivalence relation jumps from Definition \ref{def:jumps}}      .

\begin{lem}
	There are low dark minimal ceers.
\end{lem}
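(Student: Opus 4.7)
The plan is to modify the standard construction of a dark minimal ceer (as in~\cite{ABSM}) by layering on Sacks-style lowness requirements. We build $E$ by stages on a priority tree whose nodes guess the usual $\emptyset'$-queries needed for the dark minimal strategies, and in addition satisfy the lowness requirements $N_e$: if $\Phi_e^{E_s}(e)\downarrow$ is first witnessed at stage $s$ with use $u$, we impose the restraint that, from stage $s$ onward, no two $E$-classes both of whose least representatives are $\le u$ may be merged. Since merging classes is the only positive action on $E$ viewed as a c.e.\ set of pairs, such a restraint is sufficient to preserve the computation $\Phi_e^{E}(e)$.

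The darkness requirements $D_e$ (diagonalizing against $\varphi_e$ being a reduction of $\Id$ into $E$) and the minimality requirements $M_e$ (ensuring that if $\varphi_e$ reduces some infinite ceer $R_e$ to $E$ then $E\leq R_e$) are handled as in the standard construction, with one adjustment: every time a strategy is ready to act by merging two classes, or by declaring a coding location for the minimality coding, it picks its witnesses fresh above the current restraint visible along its guess on the tree. Along the true path, only finitely many higher-priority lowness restraints persist, so such fresh witnesses always exist, and the original verification from \cite{ABSM} goes through verbatim once ``fresh witness'' is replaced by ``fresh witness above the currently visible restraint.''

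Lowness then follows by the standard argument: $\emptyset'$ can compute the true path up to level $e$, and from the final value of the restraint imposed by $N_e$ decide whether $\Phi_e^E(e)\downarrow$. The main obstacle is scheduling the minimality actions, whose triggers depend on the adversary's images $\varphi_e(n)$ and may in principle lie below the current restraints, so that they remain compatible with lowness. This is resolved purely by the tree-of-strategies discipline: minimality is coded at locations chosen by the node itself (above the visible restraint), and it is the node's choice of coding locations --- not the adversary's image points --- that is merged, so each $N_{e'}$ sees only finitely many injuries along the true path and its restraint eventually stabilizes.
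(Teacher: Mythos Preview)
Your outline takes a considerably heavier route than the paper. The paper does not use separate darkness requirements $D_e$ and minimality-coding requirements $M_e$; instead it uses the single family
\[
\mathcal{R}_{e,n}:\ \text{if $W_e$ intersects infinitely many $E$-classes, then $W_e$ intersects $[n]_E$},
\]
together with $\mathcal{I}_m$ (at least $m$ classes). Each $\mathcal{R}_{e,n}$ acts \emph{at most once}: it waits for an element of $W_e$ lying in a class not currently restrained by higher-priority requirements and collapses it with $n$. So the whole construction is plain finite injury on a linear priority ordering, with no tree and no $\emptyset'$-guessing. Lowness then slots in trivially: the requirement $\mathcal{L}_e$ merely restrains finitely many classes from being merged with one another, which to any lower-priority $\mathcal{R}_{e',n'}$ looks exactly like the restraint already imposed by an $\mathcal{I}_m$-requirement. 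That is the entire argument in the paper.

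Your formulation, with an explicit $M_e$ that must \emph{build} a reduction $E\le R_e$, forces infinitary action per requirement and hence the tree machinery. More to the point, your description of the $M_e$-module is where the argument becomes thin. Producing $g\colon E\to R_e$ in the usual way requires, for each $n$, collapsing $n$ with $\varphi_e(g(n))$ for a suitably chosen $g(n)$, so the merges \emph{do} involve the adversary's image points, contrary to what you write. The reason this is still compatible with lowness is not that those image points are avoided, but that the \emph{other} endpoint of each merge can be taken fresh, so each merge joins a class of large least representative to an arbitrary class and hence disturbs no pair below a fixed use. If instead you intend an $M_e$-module in which only node-chosen locations are merged with one another, it is not clear how that produces a reduction of $E$ into $R_e$ at all; as written, that step is a gap. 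Either way, the $\mathcal{R}_{e,n}$ formulation sidesteps all of this.
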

\begin{proof}
The construction of a dark minimal ceer $E$ has requirements of two types:
\begin{itemize}
	\item[$\mathcal{R}_{e,n}$:] If $W_e$ is intersects infinitely many $E$-classes, then it intersects $[n]_E$.
	\item[$\mathcal{I}_m$:] $E$ has $\geq m$ classes.
\end{itemize}	

To these, we can add the lowness requirement: 
\begin{itemize}
\item[$\mathcal{L}_e$:] If for infinitely many stages $s$ we have $\phi^{E_s}_e(e)\downarrow$, then $\phi^E_e(e)$ converges.
\end{itemize}
$\mathcal{L}$-requirements only place restraint on some finite collection of $E$-classes preventing collapse. This fits in the finite injury construction of a dark minimal ceer, as given in \cite{JoinsAndMeets} (i.e., to a lower-priority requirement, this restraint is no different than the restraints placed by higher-priority $\mathcal{I}$-requirements).
\end{proof}

Recall that all dark minimal ceers $E$ have the property that if $W_e$ intersects infinitely many $E$-classes, then $W_e$ must intersect every $E$-class.
The following few lemmas use this property to bound the complexity of the jumps of dark minimal ceers.

\begin{lem}\label{lem:lowness makes stuff Delta2}
	If $E$ is a dark minimal ceer, then for each $k\in \omega$, the set of $i$ so that $W_i/E$ has size $\geq k$ is a $\Delta^0_2(E)$ set.
	
	Further, the set of triples $(i,j,k)$ so that $\vert W_i/E\vert = k$ and $W_i=_E W_j$ is $\Delta^0_2(E)$.
	
	In particular, if $E$ is a low dark minimal ceer then these sets are both $\Delta^0_2$.
\end{lem}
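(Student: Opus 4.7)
My plan is to apply Shoenfield's limit lemma, which identifies $\Delta^0_2(E)$ with the class of sets that are limit-computable from $E$. For each part I will exhibit a total $E$-computable approximation whose pointwise limit is the characteristic function of the target set; since $E$ is c.e., having $E$ as oracle lets us decide both $E$ and its complement, so the approximations below are uniformly $E$-computable.

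For the first part, at each stage $s$ I would define $R_s^i = \{w \in W_{i,s} : w = \min([w]_E \cap W_{i,s})\}$. Then $|R_s^i|$ counts exactly the $E$-classes that $W_{i,s}$ meets, so it is non-decreasing in $s$ with $\lim_s |R_s^i| = |W_i/E|$, viewed in $\omega \cup \{\infty\}$. Setting $g_s(i) = 1$ iff $|R_s^i| \geq k$ gives a non-decreasing $E$-computable approximation whose limit is the characteristic function of $\{i : |W_i/E| \geq k\}$, placing this set in $\Delta^0_2(E)$.

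For the second part, I would augment the approximation with a coverage check. At stage $s$ compute $R_s^i$ and $R_s^j$, and accept the triple $(i,j,k)$ iff $|R_s^i| = |R_s^j| = k$ and every element of $R_s^i$ is $E$-equivalent to some element of $R_s^j$ and vice versa, a finite check using the $E$ oracle. When $|W_i/E|$ and $|W_j/E|$ are finite (as is forced whenever $|W_i/E|=k$), both $R_s^i$ and $R_s^j$ stabilize to canonical sets of one $E$-minimum representative per class, so the approximation converges to the truth value of $|W_i/E| = k \wedge W_i =_E W_j$. Another appeal to Shoenfield yields the $\Delta^0_2(E)$ bound. The ``in particular'' clause is immediate from $E' \equiv_T \emptyset'$ for low $E$, whence $\Delta^0_2(E) = \Delta^0_2$.

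The step that demands the most care is verifying that $R_s^i$ stabilizes when $|W_i/E|$ is finite. For each $E$-class $C$ of $W_i$, its numerical minimum (within $C \cap W_i$) must eventually enter $W_{i,s}$, and once it does it remains the minimum of its class in every later $W_{i,s}$, since nothing $E$-equivalent and numerically smaller can enter afterwards. Any momentarily extraneous representative in $R_s^i$ is dislodged when a smaller $E$-equivalent element appears in $W_{i,s}$, which can happen only finitely often per class.
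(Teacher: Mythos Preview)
Your argument is correct and is essentially the paper's proof recast through the limit lemma: the paper notes directly that $\{i : |W_i/E|\geq k\}$ is $\Sigma^0_1(E)$ via the formula $\exists x_1,\ldots,x_k\in W_i\,\bigwedge_{l\neq m} x_l\mathrel{\cancel E}x_m$ (your monotone approximation $|R_s^i|$ is exactly the search for such witnesses), and for the second part checks $|W_i/E|=|W_j/E|=k$ in $\Delta^0_2(E)$ and then $E$-computably finds representatives and matches them under a permutation, which is precisely what your stage-by-stage coverage check implements. Neither argument invokes dark minimality; that hypothesis is only used in the subsequent lemma.
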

\begin{proof}
	The quotient $W_i/E$ has size at least $k$ if and only if $\exists x_1,\ldots x_k\in W_i \bigwedge_{k\neq j} {x_k\mathrel{\cancel E} x_j}$. This is $\Sigma^0_1(E)$.
	
	To check if $(i,j,k)$ is so that $\vert W_i/E\vert = k$ and $W_i=_E W_j$, we can in a $\Delta^0_2(E)$ way check that $\vert W_i/E\vert = k$ and $\vert W_j/E\vert = k$ by the above. Then, if this is the case, we can in a $E$-computable way find elements $x_1,\ldots x_k\in W_i$ so that $\bigwedge x_i \mathrel{\cancel E} x_j$ and $y_1,\ldots y_k\in W_i$ so that $\bigwedge y_i \mathrel{\cancel E} y_j$. Then we need only check in a $E$-computable way that $\bigwedge_{i\leq k} x_i \mathrel{E} y_{\sigma(i)}$ for some permutation $\sigma$.
\end{proof}

\begin{lem}
	If $E$ is a dark minimal ceer, then $E^{\dpl\dpl}$ is $\Delta^0_4(E)$.
	
	In particular, if $E$ is a low dark minimal ceer then $E^{\dpl\dpl}$ is $\Delta^0_4$. 
\end{lem}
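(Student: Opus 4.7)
The plan is to show that $E^{\dpl\dpl}$ is both $\Sigma^0_4(E)$ and $\Pi^0_4(E)$, by decomposing the relation according to the structure of the $E^\dpl$-classes under the assumption of dark minimality. The key structural input is Lemma~\ref{dark minimal intersects all classes}: every c.e.\ set $W_a$ with $|W_a/E|=\infty$ satisfies $[W_a]_E = \omega$, so all such indices are collapsed into a single $E^\dpl$-class, while every other $E^\dpl$-class is indexed by a finite collection of $E$-classes. Thus $[W_i]_{E^\dpl}$ is completely described by two pieces of data: (i) whether $W_i$ contains some $a$ with $|W_a/E|=\infty$, and (ii) for each finite $k$, the collection of ``size-$k$'' $E^\dpl$-classes that $W_i$ hits.

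From this decomposition, $i \mathrel{E^{\dpl\dpl}} j$ is equivalent to the conjunction of the biconditional of the data in (i) for $W_i$ and $W_j$, and, for every $k \in \omega$, the statement ``for every $a\in W_i$ with $|W_a/E|=k$ there is $b\in W_j$ with $W_a =_E W_b$, and vice versa.'' Note that if $|W_a/E|=k$ then $W_a =_E W_b$ automatically forces $|W_b/E|=k$, so no extra condition on $b$ is needed. The goal is then to bound the quantifier complexity of each piece using Lemma~\ref{lem:lowness makes stuff Delta2}.

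For piece (i): the statement ``$|W_a/E|=\infty$'' is $\Pi^0_2(E)$ since it amounts to $\forall k\,(|W_a/E|\geq k)$ where each clause is $\Sigma^0_1(E)$ by the proof of Lemma~\ref{lem:lowness makes stuff Delta2}. Hence ``$\exists a\in W_i\,(|W_a/E|=\infty)$'' is $\Sigma^0_3(E)$, and the biconditional of two $\Sigma^0_3(E)$ statements, written as $(A\wedge B)\vee(\neg A\wedge \neg B)$, lies in $\Sigma^0_3(E)\cup\Pi^0_3(E) \subseteq \Delta^0_4(E)$. For piece (ii): using the second half of Lemma~\ref{lem:lowness makes stuff Delta2}, ``$|W_a/E|=k$ and $W_a =_E W_b$'' is $\Delta^0_2(E)$ uniformly in $(a,b,k)$. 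Consequently, for $a$ satisfying the $\Delta^0_2(E)$ hypothesis ``$a\in W_i$ and $|W_a/E|=k$,'' the inner ``$\exists b\in W_j\,(W_a =_E W_b)$'' becomes $\Sigma^0_2(E)$, so the implication is $\Sigma^0_2(E)$, and universally quantifying over $a$ and $k$ yields $\Pi^0_3(E)$. The conjunction of a $\Delta^0_4(E)$ and a $\Pi^0_3(E)$ condition is $\Delta^0_4(E)$.

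The main obstacle to be careful about is the temptation to read ``$W_a=_E W_b$'' as its generic $\Pi^0_2(E)$ form, which would end in $\Pi^0_4(E)$ without the matching $\Sigma^0_4(E)$ upper bound needed for $\Delta^0_4(E)$; the point of the argument is that once $|W_a/E|$ is pinned down to a finite value, Lemma~\ref{lem:lowness makes stuff Delta2} collapses ``$W_a=_E W_b$'' to a $\Delta^0_2(E)$ condition, which is what saves one quantifier. The ``in particular'' clause is immediate: if $E$ is low then $E^{(n)}\equiv_T \emptyset^{(n)}$ for $n\geq 1$, so $\Delta^0_4(E)=\Delta^0_4$.
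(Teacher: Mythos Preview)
Your proof is correct and follows essentially the same approach as the paper: split $i\mathrel{E^{\dpl\dpl}}j$ into the ``finite-size'' matching (your piece (ii), the paper's condition \eqref{eq1}), which is $\Pi^0_3(E)$ via Lemma~\ref{lem:lowness makes stuff Delta2}, and the ``infinite-size'' matching (your piece (i), the paper's condition \eqref{eq2}), which is a Boolean combination of $\Sigma^0_3(E)$ statements and hence $\Delta^0_4(E)$. One small notational slip: the biconditional $(A\wedge B)\vee(\neg A\wedge\neg B)$ of two $\Sigma^0_3(E)$ statements is not literally in $\Sigma^0_3(E)\cup\Pi^0_3(E)$ (it is a disjunction of a $\Sigma^0_3(E)$ set and a $\Pi^0_3(E)$ set, not a member of either class), but your conclusion that it lies in $\Delta^0_4(E)$ is of course correct.
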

\begin{proof}
	Let $\mathcal{V}_i,\mathcal{V}_j$ be two uniformly c.e.\ families of c.e.\ sets (given by appropriate indices, i.e., $\mathcal{V}_i = \{W_m \colon m\in W_i\})$. Then $W_i \subset_{E^{\dpl}} W_j$ if and only if  the following hold:

	\begin{equation}\label{eq1}
	(\forall S\in \mathcal{V}_i)(\forall k\in \omega) \left[
	\vert S/E\vert =k \rightarrow (\exists F\in \mathcal{V}_j) \left(
	F=_E S
	\right)
	\right]
	\end{equation}	
	\begin{equation}\label{eq2}
	(\exists S\in \mathcal{V}_i)(\forall k\in\omega)
\left[	
	 \vert S/E\vert >k \rightarrow (\exists S\in \mathcal{V}_j)(\forall k\in\omega)(\vert S/E\vert >k)
	 \right].
	\end{equation}
	
	The conditions $\vert S/E\vert = k$ and $F=_E S$ in \eqref{eq1} are $\Delta^0_2(E)$ by Lemma~\ref{lem:lowness makes stuff Delta2}. Thus, the condition \eqref{eq1} is $\Pi^0_3(E)$. Similarly, using Lemma~\ref{lem:lowness makes stuff Delta2}, \eqref{eq2} is $\Delta^0_4(E)$. Thus, $W_i=_{E^\dpl} W_j$, or $i \mathrel{E^{\dpl\dpl}} j$ is a $\Delta^0_4(E)$ condition.
\end{proof}

\begin{cor}\label{cor: complexity of jumps of dark minimals}
	If $E$ is a dark minimal ceer, then for any $k>2$, the equivalence relation $E^{\dpl \hat k}$
	is $\Pi^0_{2k-1}(E)$. In particular, if $E$ is a low dark minimal ceer then $E^{\dpl \hat k}$ is $\Pi^0_{2k-1}$.
\end{cor}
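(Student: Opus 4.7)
The plan is a straightforward induction on $k \geq 3$. The base case invokes the preceding lemma, which gives $E^{\dpl\dpl} \in \Delta^0_4(E)$ and in particular a $\Sigma^0_4(E)$-definition of $E^{\dpl\dpl}$. The induction step is a standard quantifier count showing that the jump of any $\Pi^0_n(E)$ equivalence relation with $n$ odd lies in $\Pi^0_{n+2}(E)$.

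For the base case $k = 3$, write $i \, E^{\dpl \hat 3} \, j$ as the conjunction of $[W_i]_{E^{\dpl\dpl}} \subseteq [W_j]_{E^{\dpl\dpl}}$ and its symmetric. The first conjunct unfolds as $\forall m \, (m \in W_i \to \exists n \, (n \in W_j \wedge m \, E^{\dpl\dpl} \, n))$. Using the $\Sigma^0_4(E)$-definition of $E^{\dpl\dpl}$, the body $n \in W_j \wedge m \, E^{\dpl\dpl} \, n$ is $\Sigma^0_4(E)$, since the $\exists t$ coming from $n \in W_j$ pairs with the leading existential of the $\Sigma^0_4(E)$-form. The inner $\exists n$ is similarly absorbed; the implication $m \in W_i \to \Sigma^0_4(E)$ stays in $\Sigma^0_4(E)$ (since $\Pi^0_1 \subseteq \Sigma^0_4$ and $\Sigma^0_4$ is closed under disjunction); and the outer $\forall m$ yields $\Pi^0_5(E) = \Pi^0_{2 \cdot 3 - 1}(E)$. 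The key economy is that a naïve $\Pi^0_4(E)$-bound on $E^{\dpl\dpl}$ would give only $\Pi^0_6(E)$; it is precisely the $\Sigma$-half of the $\Delta^0_4(E)$-bound provided by dark minimality that saves the needed alternation.

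The inductive step is the same calculation. Assume $E^{\dpl \hat k}$ is $\Pi^0_{2k-1}(E)$, of the form $\forall u_1 \exists u_2 \cdots \forall u_{2k-1} P$. Unfolding $(E^{\dpl \hat k})^\dpl$ as before, the body $n \in W_j \wedge m \, E^{\dpl \hat k} \, n$ prepends a single $\exists t$ in front of a $\Pi^0_{2k-1}(E)$-formula, giving a $\Sigma^0_{2k}(E)$-formula; absorbing $\exists n$, carrying the implication $m \in W_i \to \cdot$ (preserved in $\Sigma^0_{2k}(E)$), and applying the outer $\forall m$ produces $\Pi^0_{2k+1}(E) = \Pi^0_{2(k+1)-1}(E)$, as desired. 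For the lowness clause, if $E$ is low then $\Pi^0_n(E) = \Pi^0_n$ for every $n \geq 1$, so $E^{\dpl \hat k} \in \Pi^0_{2k-1}$. The only subtle step is the base case; past $k = 3$, dark minimality plays no further role and the bound propagates by routine bookkeeping.
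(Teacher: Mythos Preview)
Your proof is correct and follows essentially the same inductive approach as the paper, which compresses your explicit quantifier count into the remark that $E^{\dpl\dpl\dpl}$ is ``$\Pi^0_2$ over $E^{\dpl\dpl}$, which is $\Delta^0_4(E)$, so is $\Pi^0_5(E)$,'' and similarly for the step. One harmless slip: lowness gives $\Pi^0_n(E)=\Pi^0_n$ only for $n\ge 2$, not $n=1$, but since you only need $n=2k-1\ge 5$ this does not affect the argument.
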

\begin{proof}
	This is by induction with base case $k=3$:
	$E^{\dpl\dpl\dpl}$ is $\Pi^0_2$ over $E^{\dpl\dpl}$, which is $\Delta^0_4(E)$, so is $\Pi^0_5(E)$. Then $E^{\dpl \widehat{(k+1)}}=(E^{\dpl \hat k})^{\dpl}$ is $\Pi^0_2$ over $E^{\dpl \hat k}$ which is $\Pi^0_{2k-1}(E)$ by induction, so $E^{\dpl \widehat{(k+1)}}$ is $\Pi^0_{2(k+1)-1}(E)$.
\end{proof}

Below, in Corollary~\ref{Jumps of Id aren't simple}, we will show that $\Id^{\dpl \hat n}$ is not $\Pi^0_{2n-1}$. It follows from this that if $E$ is a low dark minimal ceer and $k>2$, then $\Id^{\dpl \hat k}\not\leq E^{\dpl \hat k}$. Thus we will have the following theorem.

\begin{thm}
	If $E$ is a low dark minimal ceer, then $E$ is not high$_{n}$ for the computable Friedman-Stanley jump for any $n\in \omega$. 
\end{thm}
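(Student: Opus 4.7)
The strategy is to combine a complexity bound on $E^{\dpl \hat k}$ with monotonicity of the $\dpl$-jump. Monotonicity is the routine observation that if $g$ computably reduces $A$ to $B$, then the map sending an index $e$ of a c.e.\ set $W_e$ to an index for $\{g(n) : n \in W_e\}$ computably reduces $A^{\dpl}$ to $B^{\dpl}$: the $A$-closure of $W_x$ agrees with the $A$-closure of $W_y$ if and only if the $B$-closures of their $g$-images agree, since $g$ is a reduction. Iterating this observation, any hypothetical reduction $\Id^{\dpl \hat k} \leq E^{\dpl \hat k}$ yields reductions $\Id^{\dpl \hat m} \leq E^{\dpl \hat m}$ for every $m \geq k$, so it suffices to refute the conclusion at one sufficiently large~$k$.

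Fix $k \geq 3$. By Corollary~\ref{cor: complexity of jumps of dark minimals}, $E^{\dpl \hat k}$ is $\Pi^0_{2k-1}(E)$, and since $E$ is low (so $E^{(n)} \leq_T \emptyset^{(n)}$ for every $n \geq 1$) and $2k-1 \geq 5$, this bound collapses to plain $\Pi^0_{2k-1}$. Suppose, for contradiction, that a computable $f$ witnesses $\Id^{\dpl \hat k} \leq E^{\dpl \hat k}$. Then the pullback $\{(x,y) : f(x) \mathrel{E^{\dpl \hat k}} f(y)\}$ is a $\Pi^0_{2k-1}$ set coinciding with $\Id^{\dpl \hat k}$, contradicting Corollary~\ref{Jumps of Id aren't simple}, which asserts that $\Id^{\dpl \hat k}$ is not $\Pi^0_{2k-1}$. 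Hence $\Id^{\dpl \hat k} \not\leq E^{\dpl \hat k}$ for every $k \geq 3$, and by the contrapositive of monotonicity the same conclusion extends to $k = 1$ and $k = 2$.

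All the substantive work is already done in the two invoked corollaries, so the present theorem is essentially a packaging remark. The main obstacle sits upstream rather than here: the upper bound on $E^{\dpl \hat k}$ rests on the defining feature of dark minimal ceers, namely that every c.e.\ set meets either finitely many $E$-classes or all of them, which is what allows each successive jump's quantifier complexity to be held to $\Pi^0_{2k-1}(E)$; and the corresponding lower bound on $\Id^{\dpl \hat k}$, proved later, supplies the matching separation. Together with the monotonicity observation, which lets one case of the complexity gap propagate downward to all lower iterates, these ingredients immediately yield the theorem.
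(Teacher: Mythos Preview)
Your proof is correct and follows essentially the same approach as the paper: combine the complexity bound of Corollary~\ref{cor: complexity of jumps of dark minimals} with the lower bound of Corollary~\ref{Jumps of Id aren't simple} to rule out $\Id^{\dpl\hat k}\leq E^{\dpl\hat k}$ for $k\geq 3$. Your explicit appeal to monotonicity of $\dpl$ to propagate the failure down to $k\in\{1,2\}$ is a welcome clarification, since the paper only argues the case $k>2$ directly and then asserts the theorem for all $n$ without spelling out this step.
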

%

We now see that the assumption of lowness is necessary here, since there are dark minimal ceers so that $\Id^{\dpl\dpl}\leq E^{\dpl\dpl}$.



\begin{thm}\label{dark minimal ceer high2}
	There is a dark minimal ceer $E$ so that $\Id^{\dpl\dpl}\leq E^{\dpl\dpl}$.
\end{thm}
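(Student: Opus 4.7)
The plan is to build a dark minimal ceer $E$ by a priority argument and, in the same construction, to define a computable reduction $h\colon \Id^{\dpl\dpl}\leq E^{\dpl\dpl}$. Alongside the usual dark-minimal requirements---every c.e.\ set hitting infinitely many classes hits every class, $E$ has infinitely many classes, and any ceer $R\leq E$ with infinitely many classes is equivalent to $E$---we maintain a $\Delta^0_2$-approximated sequence of ``registers'' $F(0),F(1),\ldots$ with computable approximations $F[s](k)$, arranged so that the limits $F(k)$ lie in pairwise distinct $E$-classes. At each stage $s$ the values $F[s](0),\ldots,F[s](s)$ are pairwise $E_s$-inequivalent; when a higher-priority dark-requirement action threatens to merge two register classes, we reassign the higher-indexed register to a fresh $E_s$-class.

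We would then define $h(e)$ by adapting the Clemens--Coskey--Krakoff template used for $E_A$. The set $h(e)$ is an index of a uniformly c.e.\ family of c.e.\ sets, comprising a ``bookkeeping'' sub-family ($\omega$ and a specified collection of finite sets) together with, for each tuple $(i,k,n,m)$, a set $V_{i,k,n}^m$. When $i$ enters $W_e$, the sets $V_{i,k,n}^0$ become active; while active and the length of agreement between $W_{i,s}$ and $W_{k,s}$ is at least $n$, each $V_{i,k,n}^m$ enumerates a finite canonical witness (built from the current $F$-register values) for the pair $(W_k\cap[0,n],n)$. If any register involved in the current witness changes its $\Delta^0_2$-approximation, $V_{i,k,n}^m$ is set equal to $\omega$ and $V_{i,k,n}^{m+1}$ is activated as a retry. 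In the limit, whenever $i\in W_e$ and $W_i=W_k$, some $V_{i,k,n}^m$ stabilizes to a set whose $E$-closure is a canonical element of $\mathrm{Fin}(\omega/E)$ coding $(W_k\cap[0,n],n)$; otherwise the set is absorbed into the bookkeeping part. Using a disjoint ``tag'' register sequence for the index $n$, distinct pairs $(S,n)$ yield distinct $E^\dpl$-classes, and---with sufficient care in the encoding---the $E^{\dpl\dpl}$-class of $h(e)$'s family recovers $\{W_k : k\in W_i\text{ for some }i\in W_e\}$ up to $=^{ce}$.

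The chief obstacle is twofold. First, we must show that the $\Delta^0_2$-register maintenance is compatible with dark minimality: $E$ being dark forbids a c.e.\ transversal, so $(F(k))_{k\in\omega}$ is inherently only $\Delta^0_2$; dark-requirement actions can always be routed to non-register classes, since at each stage only finitely many registers are live while by darkness any $W_e$ that meets infinitely many classes meets every class. Second, whereas in the $E_A$ case the target $\omega\setminus\{F(k)\}$ occupies a co-finite $E_A^\dpl$-class that no finite set can reach, for dark minimal $E$ every $E^\dpl$-class is a finite union of $E$-classes or is $\omega$, so our encoding must live entirely inside $\mathrm{Fin}(\omega/E)$. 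The core technical content is thus the design of the pair-with-tag encoding and the verification that the varying part of the family's $E^\dpl$-classes is neither absorbed by the fixed bookkeeping part nor conflated across distinct $\Id^{\dpl\dpl}$-classes.
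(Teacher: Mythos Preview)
Your sketch correctly identifies the shape of the construction and the two genuine difficulties, but it stops short of the key idea and leaves a real gap.

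The problem is the ``garbage'' case you flag at the end. Concretely: suppose $i\in W_e$, $W_i\neq W_k$, and the length of agreement between $W_{i,s}$ and $W_{k,s}$ temporarily reaches some $n$ larger than the true point of disagreement, then drops. Your $V_{i,k,n}^m$ will have enumerated a proper fragment of the canonical witness for $(W_k\cap[0,n],n)$. Your retry mechanism only fires on register changes, not on agreement drops, so this fragment persists. You assert it is ``absorbed into the bookkeeping part,'' but you never specify the bookkeeping family, and any reasonable choice runs into trouble: if bookkeeping contains all finite unions of register classes below some size, your real witnesses get absorbed too; if it contains only certain shapes, you must argue every possible fragment lands among them, and you give no mechanism for this. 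The same issue arises when $W_i=W_k$ but $V_{i,k,n}^m$ begins enumerating before all of $W_k\cap[0,n]$ has appeared. In short, a purely passive register scheme layered on top of $E$ does not obviously control what the failed attempts look like.

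The paper solves this by making the coding strategies \emph{active participants in building $E$}. For each $(j,k,\bar x)$ with $\bar x$ a $2k$-tuple, a requirement $\mathcal Q_{j,k,\bar x}$ enumerates a set $U$ and uses a $\Pi^0_2$ approximation to ``$W_j=W_k$'': when the guess says unequal, it adds a fresh element $y$ to $U$ (making $\lvert U/E\rvert$ odd); when the guess flips to equal, it \emph{$E$-collapses $y$ into $x_0$} (restoring $\lvert U/E\rvert=2k$). Thus in the limit $U=[\bar x]_E$ with $\lvert U/E\rvert=2k$ if $W_j=W_k$, and $\lvert U/E\rvert$ is odd (or $U=\omega$) otherwise. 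The bookkeeping family is then simply $\omega$ together with all $E$-closed sets of odd size, and the informative classes are exactly those of even size $2k$, read as ``$W_k$ is represented.'' The parity trick plus the willingness to cause $E$-collapse on behalf of the coding is the idea your proposal is missing.
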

\begin{proof}
	Along with the ceer $E$, we construct uniformly in each $j,k,\bar{x}\in \omega$, a finite sequence of c.e.\ sets $U^n_{j,k,\bar{x}}$ for $n\leq N(j,k,\bar{x})$ and satisfying the following:
	
\begin{itemize}	
	\item[$\mathcal{Q}_{j,k,\bar{x}}$]: For every $n<N(j,k,\bar{x})$, $U^n_{j,k,\bar{x}}=\omega$:

\medskip
	
\begin{itemize}
\item If $\bar{x}$ is a $2k$-tuple which is $E$-distinct and $W_j=W_k$, then
\[
U^{N(j,k,\bar{x})}_{j,k,\bar{x}}=[\bar{x}]_E.
\]
\item Otherwise, $\vert U^{N(j,k,\bar{x})}_{j,k,\bar{x}}/E\vert$ is odd or $U^{N(j,k,\bar{x})}_{j,k,\bar{x}}=\omega$.
\end{itemize}	
\end{itemize}
	
	From the success of these requirements, we give a reduction of $\Id^{\dpl\dpl}$ to $E^{\dpl\dpl}$. Given a uniformly c.e. family $\mathcal{V}_i=\{W_j \colon j\in W_i\}$, we map this to a family $\mathcal{F}_i$ which contains each set $U^n_{j,k,\bar{x}}$ for each $j\in W_i$, $k\in \omega$ and $\bar{x}\in \omega^{2k}$, and $n\leq N(j,k,\bar{x})$. We also include an enumeration of $\omega$ and sets $X_{\bar{x}}$ for every $\bar{x}$ of odd size where $X_{\bar{x}}$ enumerates $[\bar{x}]_E$ unless we see that $\bar{x}$ is not $E$-distinct, in which case $X_{\bar{x}}$ enumerates $\omega$. It is easy to check that the sets enumerated as $X_{\bar{x}}$ are exactly $\omega$ and every $E$-closed set $Y$ so that $\vert Y/E\vert$ is odd. Further, if $W_k$ is represented in $\mathcal{V}_i$, then there is some $j\in W_i$ so that $W_j=W_k$. In this case, $U^{N(j,k,\bar{x})}_{j,k,\bar{x}}$ for various $\bar{x}$ will enumerate every $E$-closed set $Y$ so that $\vert Y/E\vert$ has size $2k$. So, this gives the necessary reduction to witness that $\Id^{\dpl\dpl}\leq E^{\dpl\dpl}$.

	It remains to verify that we can construct a dark minimal ceer $E$ along with the uniform sequence of sets $U^n_{j,k,\bar{x}}$ satisfying the $\mathcal{Q}$-requirements. 
	
	
	We have the full set of requirements for $m,n,o,j,k\in \omega$ and $\bar{x}\in \omega^{2k}$
	
	\begin{itemize}
	\item[$\mathcal{I}_m$]: $E$ has at least $m$ equivalence classes.
	\item[$\mathcal{P}_{n,o}$]: If $W_n$ intersects infinitely many $E$-classes, then $W_n$ intersects $[o]_E$.
	\item[$\mathcal{Q}_{j,k,\bar{x}}$]: Enumerate a c.e.\ set $U$ so that:
	\begin{itemize}
		\item If $\bar{x}$ is not $E$-distinct, then $U=\omega$.
		\item If $\bar{x}$ is $E$-distinct and $W_j=W_k$, then $U=[\bar{x}]_E$.
		\item If $\bar{x}$ is $E$-distinct and $W_j\neq W_k$, then $\vert U/E\vert$ is odd.
	\end{itemize}
	\end{itemize}

We note that whenever a $\mathcal{Q}_{j,k,\bar{x}}$-requirement is reinitialized, we will let the constructed set be $\omega$ and have the strategy begin constructing a new set $U$. This explains the finite sequence of sets $U_{j,k,\bar{x}}^n$ and $N(j,k,\bar{x})$ will be the number of times this strategy is reinitialized.

We enumerate the strategies in order type $\omega$. Whenever a $\mathcal{P}$-strategy causes collapse, all lower-priority strategies are reinitialized.
	
	

	The strategies for $\mathcal{I}$ and $\mathcal{P}$-requirements are familiar from the usual construction of a dark minimal set: $\mathcal{I}$-requirements simply choose a new tuple and place restraint. 
%

	$\mathcal{P}_{n,o}$-strategies seek to find an element of $W_n$ which is not (currently) $E$-equivalent to any restrained number. Then it $E$-collapses this number with $o$.
	
	$\mathcal{Q}_{j,k,\bar{x}}$ strategies act as follows: If it ever sees some $x_i \mathrel{E} x_j$, then it just stops and makes $U=\omega$, and the requirement is satisfied. Nonetheless, the strategy restrains the tuple $\bar{x}$. We begin by enumerating $[\bar{x}]_E$ into $U$. We use the $\Pi^0_2$ approximation to the statement $W_j=W_k$. That is, at every stage, we have a computable guess as to whether or not $W_j=W_k$. If we infinitely often guess that $W_j=W_k$, then they are equal. When our guess switches from saying $W_j=W_k$ to saying that they are not equal, we take a new number $y$, and we add $y$ to $U$. Further, we place restraint on the number $y$ so that lower priority requirements will not collapse $y$ with any element of $\bar{x}$. If we later guess that $W_j=W_k$, then we collapse $y$ with $x_0$. We then undefine the parameter $y$ and unrestrain it (it is restrained automatically anyway by our restraint on $x_0$).
	
	
	

The construction is put together via standard finite injury machinery. At every stage $s$, the first $s$ strategies get to act in order.

\begin{lem}
	At every moment of the construction, the set of parameters of $y$ for various $\mathcal{Q}$-requirements and the set of restrained elements for $\mathcal{I}$-requirements are all $E$-distinct.
	
	At every moment of the construction, if $\mathcal{Q}_{i,j,\bar{x}}$ is higher priority than $\mathcal{Q}_{i',j',\bar{x}'}$, then the latter's parameter $y'$ (if defined) is not $E$-equivalent to any $x\in \bar{x}$.
\end{lem}
\begin{proof}
	These statements are preserved by the choice of parameters, since they are chosen new.
	Collapse occurs only via action from $\mathcal{P}$ or $\mathcal{Q}$-requirements. In the former case, $\mathcal{P}_{n,o}$ collapses some member $z$ of $W_n$ to $o$. This $z$ was not equivalent to any element restrained by a higher-priority requirement, and since all lower-priority requirements are reinitialized, we have added no restrained number to the class of $o$. Next we consider collapse caused by a $\mathcal{Q}_{j,k,\bar{x}}$-strategy. Since $\mathcal{Q}_{j,k,\bar{x}}$ previously restrained $y$, the inductive hypothesis shows that no other parameter $y'$ for a $\mathcal{Q}'$-requirement or an element restrained by an $\mathcal{I}$-requirement was equivalent to $y$. Since after the collapse of $y$ with $x_0$, this $y$ is no longer the parameter for $\mathcal{Q}_{j,k,\bar{x}}$, we have added no such element to the class of $x_0$. Thus the first statement is proved.
	
	It remains to see that a collapse caused by a $\mathcal{Q}_{i^0,k^0,\bar{x}^0}$-strategy does not cause a violation of the second statement. By the first statement, no two $y$-parameters could have been equivalent. So, the only way this could have caused the violation is if $x^0_0\mathrel{E}_s y'$ and $y^0\mathrel{E}_s x_0$. But by inductive hypothesis, the former implies $\mathcal{Q}_{i^0,k^0,\bar{x}^0}$ is lower priority than $\mathcal{Q}_{i',j',\bar{x}'}$ and the latter implies $\mathcal{Q}_{i,j,\bar{x}}$ is priority than $\mathcal{Q}_{i^0,j^0,\bar{x}^0}$. Thus we would have $\mathcal{Q}_{i,j,\bar{x}}$ being lower priority than $\mathcal{Q}_{i',j',\bar{x}'}$, so this is not a violation of the second statement after all.
\end{proof}

\begin{lem}
	Every strategy succeeds.
\end{lem}
\begin{proof}
	Since only $\mathcal{P}$-requirements reinitialized lower priority requirements, and each can act at most once, every requirement is reinitialized only finitely often.
	
	We first see that every $\mathcal{I}_m$-strategy succeeds. Take a stage after which the strategy is not reinitialized and consider the tuple restrained by the strategy. By the previous Lemma, each of its restrained elements are $E$-distinct, so the strategy succeeds.
	
	Next, consider a $\mathcal{P}_{n,o}$-strategy. Let $s$ be a stage large enough that the strategy is not reinitialized after stage $s$. Let $\bar{a}$ be the full tuple of elements restrained by higher-priority $\mathcal{I}$-strategies (which has settled by stage $s$). Let $\mathcal{Q}_{i_q,j_q,\bar{x}_q}$ for $q< K$ be the collection of higher-priority $\mathcal{Q}$-strategies.
	Suppose that $W_n/E$ is infinite, and let $t>s$ be a stage after which $W_n$ contains at least $\vert \bar{a}\cup\bigcup_{q<K}\bar{x}_q \vert + K+1$ $E$-distinct elements. At any such stage, at most $K$ $E_t$-classes are restrained as parameters $y$ by higher priority $\mathcal{Q}$-strategies, so there must be an unrestrained member of $W_{n,t}$ which the strategy will collapse with $o$ and thus be permanently satisfied.
	
	Finally, we consider a $\mathcal{Q}_{i,j,\bar{x}}$-strategy. We consider the three cases: If $\bar{x}$ is not $E$-distinct, then this is seen at some point and we set $U=\omega$. If $\bar{x}$ is $E$-distinct and $W_j=W_k$, then infinitely often, we add some $n$ to $U$, but then we collapse this $n$ in with $x_0$. So, $U=[\bar{x}]_E$. If $W_j\neq W_k$, then let $s$ be the least stage so that the strategy is not reinitialized after stage $s$ and the approximation says $W_j\neq W_k$ for all $t>s$. Let $y$ be the parameter chosen at stage $s$. Then we need only see that $y\notin [\bar{x}]_E$. We consider what strategy might cause this collapse. It cannot be a higher priority $\mathcal{P}$-requirement, since the strategy is not reinitialized after stage $s$. It cannot be lower priority $\mathcal{P}_{n,o}$-requirements since both $\bar{x}$ and $y$ are restrained by $\mathcal{Q}_{i,j,\bar{x}}$, so neither can be $E_t$-equivalent to the chosen element $z\in W_n$. It cannot be a $\mathcal{Q}$-requirement, since the lower-priority strategy's parameter $y$ cannot be equivalent to either the higher-priority strategy's $y$ or $x$, by the previous Lemma.
\end{proof}

This concludes the proof of Theorem~\ref{dark minimal ceer high2}.	
\end{proof}

\section{Dark jumps}\label{section: dark jumps}

In the remaining three sections, we move out from the realm of ceers and consider equivalence relations of higher complexity. In particular, we now ask how complex an infinite equivalence relation $E$ must be for its jump to be dark.
Clemens, Coskey, and Krakoff \cite[Theorem 4.2]{CCK} show that $E^\dpl$ is light for every infinite ceer $E$ 
and there are infinite $\Delta^0_4$ equivalence relations $E$ so that $E^\dpl$ is dark.
Here we prove that $\Sigma^0_2$ is the lowest arithmetical complexity of an equivalence relation $E$ such that $E^\dpl$ is dark (thus answering~\cite[Question~6]{CCK}).

\smallskip

First, we show that the jump of every infinite $\Pi^0_2$ equivalence relation is light.

\begin{thm}
	If $E\in \Pi^0_2$ is infinite, then $\Id \leq E^\dpl$.
\end{thm}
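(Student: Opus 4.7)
The plan is to construct a uniformly c.e.\ sequence $(V_n)_{n\in\omega}$ whose $E$-closures are pairwise distinct; the reduction $n \mapsto e_n$, where $e_n$ is an index for $V_n$, then witnesses $\Id \le E^\dpl$. Writing $xEy \iff \forall u\,\exists t\,R(x,y,u,t)$ for some computable $R$ exhibits $\cancel{E}$ as a $\Sigma^0_2$ set, hence c.e.\ relative to $\emptyset'$; we simulate this enumeration by finite injury.

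For each pair $n<m$, I would set up a requirement $R_{n,m}$ aiming to produce a witness $a_{n,m}\in V_n$ with $a_{n,m}\cancel{E}v$ for every $v\in V_m$. The strategy for $R_{n,m}$ maintains a current witness $a$, places it in $V_n$, and at each stage consults the $\emptyset'$-approximation to check whether $a$ appears $\cancel{E}$-separated from every $v\in V_{m,s}$; if the approximation later reverses on some $v$ and now suggests $aEv$, the strategy abandons $a$ and picks a fresh replacement. Because $E$ is infinite, the complement of the $E$-closure of any finite set is infinite, so suitable fresh candidates always exist. Lower-priority strategies that need to add elements into $V_m$ choose those elements to avoid $E$-equivalence (under the current approximation) with the finitely many currently committed higher-priority witnesses against $V_m$, using the same $\cancel{E}$-enumeration.

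The main obstacle is verifying that each strategy acts only finitely often. I expect this to proceed by induction on priority: once all higher-priority strategies have stabilized, the finite set of their committed witnesses is fixed, and the coordination rule prevents lower-priority insertions into $V_m$ from conflicting with them. The $\emptyset'$-approximation to the finitely many relevant pairs then eventually stabilizes, so $R_{n,m}$ changes witness only finitely often. The limiting sets $V_n$ are uniformly c.e., and each requirement is satisfied by its stable witness, yielding $[V_n]_E \ne [V_m]_E$ for all $n\ne m$. The argument uses the $\Pi^0_2$-ness of $E$ essentially (to enumerate $\cancel{E}$ via $\emptyset'$), which is exactly what fails in the $\Sigma^0_2$ counterexample the paper goes on to construct.
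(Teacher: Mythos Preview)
Your plan has a genuine gap in the termination argument. Each $V_m$ is fed by infinitely many strategies $R_{m,k}$ (one for every $k>m$), so $V_m$ is infinite. The requirement $R_{n,m}$ (with $n<m$) must keep its witness $a$ inequivalent to \emph{every} element of $V_m$, and hence can be injured by arbitrarily low-priority $R_{m,k}$'s --- the injury runs the wrong way for a finite-injury argument. Your coordination rule has each $R_{m,k}$ choose its element $b$ with $a\mathrel{\cancel{E}_s}b$ only under the \emph{current} approximation; but for a $\Pi^0_2$ relation (where $x\mathrel{E}y$ iff $x\mathrel{E_s}y$ infinitely often), this tells you nothing: you may have $a\mathrel{\cancel{E}_s}b$ at the moment of choice yet $a\mathrel{E}b$ in truth, or $a\mathrel{\cancel E}b$ in truth yet $a\mathrel{E_t}b$ at sporadic later stages $t$. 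Either way $R_{n,m}$ abandons $a$. Since infinitely many new elements enter $V_m$, you have not ruled out infinitely many abandonments; the inductive sentence ``once higher-priority strategies stabilize, the coordination rule prevents conflicts'' is precisely what fails, because coordination under an unreliable approximation does not prevent injury.

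The paper's proof avoids all of this with a direct, injury-free construction of a strictly $\subsetneq_E$-increasing chain of \emph{finite} initial segments. Using an approximation $E_s$ with $x\mathrel{E}y \Leftrightarrow (\exists^\infty s)\, x\mathrel{E_s}y$, set $W_{i_0}=\{0\}$ and
\[
x\in W_{i_{j+1}} \iff (\forall y<x)(\exists s\ge x)(\exists z\in W_{i_j})\ y\mathrel{E_s}z.
\]
An easy induction shows each $W_{i_j}$ is a finite initial segment (any $y\notin[W_{i_j}]_E$ blocks all sufficiently large $x$ from entering $W_{i_{j+1}}$), and the least $y\notin[W_{i_j}]_E$ lies in $W_{i_{j+1}}$, giving $W_{i_j}\subsetneq_E W_{i_{j+1}}$. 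Finiteness of each set is exactly what makes the approximation tractable: only finitely many pairs ever need to settle.
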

\begin{proof}
	We let $E_s$ be computable approximations to $E$ so that $x E y$ if and only if there are infinitely many stages $s$ so that $x \mathrel{E_s} y$.
	We construct a uniform sequence of c.e. sets $W_{i_j}$, for $j\in \omega$, so that $W_{i_j}\subsetneq_E W_{i_{j+1}}$, for each $j\in \omega$.
	
	We let $W_{i_0}=\{0\}$. We define $W_{i_{j+1}}$ as follows:
\[	
	x\in W_{i_{j+1}} \mbox{ if and only if } (\forall y< x)(\exists s\geq x)( \exists z\in W_{i_j})(y\,{E_s}\, z).
	\]
	
	\begin{lem}
		If $[0,x)\subseteq_{E} W_{i_j}$, then $x\in W_{i_{j+1}}$.
	\end{lem}
	\begin{proof}
		For each $y< x$, there is a $z\in W_{i_j}$ so that $y\mathrel{E} z$. Thus for infinitely many $s$ we have $y\mathrel{E_s} z$, witnessing $x\in W_{i_{j+1}}$. 
	\end{proof}
	
	\begin{lem}
		Each $W_{i_j}$ is a finite initial segment of $\omega$.
	\end{lem}
	\begin{proof}
		We prove this by induction. This is true for $j=0$.
		
		Fix an element $y\notin [W_{i_j}]_E$. This exists because $W_{i_j}$ is finite and $E$ has infinitely many classes. Then let $s$ be a stage large enough that $W_{i_j}=W_{i_j,s}$ and every $z\in W_{i_j}$ and $t>s$ we have $y\mathrel{\cancel{E_t}} z$. Then no $x>s$ can ever enter $W_{i_{j+1}}$. 
	\end{proof}

	It follows that $W_{i_j}\subsetneq_E W_{i_{j+1}}$ for each $j$, so $j\mapsto i_j$ is a reduction of $\Id$ to $E^\dpl$.
\end{proof}

On the other hand, there are $\Sigma^0_2$ sets whose jumps are dark.

\begin{thm}\label{sigma2 eqrel with dark jump}
	There exists an infinite $\Sigma^0_2$ equivalence relation $E$ so that $\Id\not\leq E^\dpl$.
\end{thm}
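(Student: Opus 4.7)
The plan is to construct $E$ as the equivalence closure of a set of pairs enumerated with oracle $\emptyset'$. A $\emptyset'$-c.e.\ relation is $\Sigma^0_2$, and equivalence closure stays in $\Sigma^0_2$, so the resulting $E$ will have the required complexity. We arrange two families of requirements: $\mathcal{I}_n$, which demands that $E$ has at least $n$ classes and is handled by preserving $n$ pairwise $E$-distinct representatives throughout; and, crucially, $\mathcal{R}_e$, which states that if $\phi_e$ is total then $[W_{\phi_e(i)}]_E = [W_{\phi_e(j)}]_E$ for some $i\neq j$.

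For each $e$ we reserve an infinite family of disjoint ``attack pairs'' $(i^e_k, j^e_k)_{k\in\omega}$. Once $a := \phi_e(i^e_k)$ and $b := \phi_e(j^e_k)$ are detected (using $\emptyset'$), we enumerate $W_a$ and $W_b$ in order and, for each $t\geq 1$, merge the $t$-th element of $W_a$ with the $t$-th element of $W_b$ into the same $E$-class; any ``extra'' elements from the longer side are merged into the class of the first matched pair. A direct check shows that whenever both $W_a$ and $W_b$ are nonempty, this yields $[W_a]_E = [W_b]_E$, so $(i^e_k, j^e_k)$ witnesses $\mathcal{R}_e$. The empty-set case is absorbed by abundance: a reduction must send at most one input to $\emptyset$, so only finitely many attacks can have just one empty side; if two attacks have both sides empty then $\mathcal{R}_e$ is already satisfied through the colliding empty closures, and a single attack with both sides empty also trivially gives $[W_a]_E = [W_b]_E = \emptyset$.

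The main obstacle is the priority interaction: a single element may lie in $W_{a_{e,k}}\cup W_{b_{e,k}}$ for many attacks at once, and the merges performed by higher-priority attacks may force $E$-identifications that a lower-priority attack cannot predict. We handle this with an $\omega$-ordering on attacks, letting the highest-priority merges act first and having lower-priority attacks restrict their matching to elements whose classes have not yet been determined. The combinatorial heart of the verification is to show that for each $e$ some attack $(e,k)$ eventually runs essentially unhindered by the finitely many higher-priority attacks active at the relevant stage; this uses a pigeonhole argument across the infinite family of attacks, together with the observation that the set of elements permanently claimed by higher-priority attacks is $\Sigma^0_1$ in $\emptyset'$. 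Infinitely many $E$-classes are maintained by reserving, at each stage, new representatives guarded from future merges by the priority structure, satisfying each $\mathcal{I}_n$ in turn.
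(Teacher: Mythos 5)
Your overall architecture is sound and matches the paper's in broad strokes: build $E$ by a finite-injury construction over $\mathbf{0}'$, with infiniteness requirements and diagonalization requirements $\mathcal{R}_e$ that force two images of a putative reduction to have equal $E$-closures. The empty-set cases are also dispatched correctly. But there is a genuine gap in the core merging strategy, and it is precisely the point where the paper's proof does the real work.

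The issue is the interaction between a merge for $\mathcal{R}_e$ and the classes restrained by higher-priority $\mathcal{I}_n$ (equivalently, the paper's $\mathcal{Q}$-requirements). You cannot ever shrink an $E$-class in a $\Sigma^0_2$ construction, so a restrained class $[x]_E$ is permanent; and whether $[W_a]_E$ contains $x$ is therefore determined the moment $W_a$ intersects $[x]_E$. If the attack pair $(a,b)$ has $W_a$ intersecting $[x]_E$ but $W_b$ not, then achieving $[W_a]_E=[W_b]_E$ forces you to merge some element of $W_b$ into $[x]_E$, and if the pattern is crossed on a second restrained class $[y]_E$ (i.e., $W_b$ meets $[y]_E$ but $W_a$ does not), then the pairwise matching of $t$-th elements will at some point merge an element of $[x]_E$ with an element of $[y]_E$, collapsing two restrained classes and killing $\mathcal{I}_n$. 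Your proposed remedy (``restrict matching to elements whose classes have not yet been determined'') just trades one failure for the other: if you refuse to merge those elements, $[W_a]_E\neq[W_b]_E$ on the restrained classes and $\mathcal{R}_e$ fails. The pigeonhole you invoke -- over the infinitely many attack pairs, to escape interference from higher-priority attacks -- is about the wrong obstruction; you have no control over which restrained classes $W_{\phi_e(i^e_k)}$ meets, and an adversarial $\phi_e$ can make every attack pair crossed. The paper handles exactly this by first collecting $3\cdot 2^{|\bar x|}+1$ candidate indices and pigeonholing on their ``type'' with respect to the $|\bar x|$ restrained classes to obtain four (hence a usable three) whose $W$-sets already agree on all restrained classes; only then is any merge attempted. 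Nothing in your sketch plays this role.

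Two further points. First, the ``extras merged into the class of the first matched pair'' clause is dangerous: if $W_a$ is finite and $W_b$ is cofinite, this creates a single class containing almost everything, including whatever was restrained, again destroying the $\mathcal{I}_n$'s; your abundance argument only handles the empty case, not finite sets, and finite sets need not be rare among the images. Second, the paper's device of performing all collapses on intervals $[z,n]$ keeps the $E$-classes as intervals at every stage, which is what lets restrained elements be chosen as maxima of their classes and then shown never to have their class grow (and also lets a \texttt{FoundFiniteSet} module salvage the two-finite case). Your sketch has no comparable invariant, and without one I do not see how to verify that a reservation ``guarded by the priority structure'' actually survives the merges performed by both higher- and lower-priority $\mathcal{R}$-strategies, which must keep acting forever when both target sets are infinite.
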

\begin{proof}
	We construct $E$ as a c.e. set via a finite injury argument over $\mathbf{0}'$. We have requirements:
	
	\begin{itemize}
		\item[$\mathcal{R}_i$]: If $W_i$ is infinite, then it contains two entries which are $E^{\dpl}$-equivalent.
		\item[$\mathcal{Q}_j$]: There are $x_1,\ldots x_j$ which are $E$-inequivalent.
	\end{itemize}

The $\mathcal{R}$-requirements ensure that there is no reduction from $\Id$ to $E^{\dpl}$, while the $\mathcal{Q}$-requirements obviously ensure that $E$ is infinite.
We place these requirements in order-type $\omega$. A $\mathcal{Q}$-requirement acts by placing a restraint. At every stage $s$, we allow the first $s$ requirements to act in turn. In fact, $\mathcal{R}$-requirements may act at infinitely many stages and cause infinitely many $E$-collapses.

The strategy for an $\mathcal{R}_n$-requirement is as follows:
Let $\bar{x}$ be the tuple of elements restrained by higher-priority $\mathcal{Q}$-requirements. Using $\mathbf{0}'$, we seek a set $\mathcal{I}$ of $3\cdot 2^{\abs{\bar{x}}}+1$ numbers in $W_n$. If there are not this many, then $W_n$ is not infinite and the requirement is satisfied. From these numbers, we use $\mathbf{0}'$ to find four that agree on the (current) classes of $\bar{x}$. That is, for each of these $3\cdot 2^{\abs{\bar{x}}}+1$ indices $j\in W_n$ and $x\in \bar{x}$, we use $\mathbf{0}'$ to ask if any member (there will be only finitely many) of $[x]_{E_s}$ is in $W_j$. Then by the pigeon-hole principle, there are four that give the same answer for every $x\in \bar{x}$. Fix these indices: $j,k,l,m$. If there are two indices $i,i'\in \{j,k,l,m\}$ so that $W_i$ and $W_{i'}$ are contained in $[\bar{x}]_{E_s}$, then $\mathbf{0}'$ sees this and the requirement will be automatically satisfied, so no further action is taken. So, we may suppose $W_j,W_k,W_l$ are each not contained in $[\bar{x}]_{E_s}$.
Note that the family $\{W_j,W_k,W_l\}$ must contain two finite sets or two infinite sets.
We begin with working with the pair $j,k$ and, until proven otherwise, we guess that $W_j$ and $W_k$ are both infinite.

Then, we perform the following \texttt{Collapse}($j,k$) module: 

\begin{itemize}
\item[] At each stage $s$ greater than every $x\in \bar{x}$, we ask $\mathbf{0}'$ if there is a $y\geq s$ which is in $W_j$ and we ask if there is a $y\geq s$ which is in $W_k$. We distinguish two cases.
\begin{enumerate}
\item\label{foundfinitecase} If the answer is no to either, then we stop this module and we call the \texttt{FoundFiniteSet} module instead. 
\item Assuming case (\ref{foundfinitecase}) didn't happen, we now act to ensure that every $z<s$ is either  in both or neither of $[W_j]_{E_s}$ and $[W_k]_{E_s}$. We act successively for each $z\in (\max(\bar{x}),s)$. If $z$ is not least in its $E_s$-equivalence class, then we have already ensured this when previously considering a number $y<s$ which is $E_s$-equivalent to $z$, so we do nothing. Otherwise, we ask $\mathbf{0}'$ if $z\in [W_j]_{E_s}$ and if $z\in [W_k]_{E_s}$. 
\begin{enumerate}
\item If it is in neither or both, we do no action. 
\item If it is in one and not the other, then we find the least $n>s$ in the other set and we $E$-collapse the interval $[z,n]$. 
\end{enumerate}
\end{enumerate}
\end{itemize}

We now describe the \texttt{FoundFiniteSet} module:
\begin{enumerate}
\item If this is the first time we call this procedure, say having found that $W_j$ is finite, then we simply return to the \texttt{Collapse}($k,l$) module (we just assume $W_k$ and $W_l$ are infinite until we see otherwise).
\item If this is the second time we call this procedure, say having found that $W_j$ and $W_k$ are finite, then we simply collapse $[\max(\bar{x})+1,\max(W_j,W_k)]$ to a single $E$-class.
\end{enumerate}



Note that since every collapse involves an interval, the classes of $E$ are intervals as well.

A $\mathcal{Q}_j$ strategy acts as follows: Let $\bar{x}$ be the tuple restrained by $\mathcal{Q}_{j-1}$ (or $\bar{x}=\emptyset$ if $j=0$). Wait to find a stage $s$ and a number $y<s$ so that $y$ is the greatest element of $[\max(\bar{x}+1))]_{E_s}$ and $[y]_{E_s}=[y]_{E_{s-1}}$.
 Once such a $y$ is found, the strategy places a restraint on the tuple $\bar{x}y$.


The strategies are interwoven in priority order: $\mathcal{R}_0<\mathcal{Q}_0<\mathcal{R}_1<\mathcal{Q}_1 < \cdots$. Whenever an $\mathcal{R}$-strategy runs a \texttt{FoundFiniteSet} module, all lower priority strategies are reinitialized. This is the only source of injury. At each stage $s$, we allow the requirements to act in order until one of them ends the stage. A $\mathcal{Q}_j$-strategy which is still waiting to find a $y$ or which acts by declaring its restraint $\bar{x}y$ ends the stage, and a $\mathcal{R}_n$-strategy which runs a \texttt{FoundFiniteSet} module ends the stage.

\begin{lem}
	Suppose that a $\mathcal{Q}_k$ strategy restrains a tuple $\bar{x}y$ at stage $s$, and $t>s$. Then either the strategy has been reinitialized between stages $s$ and $t$ or $\bar{x}y$ are the largest members of the first $\vert \bar{x} y\vert$ $E_t$-equivalence classes.	
	In particular, $[z]_{E_s}=[z]_{E_t}$ for every $z\in \bar{x}y$.
%
\end{lem}
\begin{proof}
	The result holds by induction for every $x_i\in \bar{x}$. Namely, $\bar{x}$ is restrained by the strategy $\mathcal{Q}_{j-1}$ at a stage $r<s$. By inductive hypothesis applying the claim to the $\mathcal{Q}_{j-1}$-strategy, $x_i$ is the greatest number in the $i+1$th $E_t$-class as needed. We must consider the $E$-class of $y$. Since $y \mathrel{E_s} \max(\bar{x})+1$, we need only show that as long as the $\mathcal{Q}_k$ has not been reinitialized, no number $>y$ ever becomes equivalent to $y$. 
	
	Since $[y]_{E_{s-1}}=[y]_{E_s}$, each higher priority $\mathcal{R}$-strategy (without loss of generality, suppose it is running the \texttt{Collapse}($j,k$) module) has considered the class $[y]_{E_s}$ on its previous pass and found that it intersected either both or neither of $W_j$ and $W_k$. Thus, at any future stage $t>s$ where $[y]_{E_t}=[y]_{E_s}$, as long as the strategy remains in the \texttt{Collapse}($j,k$) module, this strategy will never have a need to collapse any element with $y$. If the strategy takes the \texttt{FoundFiniteSet} module, then the $\mathcal{Q}_k$-strategy is reinitialized and the desired result holds. Thus, no higher priority strategy can ever cause the $E$-class of $y$ to grow.
			
	Consider the collapses caused by lower-priority $\mathcal{R}$-strategies at a stage $t>s$ and suppose that we have $[y]_{E_t}=[y]_{E_s}$. 
	The strategy collapses 
	finite intervals of numbers $[z,n]$ which are greater than the largest element in the restrained tuple. Since $y$ is the largest number in its $E_t$-equivalence class, 
	no number in this finite interval can be equivalent to $y$, 
	so this collapse does not add any elements to $y$'s $E$-class.
\end{proof}

\begin{lem}
	Each strategy is satisfied.
\end{lem} 

\begin{proof}
	Each strategy may injure lower priority requirements at most twice (each time it runs the \texttt{FoundFiniteSet} module), so every strategy is reinitialized only finitely often. 
	
	Suppose towards a contradiction that the first strategy that fails is a $\mathcal{R}_n$-strategy. Fix $\bar{x}$ to be the numbers restrained by higher-priority $\mathcal{Q}$-strategies. Then $\mathcal{R}_n$ begins by choosing indices $j,k,l$. Note that for any $x\in \bar{x}$, we have $[x]_{E_s}\cap W_j=\emptyset \leftrightarrow [x]_{E_s}\cap W_k=\emptyset \leftrightarrow [x]_{E_s}\cap W_l=\emptyset$ where $s$ is the stage when $j,k,l$ were chosen after the last time the $\mathcal{R}_n$-strategy is reinitialized. But by the previous claim, 	
	$[x]_{E_s}=[x]_E$, so 
\[	
	[x]_{E}\cap W_j=\emptyset \Leftrightarrow [x]_{E}\cap W_k=\emptyset\Leftrightarrow [x]_{E}\cap W_l=\emptyset.
	\]
	 So, on these classes, the three sets agree.
	
	First suppose that both of $W_j$ and $W_k$ are infinite. We now check that the \texttt{Collapse}$(j,k)$ module ensures that $W_j=_E W_k$. Fix $z>\max(\bar{x})$ (i.e., a class distinct from the ones considered above) and suppose that $z\in [W_j]_E$. Then at some stage $s>z$ we have $z\in [W_j]_{E_s}$. Then at this stage, we ensure that $z\in [W_k]_{E_s}$.  This covers every class by the previous claim, so $j\mathrel{E^\dpl}k$ satisfying the $\mathcal{R}_n$ requirement.
	
	Similarly, if exactly one of $W_j$ or $W_k$ is finite (without loss of generality, assume it is $W_j$), 
	and $W_l$ is infinite then the \texttt{Collapse}$(k,l)$ module ensures that $W_k=_E W_l$. If two of the sets, say $W_j$ and $W_k$ are finite, then the \texttt{FoundFiniteSet} module ensures that $W_j=_E W_k$ since they must both intersect the class of $\max(\bar{x})+1$ (since they were chosen to not be contained in $[\bar{x}]_{E_s}=[\bar{x}]_{E}$) and no larger class. Thus, the strategy succeeds after all.

	Next, suppose towards a contradiction that $\mathcal{Q}_j$ is the first strategy that fails. From the above lemma, we need only show that the wait to find a $y$ as needed must end. At each stage $t$, let $y_t=\max([\max(\bar{x})+1]_{E_t})$. This would work for our choice of $y$ unless $[\max(\bar{x})+1]_{E_t}\neq [\max(\bar{x})+1]_{E_{t-1}}$. This can only happen due to the action of a higher priority $\mathcal{R}$-requirement, since $\mathcal{Q}_j$ ends the stage since it is waiting to find its $y$. We can suppose, without loss of generality, that the higher priority strategy is in a \texttt{Collapse}$(j,k)$ module, since the \texttt{Collapse}$(k,l)$ module is symmetric and it can run the \texttt{FoundFiniteSet} module at most twice. Then growing the $E$-class of $\max(\bar{x})+1$ must be because $\max(\bar{x})+1$ was seen to be in exactly one of $[W_j]_{E_{t-1}}$ or $[W_k]_{E_{t-1}}$. But this can happen only once in the \texttt{Collapse}$(j,k)$ module, since after stage $t$ it is in both. Thus, after finitely many stages, we must have $[\max(\bar{x})+1]_{E_t}=[\max(\bar{x})+1]_{E_{t-1}}$ and $\mathcal{Q}_j$ can choose its element $y$.
\end{proof}
This concludes the proof of Theorem~\ref{sigma2 eqrel with dark jump}.
\end{proof}

\section{Jumps depend on notations}\label{section: jumps depend on notations}

We now consider the transfinite jump hierarchy. Clemens, Coskey, and Krakoff \cite[Question 3]{CCK} ask whether the degree of $E^{\dpl a}$ depends on the notation $a\in \O$ or only the ordinal $\abs{a}$. We show that it does indeed depend on the notation, but we give a bound on how much it can depend on the notation.

\begin{notation}
	To avoid having towers of exponentials to represent successor ordinals, we introduce the function $P(x)=2^x$ and we write $P^{(k)}(x)$ for the $k$th iterate of the function $P$ on $x$. Note that if $n$ is a notation for the ordinal $\alpha$, then $P^{(k)}(n)$ is a notation for the ordinal $\alpha+k$.
\end{notation}

The following observation follows directly from the definitions.

\begin{obs}\label{obs:Uniform reduction to your own jump}
	For any notations $a<_{\mathcal{O}} b$, there is a computable function $f_{a,b}$ so that $f_{a,b}$ witnesses $E^{\dpl a}\leq E^{\dpl b}$ for any equivalence relation $E$. Further, $f_{a,b}$ can be uniformly found from the notations $a$ and $b$.
\end{obs}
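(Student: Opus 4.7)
The plan is to prove this by effective transfinite induction on the notation $b$, with the sole atomic ingredient being the fact that for every equivalence relation $R$ the map $\iota$ sending $x$ to a canonical c.e.\ index for $\{x\}$ is a computable reduction $R \leq R^{\dpl}$, since $x \mathrel{R} y$ iff $[\{x\}]_R = [\{y\}]_R$. Crucially, this $\iota$ does not depend on $R$, so the very same computable function serves whenever a one-step jump is needed, which is what keeps the whole construction uniform.

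For the successor case $b = 2^c$, so that $E^{\dpl b} = (E^{\dpl c})^{\dpl}$, the assumption $a <_{\O} b$ splits into $a = c$ or $a <_{\O} c$. In the first, set $f_{a,b} = \iota$. In the second, compose the inductively given $f_{a,c}$ with $\iota$ to get a reduction $E^{\dpl a} \leq E^{\dpl c} \leq (E^{\dpl c})^{\dpl} = E^{\dpl b}$. For the limit case $b = 3 \cdot 5^e$, so that $E^{\dpl b} = \oplus_i E^{\dpl \phi_e(i)}$, the assumption $a <_{\O} b$ means $a \leq_{\O} \phi_e(i)$ for some $i$, which can be located by dovetailing through the c.e.\ relation $\leq_{\O}$ on notations. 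Then post-compose the inductively given $f_{a, \phi_e(i)}$ (or the identity, if $a = \phi_e(i)$) with the injection $x \mapsto \langle i, x\rangle$ that embeds the $i$th summand into the uniform join; this map is a reduction of $E^{\dpl \phi_e(i)}$ into $\oplus_j E^{\dpl \phi_e(j)}$ directly from the definition of the join.

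Uniformity in $(a,b)$ is secured via Kleene's recursion theorem: fix once and for all an index for the case-analysis procedure above, and have the procedure make its recursive calls $f_{a,c}$ and $f_{a,\phi_e(i)}$ by using that same index. I do not anticipate any real obstacle, which is presumably why the result is recorded as an observation rather than a lemma — the entire argument is a direct unpacking of the definitions of $\dpl$, of the uniform join, and of $<_{\O}$. The one mild point of care is that the search for the summand index $i$ in the limit case must use the c.e.\ nature of $\leq_{\O}$ restricted to notations, since $\leq_{\O}$ is not decidable; however, the promise $a <_{\O} b$ guarantees such an $i$ exists, so the dovetail halts.
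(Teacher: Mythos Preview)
Your proposal is correct and is exactly the natural unpacking of the definitions that the paper has in mind; the paper itself offers no proof beyond the remark that the observation ``follows directly from the definitions,'' and your effective transfinite recursion on $b$ with the uniform one-step map $\iota$ and the column-injection into the uniform join is precisely that unpacking.
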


The following lemma will be used to manage possible reductions into $E^{\dpl a}$ where $\vert a \vert$ is a limit ordinal.

\begin{lem}\label{lem:computably inseparable classes}
	For any equivalence relation $E$, the classes of $E^\dpl$ are computably inseparable.
\end{lem}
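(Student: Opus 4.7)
The plan is a direct application of the recursion theorem. Recall that two $E^\dpl$-classes $C_1, C_2$ are represented by c.e.\ sets: $C_1 = \{i : [W_i]_E = [W_a]_E\}$ and $C_2 = \{j : [W_j]_E = [W_b]_E\}$ for any chosen representatives $a \in C_1$ and $b \in C_2$, and $C_1 \neq C_2$ means $[W_a]_E \neq [W_b]_E$. The goal is to show that no computable set can separate $C_1$ from $C_2$.

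I would argue by contradiction. Suppose $S$ is a computable set with $C_1 \subseteq S$ and $C_2 \cap S = \emptyset$. Fix representatives $a \in C_1$ and $b \in C_2$. By the recursion theorem, obtain an index $e$ that we control, with the following enumeration rule for $W_e$: since $S$ is computable, we can decide whether $e \in S$; if $e \in S$, we enumerate $W_e := W_b$, and if $e \notin S$, we enumerate $W_e := W_a$.

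Now we derive a contradiction in either case. If $e \in S$, then $W_e = W_b$, so $[W_e]_E = [W_b]_E$, whence $e \mathrel{E^\dpl} b$ and therefore $e \in C_2$; but $C_2 \cap S = \emptyset$, contradicting $e \in S$. If $e \notin S$, then $W_e = W_a$, so $e \mathrel{E^\dpl} a$ and $e \in C_1 \subseteq S$, contradicting $e \notin S$. In either case we reach a contradiction, so no such computable separator $S$ exists. Since $C_1, C_2$ were arbitrary distinct classes, the classes of $E^\dpl$ are pairwise computably inseparable.

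The only subtlety (not really an obstacle) is that the definition of $W_e$ must be a legitimate c.e.\ enumeration: since $S$ is computable, the predicate ``$e \in S$'' is decided in finite time, and from that point on we just copy the enumeration of $W_a$ or of $W_b$, so $W_e$ is indeed c.e.\ and the recursion theorem applies in the standard way.
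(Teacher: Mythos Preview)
Your proposal is correct and follows essentially the same approach as the paper's proof: both argue by contradiction using the recursion theorem to build an index $e$ that enumerates the ``wrong'' c.e.\ set depending on which side of the putative computable separator $e$ lies. Your write-up simply spells out the two cases of the contradiction more explicitly than the paper does.
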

\begin{proof}
	Suppose towards a contradiction that $[i]_{E^\dpl}$ and $[j]_{E^\dpl}$ are separated by the computable set $A$. That is, $[i]_{E^\dpl}\subseteq A$ and $[j]_{E^\dpl}\cap A = \emptyset$.
		By the recursion theorem, we can take an index $e$ so that $W_e=W_i$ if $e\notin A$ and $W_e = W_j$ if $e\in A$. In either case, this gives a contradiction.
\end{proof}

We first consider ordinals $<\omega^2$, and show that the notation does not matter in this case.

\begin{lem}\label{All good below omega squared}
	Let $\alpha$ be an ordinal $<\omega^2$ and $a,b\in \mathcal{O}$ have $\abs{a}=\abs{b}=\alpha$. Then for any $E$, we have $E^{\dpl a}\equiv E^{\dpl b}$.
\end{lem}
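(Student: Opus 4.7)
The plan is to prove this by induction on $\alpha$, with a strengthened inductive hypothesis: for all notations $c, d$ with $|c| \leq |d| \leq \alpha$, there is a computable reduction $E^{\dpl c} \leq E^{\dpl d}$. The bidirectional conclusion of the lemma follows from applying this in both directions when $|c| = |d|$. The base case $\alpha = 0$ is trivial. For the successor step $\alpha = \beta + 1$, every notation for $\alpha$ must be of the form $2^{a'}$ with $|a'| = \beta$, since a limit notation $3 \cdot 5^e$ makes the $|\phi_e(i)|$ strictly increase and thus forces the supremum to be a limit. Given notations $c$ and $d = 2^{d'}$ with $|c| \leq |d|$: if $|c| \leq |d'|$, the inductive hypothesis yields $E^{\dpl c} \leq E^{\dpl d'}$, and Observation~\ref{obs:Uniform reduction to your own jump} gives $E^{\dpl d'} \leq E^{\dpl d}$; if $|c| = |d|$, then $c = 2^{c'}$ with $|c'| = |d'|$, and the inductive hypothesis together with the jump functor (sending an index of $W_i$ to an index of the image under a given reduction) lifts $E^{\dpl c'} \leq E^{\dpl d'}$ to $E^{\dpl c} \leq E^{\dpl d}$.

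The substantive case is the limit step $\alpha = \omega \cdot n$ with $1 \leq n < \omega$. The main ingredient is a structural fact about notations for $\omega \cdot n$: for any notation $a = 3 \cdot 5^e$ of $\omega \cdot n$, there exist $i_0 \in \omega$, a single notation $c_a$ with $|c_a| = \omega \cdot (n-1)$ (or $c_a = 1$ when $n = 1$), and a strictly increasing sequence $(k_i)_{i \geq i_0}$ cofinal in $\omega$ such that $\phi_e(i) = P^{(k_i)}(c_a)$ for all $i \geq i_0$. The key point is the uniqueness of $c_a$: once $|\phi_e(i)| \geq \omega \cdot (n-1)$, write $\phi_e(i) = P^{(k_i)}(c_i)$ with $c_i$ a notation for $\omega \cdot (n-1)$. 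The condition $\phi_e(i) <_\mathcal{O} \phi_e(i+1)$ places both $c_i$ and $c_{i+1}$ among the $<_\mathcal{O}$-predecessors of $\phi_e(i+1)$, which form a well-order of length $|\phi_e(i+1)|$; the downward-closed sets $\{d \leq_\mathcal{O} c_i\}$ and $\{d \leq_\mathcal{O} c_{i+1}\}$ are initial segments of this well-order of the same successor order type $\omega \cdot (n-1) + 1$, so they must coincide, yielding $c_i = c_{i+1}$.

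Given this decomposition for both $a = 3 \cdot 5^e$ and $b = 3 \cdot 5^{e'}$, with bases $c_a, c_b$ and sequences $(k_i)_{i \geq i_0}$, $(k'_j)_{j \geq j_0}$, the reduction $E^{\dpl a} \leq E^{\dpl b}$ is assembled as follows. By the inductive hypothesis at $\omega \cdot (n-1)$, fix a reduction $g: E^{\dpl c_a} \leq E^{\dpl c_b}$; iterated lifting via the jump functor produces $g_k: E^{\dpl P^{(k)}(c_a)} \leq E^{\dpl P^{(k)}(c_b)}$ for every $k \in \omega$. On input $\langle i, x\rangle$ with $i \geq i_0$, extract $k_i$ by syntactically stripping the outer $2^{\cdot}$ layers of $\phi_e(i)$ until the base $c_a$ is reached, compute $j(i)$ as the least unused $j \geq j_0$ with $k'_j \geq k_i$ (which exists since $k'_j \to \infty$ and so ensures $j(i)$ is injective in $i$), and output $\langle j(i), y\rangle$ where $y$ is $g_{k_i}(x)$ composed with the Observation~\ref{obs:Uniform reduction to your own jump} reduction coming from $P^{(k_i)}(c_b) \leq_\mathcal{O} P^{(k'_{j(i)})}(c_b) = \phi_{e'}(j(i))$. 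The finitely many $i < i_0$ have $|\phi_e(i)| < \omega \cdot (n-1) = |c_b|$, so the strengthened inductive hypothesis together with Observation~\ref{obs:Uniform reduction to your own jump} provides reductions $E^{\dpl \phi_e(i)} \leq E^{\dpl \phi_{e'}(j)}$ into distinct unused slots. The reverse reduction $E^{\dpl b} \leq E^{\dpl a}$ is obtained symmetrically.

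The main obstacle is the structural lemma on notations for $\omega \cdot n$, which rests on the well-ordering of the $<_\mathcal{O}$-predecessor chain and the rigidity of successor order types; the argument breaks down at $\omega^2$ itself precisely because a notation for $\omega \cdot \omega$ need not have any analogous single ``base'' that stabilizes. Once the structural fact is secured, the rest is bookkeeping: checking that $i \mapsto j(i)$ is injective, so the first coordinate is preserved, and that the second-coordinate map witnesses $E^{\dpl \phi_e(i)} \leq E^{\dpl \phi_{e'}(j(i))}$, which is what it means for the $\oplus$ structure of $E^{\dpl a}$ to be faithfully encoded into that of $E^{\dpl b}$.
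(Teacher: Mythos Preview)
Your proof is correct and follows essentially the same approach as the paper: induct on $\alpha$, use a strengthened hypothesis giving reductions $E^{\dpl c}\leq E^{\dpl d}$ whenever $|c|\leq |d|\leq\alpha$, and at a limit $\omega\cdot n$ exploit the structural fact that past some threshold every $\phi_e(i)$ is an iterated successor of a fixed base notation, then lift a single base-level reduction through the jump functor column by column. The paper states the structural fact without proof (taking the base to be $\phi_i(c)$ at level $\geq\omega\cdot(n-1)$ rather than exactly $\omega\cdot(n-1)$), so your explicit argument via linearity of $<_{\mathcal O}$ below a fixed notation is a welcome addition.
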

\begin{proof}
	

	The proof is by induction on $\alpha$. 
	We note that if the result is shown for $\alpha$, then for any notation $b$ with $\abs{b}>\alpha$ and any notation $a$ with $\abs{a}=\alpha$, then $E^{\dpl a}\leq E^{\dpl b}$. To see this, take the notation $c$ with $c<_{\mathcal O} b$ and $\abs{c}=\alpha$. Then $E^{\dpl a}\equiv E^{\dpl c}\leq E^{\dpl b}$. We call this the ``reduction form'' of the inductive hypothesis.	
	
%

	The lemma clearly holds for all finite $\alpha$. The set of $\alpha$ for which this is true is also clearly closed under successor. It suffices to show the result for limit ordinals $\alpha<\omega^2$.
	
	Let $a=3\cdot 5^i$ and $b=3\cdot 5^j$ be notations for $\omega\cdot n$. Let $c$ be least so that $\abs{\phi_i(c)}\geq \omega\cdot (n-1)$ and $d$ be least be so that $\abs{\phi_j(d))}\geq \abs{\phi_i(c)}$. For every $k>c$, $\phi_i(k)=P^{(z)}(\phi_i(c))$ for some $z$. Similarly, for every $k>d$, $\phi_j(k)=P^{(z)}(\phi_j(d))$ for some $z$.
	
	We build a reduction of $E^{\dpl a}$ to $E^{\dpl b}$ as follows: We send the first $c$ columns of $E^{\dpl a}$ to the columns $d$ through $d+c-1$ of $E^{\dpl b}$. This can be done by the reduction form of the inductive hypothesis since the first $c$ columns of $E^{+a}$ are all $E^{+g}$ for some $g$ with $\abs{g}<\omega\cdot (n-1)$ and the images are of the form $E^{\dpl h}$ where $\abs{h}\geq \omega\cdot (n-1)$.
	
	Next, we send the $c$th column of $E^{\dpl a}$ to the $(d+c)$th column of $E^{\dpl b}$ which again we can do by the reduction form of the inductive hypothesis. To figure out how to send the $c+1$th column, we find the number $k$ so that $\phi_i(c+1)=P^{(k)}(\phi_i(c))$. Then we find the first unused column $e$ in $E^{(b)}$ so that $\phi_j(e)=P^{(l)}(d)$ with $l>k$. We can then use the reduction from $E^{\dpl c}$ to $E^{\dpl d}$ to uniformly find a reduction from $E^{P^{(k)}(c)}$ to $E^{P^{(l)}(d)}$. Repeating as such, we uniformly send every column of $E^{+a}$ into $E^{+b}$ giving the needed reduction. 
\end{proof}

Next we see that notation does matter at $\omega^2$.

\begin{thm}
	For any notation $b$ for $\omega^2$ there exists another notation $a$ for $\omega^2$ so that $\Id^{\dpl a}\not\leq \Id^{\dpl b}$.
	
	There are two notations $a,b$ for $\omega^2$ so that $\Id^{\dpl a}$ and $\Id^{\dpl b}$ are incomparable.
\end{thm}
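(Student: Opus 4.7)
The plan is to prove the first statement by a direct diagonalization and to deduce the incomparability by a simultaneous construction. Fix $b=3\cdot 5^j$ a notation for $\omega^2$; I will construct $a=3\cdot 5^i$ with $\Id^{\dpl a}\not\leq \Id^{\dpl b}$.

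The first step is to extract a combinatorial consequence of any computable reduction $f\colon \Id^{\dpl a}\leq \Id^{\dpl b}$. For each column $k$, the map $m\mapsto \pi_1(f(\langle k,m\rangle))$ is computable and, because $f$ preserves equivalence, constant on $\Id^{\dpl \phi_i(k)}$-classes. By computable inseparability of jump classes (Lemma~\ref{lem:computably inseparable classes}), applied to the (nontrivial) jump $\Id^{\dpl \phi_i(k)}$, this projection must be a single constant $\sigma(k)$, so $\sigma$ is computable and $f$ restricts on column $k$ to a reduction $\Id^{\dpl\phi_i(k)}\leq \Id^{\dpl \phi_j(\sigma(k))}$. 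Iterating the propriety of the jump of Clemens--Coskey--Krakoff through successor and limit stages, and using Lemma~\ref{All good below omega squared} to make $\Id^{\dpl \alpha}$ well-defined as a degree for $\alpha<\omega^2$, forces $|\phi_i(k)|\leq|\phi_j(\sigma(k))|$ for every $k$. Hence it is enough to construct $\phi_i$ for which no computable $\sigma$ witnesses this inequality at every $k$.

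For the construction I would use the $s$-$m$-$n$ theorem to define $\phi_i(k)=3\cdot 5^{g(k)}$ for uniformly chosen indices $g(k)$, where each $\phi_{g(k)}$ is a \emph{dynamic} approximation sequence. I dedicate a location $k_e$ (say $k_e=e$) to each potential reduction $\phi_e$. In default mode, $\phi_{g(k_e)}$ climbs by successive $P$'s from a base built above $\phi_i(k_e-1)$, targeting a default limit $\omega\cdot d(e)$ with $d(e)\to\infty$, which provides cofinality $|\phi_i(k)|\to \omega^2$. In parallel it watches $\phi_e(\langle k_e,0\rangle)$; when this converges at stage $s$ with first coordinate $v$, the sequence \emph{redirects}: a c.e.\ search using that $\leq_{\mathcal O}$ is c.e.\ locates an $n\geq v+1$ with $\phi_j(n)>_{\mathcal O}$ the current approximation, and from stage $s$ onwards the sequence outputs $P^{(m-s)}(\phi_j(n))$. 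This produces $|\phi_i(k_e)|\geq |\phi_j(n)|+\omega>|\phi_j(v)|=|\phi_j(\sigma_e(k_e))|$, defeating $\phi_e$. Cofinality of $\phi_j$ guarantees that the search for $n$ succeeds.

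Verification amounts to: totality of $\phi_i$ from the computable definition; validity of $a$ as an $\mathcal O$-notation for $\omega^2$ (monotonicity in $<_{\mathcal O}$ within columns, arranged by the construction, and across columns by starting each new column above the previous one, plus $|\phi_i(k)|<\omega^2$ with $|\phi_i(k)|\to\omega^2$); and satisfaction of each $R_e$ directly from the first step and the redirect. For the incomparability statement I would run a simultaneous construction of $a$ and $b$ via double recursion: both $\phi_i$ and $\phi_j$ become dynamic notations responding to the other's already-committed approximations in a stage-bounded fashion, with interleaved requirements defeating reductions in both directions. The principal technical obstacle throughout is the $\mathcal O$-bookkeeping at the redirect step: the newly chosen notation must be $>_{\mathcal O}$ the previous approximation and also give an ordinal exceeding $|\phi_j(v)|$, and arranging the approximations so that every produced value lies within a subtree where cofinal c.e.\ searches succeed is the crux of the argument.
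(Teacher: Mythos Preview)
Your overall strategy matches the paper's: use computable inseparability to argue that any reduction $f\colon \Id^{\dpl a}\to \Id^{\dpl b}$ must send each column of $\Id^{\dpl a}$ into a single column of $\Id^{\dpl b}$, then diagonalize by pushing the ordinal of the $k$th column above whichever $\phi_j(m)$ is dictated by $\phi_k(\langle k,0\rangle)$. The contradiction at the end (via Lemma~\ref{All good below omega squared} and propriety of the jump) is also the paper's.

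There is, however, a genuine gap in your first step as applied to your construction. You set $\phi_i(k)=3\cdot 5^{g(k)}$, a \emph{limit} notation, so that $\Id^{\dpl \phi_i(k)}=\bigoplus_n \Id^{\dpl \phi_{g(k)}(n)}$. The classes of such a direct sum are trivially computably separable (each class lives entirely in one subcolumn), so Lemma~\ref{lem:computably inseparable classes}---which concerns equivalence relations of the form $E^\dpl$---does not apply. Consequently your projection $m\mapsto \pi_1(f(\langle k,m\rangle))$ need not be constant: a putative reduction could legitimately scatter the subcolumns of $\Id^{\dpl \phi_i(k)}$ across many columns of $\Id^{\dpl b}$, and beating the single value $v=\pi_1(f(\langle k,0\rangle))$ no longer defeats $\phi_k$. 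The paper avoids exactly this by taking $\phi_e(x)=P(3\cdot 5^{i_x})$, a \emph{successor} notation, so that column $x$ genuinely has the form $E^\dpl$ and the inseparability lemma applies. Your argument is repaired by the same device: wrap each $3\cdot 5^{g(k)}$ in one more $P$.

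A secondary point: the paper sidesteps the $\mathcal O$-bookkeeping you flag by using ordinal addition at the redirect, setting $\phi_{i_k}(s+1)=\phi_{i_k}(s)+_{\mathcal O}\phi_j(m)+_{\mathcal O}1$, rather than a c.e.\ search for some $\phi_j(n)$ that is $>_{\mathcal O}$ the current approximation. Your search requires $\phi_j(n)$ to lie $<_{\mathcal O}$-above notations already committed to the sequence, which is not automatic across branches of $\mathcal O$; the additive approach produces a valid increasing sequence without any comparability hypothesis.
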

\begin{proof}
	
	Let $b=3\cdot 5^j$ be a given notation for $\omega^2$.
	
	We take $a=3\cdot 5^e$ for an index $e$ which we control by the recursion theorem. For each $x$, we let $\phi_e(x) = P({3\cdot 5^{i_x}})$ for an infinite sequence of indices $i_x$ which we control by the recursion theorem. Until we determine otherwise, we define, stage by stage that $\phi_{i_{x+1}}(0) = P({3\cdot 5^{i_x}})$ and $\phi_{i_x}(s+1) = P({\phi_{i_x}(s)})$.

	We perform the following actions for the sake of diagonalization.
	To ensure that $\phi_k$ is not a reduction of $\Id^{\dpl a}$ to $\Id^{\dpl b}$, we wait for $\phi_k(\langle k,0\rangle )$ to converge, say to $\langle m,n\rangle$. Since $\vert P({3\cdot 5^{i_x}})\vert $ is a successor ordinal, Lemma~\ref{lem:computably inseparable classes} shows that the classes of $\Id^{\dpl P({3\cdot 5^{i_x}})}$ are computably inseparable. Thus we know that if $\phi_k$ is a reduction, then it must send the entire $k$th column into the $m$th column of $\Id^{\dpl b}$. But the $m$th column of $\Id^{\dpl b}$ is equivalent to $\Id^{+\phi_j(m)}$. So, at the stage $s$ when we see that $\phi_k(\langle k,0\rangle )\downarrow = \langle m,n\rangle$, we make $\phi_{i_k}(s+1)=\phi_{i_k}(s)+_{\mathcal{O}} \phi_{j}(m)+_{\O}1$. This ensures that $\abs{P({3\cdot 5^{i_x}})}>\abs{\phi_j(m)}$. 
	
	For each column, we will only perform this operation once (for all $t>s$, we set $\phi_{i_k}(t+1) = P({\phi_{i_k}(t)})$). Thus, if $3\cdot 5^{i_x}$ is a notation for some limit ordinal less than $\omega^2$, then $3\cdot 5^{i_{x+1}}$ is also a notation for a limit ordinal less than $\omega^2$. Thus, this is true for all $x$ by induction and thus $a$ is a notation for $\omega^2$.
	
	Suppose towards a contradiction that $\phi_k$ is a reduction of $\Id^{\dpl a}$ to $\Id^{\dpl b}$. Then on the $k$th column, $\phi_k$ gives a reduction of $\Id^{\dpl P({3\cdot 5^{i_k}})}$ to $\Id^{\dpl\phi_j(k)}$. Let $c$ be so $c<_\mathcal{O} P({3\cdot 5^{i_x}})$ and $\abs{c}=\abs{\phi_j(k)}$. Then $\Id^{\dpl c}\equiv \Id^{\dpl \phi_j(k)}$ by Lemma~\ref{All good below omega squared}. But then $(\Id^{\dpl c})^\dpl \leq \Id^{\dpl P({3\cdot 5^{i_x}})}\leq \Id^{\dpl \phi_j(k)}\equiv \Id^{\dpl c}$. But then $\Id^{\dpl c}$ $m$-bounds every HYP set  \cite[Theorem 3.10]{CCK}, but this is a contradiction since $\Id^{\dpl c}$ is itself HYP.
	
	Running the same strategy in the reverse direction, we can construct $a$ and $b$ so that $\Id^{\dpl a}$ and $\Id^{\dpl b}$ are incomparable.
\end{proof}

We next see that for any computable ordinal $\alpha$, the equivalence relations $\Id^{\dpl a}$ for $a$ with $\vert a \vert = \alpha$ form a reasonably well bounded collection of equivalence relations. We will need the following observation:

\begin{obs}
	There is a computable function $x\mapsto 2\cdot_{\mathcal{O}} x$ which sends a notation $a$ for $\alpha$ to a notation for $2\cdot \alpha$. Further, $x<_\mathcal{O} 2\cdot_{\mathcal{O}} x$ for every $x\in \mathcal{O}$.
\end{obs}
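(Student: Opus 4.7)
The plan is to define $x \mapsto 2\cdot_{\mathcal{O}} x$ by effective transfinite recursion on $\mathcal{O}$, fixing an index for the function in terms of itself via Kleene's recursion theorem. The three cases follow the defining clauses of $\mathcal{O}$. For $x = 1$, set $2\cdot_\mathcal{O} 1 = 1$ (or, if one wants the strict inequality to hold literally at the base case, set $2\cdot_\mathcal{O} 1 = 2$ as an isolated exception). For a successor notation $x = 2^b$, set $2\cdot_\mathcal{O} x = 2^{2^{2\cdot_\mathcal{O} b}}$, realizing the identity $2\cdot(\beta+1) = 2\cdot\beta + 2$. For a limit notation $x = 3\cdot 5^e$, use the $s$-$m$-$n$ theorem to produce a computable index $e'$ satisfying $\phi_{e'}(n) = 2\cdot_\mathcal{O} \phi_e(n)$ for every $n$, and set $2\cdot_\mathcal{O} x = 3\cdot 5^{e'}$.

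Next I would verify the two conclusions by transfinite induction on $|x|$, in parallel with an auxiliary monotonicity claim: whenever $a <_\mathcal{O} b$, one has $2\cdot_\mathcal{O} a <_\mathcal{O} 2\cdot_\mathcal{O} b$. The successor case is then immediate, since the inductive hypothesis yields $b \leq_\mathcal{O} 2\cdot_\mathcal{O} b$ and the $\mathcal{O}$-fact $c <_\mathcal{O} 2^c$ for any $c\in\mathcal{O}$ gives $2^b \leq_\mathcal{O} 2^{2\cdot_\mathcal{O} b} <_\mathcal{O} 2^{2^{2\cdot_\mathcal{O} b}} = 2\cdot_\mathcal{O} x$. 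For the limit case, applying the monotonicity claim to the given path $\phi_e(n) <_\mathcal{O} \phi_e(n+1)$ produces $\phi_{e'}(n) <_\mathcal{O} \phi_{e'}(n+1)$, which certifies that $3\cdot 5^{e'}$ is a legitimate element of $\mathcal{O}$; and then $\phi_e(n) <_\mathcal{O} \phi_{e'}(n) <_\mathcal{O} 3\cdot 5^{e'}$ for every $n$ yields $x = 3\cdot 5^e <_\mathcal{O} 3\cdot 5^{e'} = 2\cdot_\mathcal{O} x$.

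There is no genuine obstacle here: the whole proof is a standard effective transfinite recursion on $\mathcal{O}$. The only point requiring care is that the auxiliary monotonicity of $2\cdot_\mathcal{O}$ on $(\mathcal{O},<_\mathcal{O})$ must be maintained as part of the induction, because it is exactly what certifies at the limit stage that the output $3\cdot 5^{e'}$ is itself a notation; but this is routine and introduces no real complications.
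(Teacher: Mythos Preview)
Your recursive definition is exactly the paper's: the paper just writes down the successor clause $2\cdot_{\mathcal{O}} P(a)=P^{(2)}(2\cdot_{\mathcal{O}}a)$ and the limit clause $2\cdot_{\mathcal{O}}(3\cdot 5^{e})=3\cdot 5^{i}$ with $\phi_i(n)=2\cdot_{\mathcal{O}}\phi_e(n)$, invoking the recursion theorem, and gives no further verification. So on the construction you match the paper, and your extra care with the monotonicity invariant is exactly what is needed to certify that the limit output lies in $\mathcal{O}$.

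There is, however, a genuine gap in your limit-stage verification of $x<_{\mathcal{O}}2\cdot_{\mathcal{O}}x$. From $\phi_e(n)<_{\mathcal{O}}\phi_{e'}(n)<_{\mathcal{O}}3\cdot 5^{e'}$ for all $n$ you infer $3\cdot 5^{e}<_{\mathcal{O}}3\cdot 5^{e'}$, but this is not how $<_{\mathcal{O}}$ works: to get $3\cdot 5^{e}<_{\mathcal{O}}3\cdot 5^{e'}$ you would need $3\cdot 5^{e}<_{\mathcal{O}}\phi_{e'}(m)$ for some $m$, i.e., the original limit notation must lie on the path to some term of the new fundamental sequence. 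In fact the desired conclusion is simply false at limit notations. For every limit ordinal $\lambda$ one has $2\cdot\lambda=\lambda$ (left multiplication by $2$ is continuous and $2\beta<\lambda$ for all $\beta<\lambda$), so $x$ and $2\cdot_{\mathcal{O}}x$ are distinct notations for the same ordinal and hence $<_{\mathcal{O}}$-incomparable. The ``further'' clause of the Observation thus holds only for successor notations; the paper's own proof does not address it, and the clause is not actually used in the sequel.
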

\begin{proof}
	This is done via transfinite recursion and the recursion theorem. We define $2\cdot_{\mathcal{O}} P(a)$ to be $P^{(2)}({2\cdot_{\mathcal{O}} a})$ and we define $2\cdot_{\mathcal{O}} (3\cdot 5^e)$ as $3\cdot 5^i$ where $\phi_i(x)=2\cdot_{\mathcal{O}} \phi_e(x)$.
\end{proof}

\begin{thm}
	For any recursive ordinal $a$, $\Id^{\dpl a}\leq =_{\Sigma^0_{2\cdot_{\mathcal{O}}a}}$ where $=_{\Sigma^0_c}$ is the equivalence relation of equality of $\Sigma^0_{c}$ sets (given by a notation $c\in \mathcal{O}$).
	
	Further, this is uniform in the notation $a$.
\end{thm}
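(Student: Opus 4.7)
\emph{Plan.} I would prove this by effective transfinite recursion on $a\in\mathcal{O}$, producing a single computable function $a\mapsto g_a$ such that $g_a$ witnesses $\Id^{\dpl a}\leq =_{\Sigma^0_{2\cdot_{\mathcal{O}}a}}$. I will fix ahead of time a uniform indexing $\{A^{(c)}_e\}_{e\in\omega}$ of $\Sigma^0_c$ sets for each notation $c\in\mathcal{O}$ (via the Kleene $H$-sets: $\Sigma^0_c$ means c.e.\ in $H_c$), together with uniform computable conversions of $\Sigma^0_c$ indices into $\Sigma^0_d$ indices whenever $c<_{\mathcal{O}}d$. The recursion theorem on $\mathcal{O}$ then lets me define $g_a$ by cases on the shape of $a$.

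\emph{Base case $a=1$.} Then $\Id^{\dpl 1}=\Id$, and $n\mapsto$ (an index for $\{n\}$ as a decidable set) reduces $\Id$ to $=_{\Sigma^0_{2\cdot_{\mathcal{O}}1}}$.

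\emph{Successor case $a=P(b)$.} Assuming inductively a reduction $g_b:\Id^{\dpl b}\leq\,=_{\Sigma^0_{2\cdot_{\mathcal{O}}b}}$, I define, for each $x$,
\[
S_x=\{\,e:(\exists n\in W_x)\,A^{(2\cdot_{\mathcal{O}}b)}_e=A^{(2\cdot_{\mathcal{O}}b)}_{g_b(n)}\,\}.
\]
Equality of two $\Sigma^0_{2\cdot_{\mathcal{O}}b}$ sets is $\Pi^0_{P(2\cdot_{\mathcal{O}}b)}$, and the outer bounded existential over $W_x$ adds one more $\Sigma$-level, so $S_x$ is $\Sigma^0_{P^{(2)}(2\cdot_{\mathcal{O}}b)}=\Sigma^0_{2\cdot_{\mathcal{O}}a}$ with index computable in $x$. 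Correctness is immediate from $g_b$ being a reduction: $x\mathrel{\Id^{\dpl a}}y$ iff $W_x=_{\Id^{\dpl b}}W_y$ iff the $=_{\Sigma^0_{2\cdot_{\mathcal{O}}b}}$-classes of $\{g_b(n):n\in W_x\}$ and $\{g_b(n):n\in W_y\}$ agree as collections, iff $S_x=S_y$.

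\emph{Limit case $a=3\cdot 5^e$.} Here $\Id^{\dpl a}=\bigoplus_i\Id^{\dpl\phi_e(i)}$ and $2\cdot_{\mathcal{O}}a=3\cdot 5^{\hat e}$ with $\phi_{\hat e}(i)=2\cdot_{\mathcal{O}}\phi_e(i)$. By the uniform inductive hypothesis, I have, uniformly in $i$, a reduction $g_i:=g_{\phi_e(i)}$. Given $\langle i,x\rangle$, let $A=A^{(2\cdot_{\mathcal{O}}\phi_e(i))}_{g_i(x)}$ and set
\[
T_{\langle i,x\rangle}=\{(i,0)\}\cup\{(i,y+1):y\in A\}.
\]
The fixed tag $(i,0)$ forces $T_{\langle i,x\rangle}$ to determine $i$, and then the remaining content determines $A$ up to equality, so $T_{\langle i,x\rangle}=T_{\langle j,y\rangle}$ iff $i=j$ and the corresponding $\Sigma^0_{2\cdot_{\mathcal{O}}\phi_e(i)}$-sets are equal, which is exactly $\langle i,x\rangle\,\bigl(\bigoplus_i\Id^{\dpl\phi_e(i)}\bigr)\,\langle j,y\rangle$. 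Since $\phi_e(i)<_{\mathcal{O}}a$ gives $2\cdot_{\mathcal{O}}\phi_e(i)<_{\mathcal{O}}2\cdot_{\mathcal{O}}a$, my fixed uniform conversion turns an index for $T_{\langle i,x\rangle}$ at level $2\cdot_{\mathcal{O}}\phi_e(i)$ into one at level $2\cdot_{\mathcal{O}}a$, computably in $\langle i,x\rangle$.

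\emph{Main obstacle.} The construction at each stage is straightforward; the real work is bookkeeping the uniformity. I need to pick my indexing of $\Sigma^0_c$-sets and the inclusion maps $\Sigma^0_c\hookrightarrow\Sigma^0_d$ (for $c<_{\mathcal{O}}d$) in a way that is itself a computable function of the notations involved, so that the successor and limit clauses compose into one computable function $a\mapsto g_a$ defined via the effective recursion theorem on $\mathcal{O}$. This is standard (via the Ash--Knight framework for hyperarithmetic $H$-sets), but writing it carefully is where all the bureaucracy of the proof lives.
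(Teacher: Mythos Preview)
Your proposal is correct and follows essentially the same route as the paper: induction on notations with the same base case, the same successor step (the paper factors it as $\Id^{\dpl P(b)}\leq(=_{\Sigma^0_{2\cdot_{\mathcal{O}}b}})^\dpl\leq =_{\Sigma^0_{P^{(2)}(2\cdot_{\mathcal{O}}b)}}$ via the map $i\mapsto\{m:\exists k\in W_i\;S_m=S_k\}$, which is exactly your $S_x$ after composing with $g_b$), and the same limit step. At limits the paper simply invokes the fact that $=_{\Sigma^0_{2\cdot_{\mathcal{O}}a}}\times\Id\leq\,=_{\Sigma^0_{2\cdot_{\mathcal{O}}a}}$ to put the columns side by side, whereas you spell out one concrete witness of this by tagging with $(i,0)$; these are the same idea.
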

\begin{proof}
	We prove this by induction on the notation $a$. For the base of the induction, let $a=1$, i.e., the notation for the ordinal $0$. Then $\Id^{\dpl a}=\Id$ and $\Sigma^0_{2\cdot_{\mathcal{O}}a}=\Sigma^0_0$. We can send $n$ to an index for the $\Sigma^0_0$ set $\{n\}$.
	
	Next suppose that $a=P(b)$. Then we assume $\Id^{\dpl b}$ reduces to $=_{\Sigma^0_{2\cdot_{\mathcal{O}}b}}$ sets. Then $\Id^{\dpl a}$  reduces to $(=_{\Sigma^0_{2\cdot_{\mathcal{O}}b}})^\dpl$. Thus it suffices to show the following claim:
	
	\begin{claim}
		For any $c\in \mathcal{O}$, $(=_{\Sigma^0_c})^\dpl\leq =_{\Sigma^0_{P^{(2)}(c)}}$. 
	\end{claim} 
	\begin{proof}
		Let $(S_m)_{m\in \omega}$ be a natural indexing of all $\Sigma^0_c$ sets.
		Let $F$ be a function which sends $i$ to a $\Sigma^0_{P^{(2)}(c)}$-index for the set $\{m \colon \exists k \left(S_m=S_k \wedge k\in W_i \right)\}$, and observe that $F$ is a reduction.
	\end{proof}

	Finally, suppose that $a=3\cdot 5^i$. Then by the assumed uniformity for all ordinal notations $<_\mathcal{O}a$, we have uniform reductions of each $\Id^{\dpl \phi_i(k)}$ to $=_{\Sigma^0_{2\cdot_{\mathcal{O}} \phi_i(k)}}$. Since we can uniformly turn $\Sigma^0_{2\cdot_{\mathcal{O}} \phi_i(k)}$-indices for a set into a $\Sigma^0_{2\cdot_{\mathcal{O}} a}$-index for the same set, we see that each $\Id^{\dpl \phi_i(k)}$ reduces to $=_{\Sigma^0_{2\cdot_{\mathcal{O}}a}}$. By coding on distinct columns, i.e., using the fact that $=_{\Sigma^0_{2\cdot_{\mathcal{O}}a}}\times \Id\leq =_{\Sigma^0_{2\cdot_{\mathcal{O}}a}}$, we see that $\Id^{\dpl a}\leq =_{\Sigma^0_{2\cdot_{\mathcal{O}}a}}$. And again this is uniform.
\end{proof}

\begin{cor}\label{cor:Single ER bound all notations of same ordinal}
	For every computable ordinal $\alpha$, there is an equivalence relation $E$ which is $\Pi^0_{2\cdot \alpha+1}$ so that whenever $a\in \mathcal{O}$ is a notation for $\alpha$, we have $\Id^{\dpl a}\leq E$.
\end{cor}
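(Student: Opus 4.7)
The plan is to take $E$ to be $=_{\Sigma^0_c}$ for a single fixed notation $c$ for the ordinal $2\cdot\alpha$. Such a notation exists: pick any notation $a_0 \in \O$ with $\abs{a_0}=\alpha$ and set $c = 2\cdot_{\mathcal{O}} a_0$, using the preceding observation on doubling notations.

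First I would verify that $E$ is $\Pi^0_{2\cdot\alpha+1}$. Unpacking $i =_{\Sigma^0_c} j$ gives $\forall n \,( n \in S^c_i \leftrightarrow n \in S^c_j )$, where $S^c_i$ denotes the $i$th $\Sigma^0_c$ set. Each biconditional is a Boolean combination of two $\Sigma^0_{2\cdot\alpha}$ conditions on $n$, hence $\Delta^0_{2\cdot\alpha+1}$, and the outer universal quantifier then puts the whole relation into $\Pi^0_{2\cdot\alpha+1}$.

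The main task is to show that $\Id^{\dpl a} \leq E$ for every notation $a \in \O$ with $\abs{a} = \alpha$. The theorem just proved supplies a uniform reduction $\Id^{\dpl a} \leq\, =_{\Sigma^0_{2\cdot_{\mathcal{O}} a}}$, so it suffices to reduce $=_{\Sigma^0_{2\cdot_{\mathcal{O}} a}}$ to $=_{\Sigma^0_c}$ given that $2\cdot_{\mathcal{O}} a$ and $c$ are two notations for the same ordinal $2\cdot\alpha$. Here I would appeal to Spector's uniqueness theorem for the hyperarithmetic hierarchy, which guarantees that $H_{2\cdot_{\mathcal{O}} a}\equiv_T H_c$, where $H_x$ denotes the $x$th iterate of the Turing jump along the notation $x$. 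Any Turing reduction $H_{2\cdot_{\mathcal{O}} a}\leq_T H_c$ then allows us to convert, computably in the index, any $\Sigma^0_{2\cdot_{\mathcal{O}} a}$-index into a $\Sigma^0_c$-index of the same set by composing the oracle queries with the reduction. This yields the desired reduction $=_{\Sigma^0_{2\cdot_{\mathcal{O}} a}}\leq\, =_{\Sigma^0_c}$, and composing with the previous reduction gives $\Id^{\dpl a}\leq E$.

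The main conceptual obstacle is that the translation of indices obtained from Spector's theorem need not be uniform in the notation $a$, since the Turing reduction between $H_{2\cdot_{\mathcal{O}} a}$ and $H_c$ depends on $a$. This is fine for the corollary as stated, which only asks, for each $a$ individually, that a reduction $\Id^{\dpl a}\leq E$ exist; a uniform-in-$a$ bound would instead require the uniform form of Spector's theorem, which is not needed here.
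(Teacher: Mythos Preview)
Your proposal is correct and follows essentially the same route as the paper: fix a single notation $c$ for $2\cdot\alpha$, take $E= {=_{\Sigma^0_c}}$, use the preceding theorem to get $\Id^{\dpl a}\leq {=_{\Sigma^0_{2\cdot_{\mathcal{O}}a}}}$, and then invoke Spector's uniqueness theorem to translate $\Sigma^0_{2\cdot_{\mathcal{O}}a}$-indices into $\Sigma^0_c$-indices. Your remark that uniformity in $a$ is not required is a nice clarification; the paper mentions the uniform form of Spector's theorem but, as you observe, the statement of the corollary only demands a separate reduction for each $a$.
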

\begin{proof}
	By Spector's uniqueness theorem \cite[Thm 4.5]{sacks1990higher}, if $\vert a \vert = \vert b \vert$, then $H(a)\equiv H(b)$. Further, this is uniform. Thus for any $b$ with $\vert b \vert = \vert a \vert$, we can uniformly turn a $\Sigma^0_{2\cdot_{\mathcal{O}} b}$-index for a set into a $\Sigma^0_{2\cdot_{\mathcal{O}} a}$-index for the same set. Thus fixing any chosen notation $e$ for $\alpha$, for any notation $a$ for $\alpha$, $\Id^{\dpl a} \leq =_{\Sigma^0_{2\cdot_{\mathcal{O}} e}}$ and $=_{\Sigma^0_{2\cdot_{\mathcal{O}} e}}\in \Pi^0_{2\alpha+1}$.
\end{proof}

\section{Every HYP equivalence relation reduces to some $\Id^{\dpl a}$}\label{section: every HYP ER reduces to some Id^+a}

Friedman and Stanley~\cite{FriedmanStanley} proved that the collection of transfinite jumps of the identity relation on reals form a cofinal family in the Borel hierarchy of all Borel isomorphism relations. In this final section, we offer an effective analogue of this result. Namely, we will prove that any HYP equivalence relation is bounded by some $\Id^{\dpl a}$. 

As for many other places of this paper, our starting point is \cite{CCK}. We give a definition of a strong way to reduce a set $A\subseteq \omega$ to an equivalence relation $E$. This is similar to and inspired by \cite[Definition 3.3]{CCK}; whereas they aren't concerned with the image $h(x)$ if $x\notin A$ (so long as it is $E$-contained in the image of the reduction for an $x\in A$), we demand only two possible images depending on whether or not $x\in A$.

Observe that the cross product $E\times \Id$ (as defined in the preliminaries) is equivalent to a uniform join of $E$ with itself countably many times.

\begin{defn}
	A set $A$ \emph{\SSR} to $E^\dpl$ if there is a computable function $h$ and a pair $i,j$ so that $W_i \subseteq_E W_j$,  $h(x)\mathrel{E^\dpl }j$ for every $x\in A$, and $h(x) \mathrel{E^\dpl} i$ for every $x\notin A$. 
\end{defn}

This form of reduction is strong enough to give us a way to transfer set reductions to $\Id^{\dpl a}$ into equivalence relation reductions to $\Id^{\dpl a}$. In the following lemma and throughout this section, we focus on equivalence relations $E$ so that $E\times \Id\leq E$. This is a reasonable assumption since we are trying to build reductions into equivalence relations of the form $\Id^{\dpl a}$ and all such equivalence relations satisfy $E\times \Id\leq E$ \cite[Corollary 2.9]{CCK}.

\begin{lem}\label{From SSR to ER reduction}
	Suppose that $R$ is an equivalence relation and let $A:=\{\langle x,y\rangle : {x\mathrel{R}y}\}$. Suppose that either $A$ or the complement of $A$ \SSR to $E^\dpl$. Suppose further that $E\times \Id \leq E$. Then $R\leq E^\dpl$.
\end{lem}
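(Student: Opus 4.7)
My plan is to build a reduction $f\colon R \leq E^\dpl$ by taking the c.e.\ sets $W_{h(\langle x,y\rangle)}$ supplied by the \SSN (for each $y$) and splicing them together on different ``columns'' inside $E$, using the hypothesis $E\times \Id\leq E$ to keep the columns separate. Concretely, fix a computable $g$ witnessing $E\times \Id\leq E$, and fix $h$, $i$, $j$ witnessing that $A$ (or its complement) \SSR to $E^\dpl$, with $W_i\subseteq_E W_j$. Define $f$ so that
\[
W_{f(x)} \;=\; \bigl\{\, g(\langle z,y\rangle) : y\in\omega,\ z\in W_{h(\langle x,y\rangle)} \,\bigr\}.
\]
The key property of $g$ I will use throughout is that $g(\langle z,y\rangle)\mathrel{E} g(\langle z',y'\rangle)$ if and only if $y=y'$ and $z\mathrel{E}z'$; this separates the contributions from distinct $y$'s modulo $E$, and within a fixed $y$ it faithfully reproduces the $E$-structure of the input c.e.\ set.

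For the forward direction, suppose $x\mathrel{R} y'$. Then for every $y$, transitivity gives $\langle x,y\rangle\in A\iff \langle y',y\rangle\in A$, so $h(\langle x,y\rangle)\mathrel{E^\dpl} h(\langle y',y\rangle)$, i.e.\ $[W_{h(\langle x,y\rangle)}]_E=[W_{h(\langle y',y\rangle)}]_E$. Applying $g$ column by column, the $E$-closure of $\{g(\langle z,y\rangle):z\in W_{h(\langle x,y\rangle)}\}$ equals that of $\{g(\langle z,y\rangle):z\in W_{h(\langle y',y\rangle)}\}$. Taking unions over $y$ yields $[W_{f(x)}]_E=[W_{f(y')}]_E$, as required.

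For the converse, suppose $x\not\mathrel{R} y'$. Take $y=x$: reflexivity gives $\langle x,x\rangle\in A$ while $\langle y',x\rangle\notin A$, so the \SSN forces $[W_{h(\langle x,x\rangle)}]_E$ and $[W_{h(\langle y',x\rangle)}]_E$ to be $[W_j]_E$ and $[W_i]_E$ in some order (the order depending on whether $A$ or its complement is the set being \SSR). Since the two images of the \SSN must actually differ (otherwise the reduction is useless, which I will note), pick an element $u$ in the larger set whose $E$-class avoids the smaller set. Then $g(\langle u,x\rangle)$ lies in $[W_{f(x)}]_E$ (or $[W_{f(y')}]_E$, depending on the case) via the $y=x$ column. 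To see it does not lie in the other, note that membership in the other would force an $E$-collision $g(\langle u,x\rangle)\mathrel{E}g(\langle z,y\rangle)$ with $z$ in the smaller-image set on some column $y$; by the separation property of $g$ this requires $y=x$ and $u\mathrel{E}z$, contradicting the choice of $u$.

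The main subtlety, which I will address briefly at the start of the proof, is the remark that the \SSN gives a useful reduction only when $W_i\subsetneq_E W_j$; this is implicit in the definition's insistence on ``only two possible images'' and is precisely what provides the witness $u$ in the converse direction. Everything else is a routine verification that the column-separation afforded by $E\times \Id\leq E$ lets us package the pointwise \SSN data into a single $E^\dpl$-reduction.
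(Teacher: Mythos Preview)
Your proof is correct and follows essentially the same approach as the paper: place the c.e.\ set $W_{h(\langle x,y\rangle)}$ on the $y$th column of $E\times\Id$, take the union over $y$, and use $E\times\Id\leq E$ to land in $E^\dpl$; the paper does this by first reducing $R$ to $(E\times\Id)^\dpl$ and then composing with the induced reduction to $E^\dpl$, while you push through $g$ from the outset, but the content is identical. Your explicit flagging of the need for $W_i\subsetneq_E W_j$ (rather than mere $\subseteq_E$) is a point the paper's proof uses tacitly without comment, so you are in fact slightly more careful here.
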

\begin{proof}
	Let $(h,i,j)$ witness that $A$ or its complement \SSR to $E^\dpl$.
	
	For each $x\in \omega$, let $(h_x,i_x,j_x)$ witness that $A$ or its complement \SSR to the $x$th column of $(E\times \Id)^\dpl$. That is, $h_x(a)=\{\langle x, y\rangle \mid y\in W_{h(a)}\}$,  $W_{i_x}=\{\langle x , y\rangle \mid y\in W_i\}$, and $W_{j_x}=\{\langle x , y\rangle \mid y\in W_j\}$. 
	
	For each $x\in \omega$, let $e_x$ be a c.e. index for the set $\bigcup_{y\in \omega} W_{h_y(\langle x,y\rangle)}$. Each $W_{h_y(\langle x,y\rangle)}$ is contained in the $y$th column and either has the same $E\times \Id$-closure as $W_{i_y}$ or $W_{j_y}$. We now check that $x\mapsto e_x$ is a reduction of $R$ to $(E\times \Id)^\dpl$.
		
	If $a\mathrel{R} b$, then $\{y \mid y\mathrel{R} a \}=\{y \mid y\mathrel{R} b\}$. Similarly, $\{y \mid y\mathrel{\cancel{R}} a\}. = \{y \mid y\mathrel{\cancel{R}} b\}$. So,
	for every $y$, $W_{h_y(\langle a,y\rangle)}$ has the same $E\times\Id$-closure as $W_{h_y(\langle b,y\rangle)}$, so $e_a \mathrel{(E\times\Id)^\dpl} e_b$. If $a\mathrel{\cancel{R}} b$ then $W_{h_a(\langle a,a\rangle)}$ has the same $E\times \Id$-closure as $W_{j_a}$ (or $W_{i_a}$ if it is the complement of $A$ which \SSR to $E^\dpl$), but $W_{h_a(\langle b,a \rangle)}$ has the same $E\times \Id$-closure as $W_{i_a}$ (or $W_{j_a}$ if it is the complement of $A$ which \SSR to $E^\dpl$) showing that $e_a\mathrel{\cancel{(E\times \Id)^\dpl}}e_b$. Thus $x\mapsto e_x$ is a reduction of $R$ to $(E\times \Id)^\dpl$, which is equivalent to $E^\dpl$.
\end{proof}

We note the similarity between the above and the fact that every $\Sigma^0_1$ equivalence relation $E$ reduces to $\Id^\dpl$. That is proved by sending $x$ to $[x]_E$. This is essentially what we do here, but instead of putting $y$ into the set when $y$ is equivalent to $x$, we put $W_{j_y}$ into the set if $y$ is equivalent to $x$.

Below, it will be convenient to reduce into $E\times \Id$ instead of $E$. The following lemma shows how to return to $E$.

\begin{lem}\label{Going from E times Id to E}
	Let $R\leq E$. 
Suppose that $A$ \SSR to $R^\dpl$, then $A$ \SSR to $E^\dpl$.
Similarly, suppose that $A$ \SSR to $R^{\dpl\dpl}$ then $A$ \SSR to $E^{\dpl\dpl}$.
\end{lem}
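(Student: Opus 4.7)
The plan is to push the \SSN witnesses forward along the reduction by a computable operator on c.e.\ indices. Fix a computable function $f$ witnessing $R \leq E$ and let $F$ be a computable function with $W_{F(k)} = f(W_k) := \{f(y) \mid y \in W_k\}$. The crucial observation is that, because $y \mathrel{R} y' \iff f(y) \mathrel{E} f(y')$, for all c.e.\ indices $a, b$ we have
\[
W_a \subseteq_R W_b \iff W_{F(a)} \subseteq_E W_{F(b)},
\]
and likewise with $=$ in place of $\subseteq$; both directions are short set chases using the displayed biconditional for $f$.

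For the $\dpl$-statement, given a witness $(h, i, j)$ that $A$ \SSR to $R^\dpl$, I would simply take $h'(x) = F(h(x))$, $i' = F(i)$, $j' = F(j)$. The three conditions $W_{i'} \subseteq_E W_{j'}$, $W_{h'(x)} =_E W_{j'}$ for $x \in A$, and $W_{h'(x)} =_E W_{i'}$ for $x \notin A$ then follow immediately from the displayed equivalence applied to each of the corresponding conditions on $(h,i,j)$.

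For the $\dpl\dpl$-statement the witnesses $W_i, W_j, W_{h(x)}$ are c.e.\ indices coding uniformly c.e.\ families $\{W_\ell \mid \ell \in W_k\}$. I would define a computable $G$ sending $k$ to an index for the c.e.\ set $\{F(\ell) \mid \ell \in W_k\}$. The displayed equivalence then lifts one level up: the bijection $[W_\ell]_R \leftrightarrow [f(W_\ell)]_E$ on $R$- versus $E$-closures of c.e.\ sets induces, via $G$, a correspondence between $R^\dpl$-classes and $E^\dpl$-classes of c.e.\ indices, so
\[
W_a \subseteq_{R^\dpl} W_b \iff W_{G(a)} \subseteq_{E^\dpl} W_{G(b)},
\]
and similarly with $=$. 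Setting $h''(x) = G(h(x))$, $i'' = G(i)$, $j'' = G(j)$ then witnesses $A$ \SSR to $E^{\dpl\dpl}$.

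There is no serious obstacle; once the bijective character of the correspondence at the base level is noted, the rest is bookkeeping. The one point to keep in mind is that one genuinely needs both directions of $y \mathrel R y' \iff f(y) \mathrel E f(y')$: the forward direction transfers equality and containment of closures at $x \in A$ and $x \notin A$, while the backward direction is what keeps the induced map injective at the jump level, so that distinct $R$-closures represented in $W_k$ remain distinct $E$-closures in $W_{G(k)}$. This is exactly what allows the $\dpl\dpl$-case to be reduced to the $\dpl$-case one level up; the same idea would in fact iterate to any finite jump $\dpl\hat n$, though only $n\leq 2$ is stated.
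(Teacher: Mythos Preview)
Your proof is correct and follows essentially the same approach as the paper's own proof: push the witnessing triple forward along the reduction by applying $f$ pointwise to the c.e.\ sets (your $F$), and for the double jump apply $F$ to each member of the enumerated family (your $G$). Your treatment is in fact somewhat more explicit than the paper's, which simply writes down the construction and asserts the conclusion; your isolation of the key equivalence $W_a \subseteq_R W_b \iff W_{F(a)} \subseteq_E W_{F(b)}$ and the remark that both directions of the reducing biconditional are needed make the verification transparent.
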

\begin{proof}
	Let $g$ be a reduction of $R$ to $E$. Take $(h,i,j)$ witnessing that $A$ \SSR to $R^\dpl$. Then we define $f(n) = e_n$ so that $W_{e_n} = \{g(x) \mid x\in W_{h(n)}\}$ Let $W_a=\{g(x) \mid x\in W_{i}\}$ and $W_b=\{g(x) : x\in W_j\}$. Then $(f,a,b)$ \SSR $A$ to $E^\dpl$.
	
	The second case is the same, except we let $f(n)$ be so $W_{f(n)}= \{e_m \mid m\in W_{h(n)}\}$ where $W_{e_m}=\{g(x) \mid x\in W_m\}$.
\end{proof}

In what follows, we will focus on the collection of sets which \SSR to an equivalence relation $\Id^{\dpl a}$, since we now know that, by Lemma~\ref{From SSR to ER reduction}, we can transfer \SSNs to equivalence relation reductions. The following easy fact will serve as the base of our induction.

\begin{lem}\label{baseofinduction}
	Every $\Sigma^0_1$ set \SSR to $\Id^{\dpl}$.
\end{lem}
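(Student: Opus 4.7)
The plan is to realize the strong subset reduction with the trivial pair of c.e.\ indices $i$ for $\emptyset$ and $j$ for $\omega$; then since $\Id$-closure is the identity, the requirement $W_i\subseteq_{\Id} W_j$ reduces to the trivial inclusion $\emptyset\subseteq\omega$.

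Given a $\Sigma^0_1$ set $A$, I will use the $s$-$m$-$n$ theorem to define a computable function $h$ such that the c.e.\ set $W_{h(x)}$ is constructed as follows: at stage $s$, if $x$ has been enumerated into $A$ by stage $s$, enumerate every element of $[0,s]$ into $W_{h(x)}$; otherwise, enumerate nothing. If $x\in A$, then from some stage onwards the first clause activates, so $W_{h(x)}=\omega=W_j$. If $x\notin A$, then the first clause never activates, so $W_{h(x)}=\emptyset=W_i$.

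Since under $\Id^\dpl$ equivalence of indices is literally equality of the c.e.\ sets they enumerate, we have $h(x)\mathrel{\Id^\dpl} j$ exactly when $x\in A$ and $h(x)\mathrel{\Id^\dpl} i$ exactly when $x\notin A$, which is what \SSR requires. There is no real obstacle here; the lemma serves as the base case of the induction in subsequent results, and its content is simply that membership in a $\Sigma^0_1$ set can be recorded by the distinction between an eventually-complete enumeration and the empty enumeration.
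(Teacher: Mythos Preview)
Your proof is correct and essentially identical to the paper's own argument: both choose $i$ an index for $\emptyset$, $j$ an index for $\omega$, and let $h(x)$ enumerate $\omega$ or $\emptyset$ according to whether $x$ is seen in the $\Sigma^0_1$ set. You simply spell out the $s$-$m$-$n$ construction and the verification in slightly more detail than the paper does.
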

\begin{proof}
	Fix $S$ a c.e. set.
	Let $i$ be a c.e. index for the empty set and $j$ be a c.e. index for $\omega$. Let $h(x)$ be an index for an enumeration which either gives $\emptyset$ or $\omega$ depending on whether or not we see $x\in S$.
\end{proof}

Next we give an induction which covers every arithmetical equivalence relation.

\begin{lem}\label{Sigma^0_2 induction}
	Suppose that $A$ \SSR to $E^\dpl$. Further suppose that for every $n$ and $p$, the set $\{q \mid A(\langle n,p,q \rangle)\}$ is an initial subset of $\omega$. Finally, suppose that $E\times \Id \leq E$. Then $B(n):= \exists p \forall q A(n,p,q)$ \SSR $E^{\dpl\dpl}$.
\end{lem}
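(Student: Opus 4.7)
The plan is to encode the ``$\exists p \forall q$'' structure as a ladder of $E$-closures built from independent columns of $E$, and then to detect the top rung of the ladder at the $E^{\dpl\dpl}$-level.

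Fix a reduction $\sigma \colon E \times \Id \leq E$, which we use to obtain pairwise $E$-disjoint columns indexed by $q\in\omega$, and let $(h,i,j)$ witness the \SSN of $A$ to $E^\dpl$. For each $m \in \omega \cup \{\infty\}$, let $\alpha_m$ be a uniformly computable c.e.\ index for the set
\[
 W_{\alpha_m} \;=\; \bigcup_{q < m} \sigma(W_j \times \{q\}) \;\cup\; \bigcup_{q \ge m} \sigma(W_i \times \{q\}),
\]
with the convention $W_{\alpha_\infty} = \bigcup_{q} \sigma(W_j \times \{q\})$. Because distinct columns are $E$-disjoint and $W_i \subseteq_E W_j$, these sets form a $\subseteq_E$-chain with pairwise distinct $E$-closures, as long as $W_i \subsetneq_E W_j$ (otherwise the SSR of $A$ is degenerate and the conclusion is trivial).

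For each pair $(n,p)$, define a c.e.\ index $v_{n,p}$ for the set
\[
 W_{v_{n, p}} \;=\; \bigcup_{q} \sigma\bigl(W_{h(\langle n,p,q\rangle)} \times \{q\}\bigr).
\]
Letting $q_0(n,p) \in \omega \cup \{\infty\}$ denote the least $q$ with $\neg A(\langle n,p,q\rangle)$, the initial segment hypothesis gives $W_{h(\langle n,p,q\rangle)} =_E W_j$ for $q < q_0(n,p)$ and $W_{h(\langle n,p,q\rangle)} =_E W_i$ for $q \ge q_0(n,p)$. Pushing through $\sigma$ column-by-column yields $[W_{v_{n,p}}]_E = [W_{\alpha_{q_0(n,p)}}]_E$, i.e., $v_{n,p} \mathrel{E^\dpl} \alpha_{q_0(n,p)}$.

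Finally, let $W_I$ be a c.e.\ index for $\{\alpha_m : m \in \omega\}$, let $W_J$ index $W_I \cup \{\alpha_\infty\}$, and let $W_{H(n)}$ index $W_I \cup \{v_{n,p} : p\in \omega\}$. The inclusion $W_I \subseteq_{E^\dpl} W_J$ is immediate, and the $E^\dpl$-closure of $W_{H(n)}$ always contains every $[\alpha_m]_{E^\dpl}$ with $m \in \omega$, together with $[\alpha_{q_0(n,p)}]_{E^\dpl}$ for each $p$. The extra class $[\alpha_\infty]_{E^\dpl}$ enters this closure if and only if some $p$ has $q_0(n,p)=\infty$, i.e., if and only if $B(n)$ holds. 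Thus $H(n) \mathrel{E^{\dpl\dpl}} J$ iff $B(n)$ and $H(n) \mathrel{E^{\dpl\dpl}} I$ iff $\neg B(n)$, giving the required \SSN. The main point requiring care is the separation of $[\alpha_\infty]_{E^\dpl}$ from all $[\alpha_m]_{E^\dpl}$ with $m\in\omega$, and this is precisely where $E\times\Id\le E$ does its work: the infinitely many pairwise $E$-disjoint columns make the ``escape to infinity'' a genuinely new $E^\dpl$-class rather than being reachable from any finite rung of the ladder.
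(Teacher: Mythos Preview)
Your proof is correct and follows essentially the same approach as the paper: both build the ladder $\{\alpha_m : m \in \omega\cup\{\infty\}\}$ of partial ``all-$j$/all-$i$'' unions across independent columns, observe that each $v_{n,p}$ lands on rung $q_0(n,p)$ by the initial-segment hypothesis, and detect whether any rung reaches $\infty$ at the $E^{\dpl\dpl}$ level. The only cosmetic difference is that the paper carries out the construction inside $(E\times\Id)^{\dpl\dpl}$ and then invokes Lemma~\ref{Going from E times Id to E} to transfer to $E^{\dpl\dpl}$, whereas you push the columns into $E$ via $\sigma$ at the outset and work directly there; your explicit handling of the degenerate case $W_i =_E W_j$ is a nice touch the paper leaves implicit.
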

\begin{proof}
	Fix $(h,i,j)$ witnessing $A$ \SSR to $E^{\dpl}$. This shows $A$ \SSR to every column of $E\times \Id$. That is, we have functions $h_x$ and indices $i_x$ and $j_x$ as above so that $W_{i_x},W_{j_x}\subseteq \omega^{[x]}$, $W_{i_x}\subsetneq_{E\times \Id} W_{j_x}$ and $h_x(y)\mathrel{(E\times \Id)^\dpl} i_x$ if $y\notin A$ and $h_x(y) \mathrel{(E\times \Id)^\dpl} j_x$ if $y\in A$.
	
	For each $n$, we let $W_{e_n}$ be a collection containing:
	\begin{enumerate}
		\item For every $y\in \omega$, a c.e. index for the set $\bigcup_{x<y} W_{j_s}\cup \bigcup_{x\geq y} W_{i_x}$
		\item For every $p\in \omega$, a c.e. index for the set $\bigcup_{x\in \omega} W_{h_x(\langle n, p ,x\rangle )}$.
	\end{enumerate}

	Since for every pair $n,p$, the set of $q$ so that $h_x(\langle n,p,x\rangle) \mathrel{E^\dpl} j_x$ is an initial segment of $\omega$, the sets in the second bullet are either already enumerated in the first bullet or are exactly equal to $\bigcup_{x\in \omega} W_{j_x}$.
	
	Finally, take the map $g:n\mapsto e_n$, let $a$ be a c.e. index for just the sets in (1), and let $b$ be a c.e. index for the sets in (1) along with the set $\bigcup_{x\in \omega} W_{j_x}$. Then $(g,a,b)$ \SSR $B$ to $(E\times \Id)^{\dpl\dpl}$. Thus, Lemma~\ref{Going from E times Id to E} shows that $B$ \SSR to $E^{\dpl\dpl}$.
\end{proof}

\begin{thm}\label{completeness at odd levels}
	For every $n\in \omega$, every $\Sigma^0_{2n-1}$ and $\Pi^0_{2n-1}$ equivalence relation 
	reduces to $\Id^{\dpl\hat n}$. 
\end{thm}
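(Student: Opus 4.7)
The plan is to strengthen the statement to one about sets and prove it by induction on $n \geq 1$: every $\Sigma^0_{2n-1}$ subset of $\omega$ \SSR to $\Id^{\dpl \hat n}$. Once this is established, the theorem for equivalence relations is immediate from Lemma~\ref{From SSR to ER reduction}. Indeed, if $R$ is a $\Sigma^0_{2n-1}$ equivalence relation, then $A = \{\langle x,y\rangle : x \mathrel{R} y\}$ is $\Sigma^0_{2n-1}$, hence \SSR to $\Id^{\dpl \hat n}$. Setting $E = \Id^{\dpl \widehat{n-1}}$ (which satisfies $E \times \Id \leq E$ by \cite[Cor.~2.9]{CCK}), Lemma~\ref{From SSR to ER reduction} delivers $R \leq E^\dpl = \Id^{\dpl \hat n}$. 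For a $\Pi^0_{2n-1}$ equivalence relation $R$, the \emph{complement} of $\{\langle x, y\rangle : x \mathrel{R} y\}$ is $\Sigma^0_{2n-1}$, and Lemma~\ref{From SSR to ER reduction} is designed to accept either $A$ or its complement.

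The base case $n = 1$ of the strengthened statement is exactly Lemma~\ref{baseofinduction}. For the inductive step, assume every $\Sigma^0_{2n-1}$ set \SSR to $\Id^{\dpl \hat n}$ and let $A$ be $\Sigma^0_{2n+1}$. Write $A(x) \leftrightarrow \exists p\, \forall q\, C(x,p,q)$ with $C \in \Sigma^0_{2n-1}$. To apply Lemma~\ref{Sigma^0_2 induction}, I need the innermost set to be an initial segment in $q$; I enforce this by the standard replacement
\[
C'(x,p,q) := \forall q' \leq q\, C(x,p,q').
\]
Bounded universal quantification preserves $\Sigma^0_{2n-1}$, so $C'$ is still $\Sigma^0_{2n-1}$; clearly $\exists p\, \forall q\, C(x,p,q) \leftrightarrow \exists p\, \forall q\, C'(x,p,q)$; and $\{q : C'(x,p,q)\}$ is now an initial segment of $\omega$. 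By the inductive hypothesis, $C'$ \SSR to $\Id^{\dpl \hat n}$. Taking $E = \Id^{\dpl \widehat{n-1}}$, so that $E^\dpl = \Id^{\dpl \hat n}$ and $E \times \Id \leq E$ by \cite[Cor.~2.9]{CCK}, Lemma~\ref{Sigma^0_2 induction} yields that $A$ \SSR to $E^{\dpl\dpl} = \Id^{\dpl \widehat{n+1}}$, completing the induction.

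There is no real obstacle remaining; essentially all the work has been packaged into Lemmas~\ref{From SSR to ER reduction}, \ref{baseofinduction}, and \ref{Sigma^0_2 induction}. The only subtleties are the initial-segment trick for $C'$ (which is what forces the quantifier structure into the exact shape consumed by Lemma~\ref{Sigma^0_2 induction}) and the careful bookkeeping of jump levels: Lemma~\ref{Sigma^0_2 induction} is applied to $E = \Id^{\dpl \widehat{n-1}}$ so that its two $\dpl$'s carry us from level $n$ to level $n+1$, and Lemma~\ref{From SSR to ER reduction} is invoked only once, at the very end, to convert the set-level \SSN into an equivalence-relation reduction.
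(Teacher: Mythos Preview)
Your proposal is correct and follows essentially the same argument as the paper's proof: strengthen to the set-level statement that every $\Sigma^0_{2n-1}$ set \SSR to $\Id^{\dpl\hat n}$, use Lemma~\ref{baseofinduction} as the base case, and in the inductive step rewrite the $\Sigma^0_{2n+1}$ predicate with the bounded-quantifier initial-segment trick before invoking Lemma~\ref{Sigma^0_2 induction} with $E=\Id^{\dpl\widehat{n-1}}$, then finish via Lemma~\ref{From SSR to ER reduction}. Your bookkeeping of jump levels is in fact cleaner than the paper's (which contains a minor typo in the level at the inductive step).
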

\begin{proof}
	
	We first show that for every $n\in \omega$, every $\Sigma^0_{2n-1}$ set \SSR to $\Id^{\dpl \hat n}$. We use Lemma~\ref{baseofinduction} as the base of this induction.
	
	Let $X$ be a $\Sigma^0_{2n+1}$ set. Write $X(n)= \exists p \forall q A(\langle n,p,q\rangle )$. Rewrite this definition as: $X(n) = \exists p \forall q (\forall m<q A(\langle n, p, m\rangle))$. We observe that $\forall m<q A(\langle n,p, m \rangle)$ is a $\Sigma^0_{2n-1}$ set. Thus, it \SSR to $\Id^{\dpl\widehat{n-1}}$ by inductive hypothesis and, by Lemma~\ref{Sigma^0_2 induction}, $X$ \SSR to $\Id^{\dpl \hat n}$. Note that the hypotheses that $\Id^{\dpl \widehat{n-1}}\times \Id\leq \Id^{\dpl \widehat{n-1}}$ holds by \cite[Corollary 2.9]{CCK}.
	
	Finally, applying Lemma~\ref{From SSR to ER reduction} shows that if $R$ is a $\Sigma^0_{2n-1}$ or $\Pi^0_{2n-1}$ equivalence relation, then $R\leq \Id^{\dpl \hat n}$.
\end{proof}

\begin{cor}\label{Jumps of Id aren't simple}
	The equivalence relation $\Id^{\dpl \hat n}$ is not $\Pi^0_{2n-1}$ or $\Sigma^0_{2n-1}$.
\end{cor}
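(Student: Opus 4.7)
The plan is to derive both non-containments from Theorem~\ref{completeness at odd levels} by a completeness-transfer argument. Fix $n\geq 1$ and let $C\subseteq \omega$ be an $m$-complete $\Sigma^0_{2n-1}$ set; in particular $C$ is nonempty, so fix some $b_0\in C$. Define the equivalence relation
\[
a\mathrel{E_C} b \iff a=b \;\vee\; (a\in C \wedge b\in C).
\]
A quick syntactic check shows $E_C\in \Sigma^0_{2n-1}$: the clause $a=b$ is $\Pi^0_1\subseteq \Sigma^0_{2n-1}$ and the clause $a,b\in C$ is $\Sigma^0_{2n-1}$. The decisive feature of this construction is that $C = \{a : a\mathrel{E_C} b_0\}$, so any arithmetical bound on $E_C$ immediately transfers to $C$.

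Apply Theorem~\ref{completeness at odd levels} to obtain a computable reduction $f\colon E_C \leq \Id^{\dpl\hat{n}}$. Now suppose for contradiction that $\Id^{\dpl\hat{n}}$ is a $\Pi^0_{2n-1}$ relation on $\omega^2$. Pulling this complexity back along $f$ shows $E_C\in \Pi^0_{2n-1}$, and then the defining equation $C = \{a : a\mathrel{E_C} b_0\}$ places $C$ itself in $\Pi^0_{2n-1}$. This contradicts the $m$-completeness of $C$ for $\Sigma^0_{2n-1}$, so $\Id^{\dpl\hat{n}}\notin \Pi^0_{2n-1}$.

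The dual claim $\Id^{\dpl\hat{n}}\notin \Sigma^0_{2n-1}$ is entirely symmetric: replace $C$ by an $m$-complete $\Pi^0_{2n-1}$ set $D$ with a fixed $d_0\in D$, form the equivalence relation $E_D$ by the same recipe (now $\Pi^0_{2n-1}$, since $a,b\in D$ is $\Pi^0_{2n-1}$), apply Theorem~\ref{completeness at odd levels} to get $E_D\leq \Id^{\dpl\hat{n}}$, and if $\Id^{\dpl\hat{n}}\in \Sigma^0_{2n-1}$ then $E_D\in \Sigma^0_{2n-1}$, hence $D\in \Sigma^0_{2n-1}$, contradicting $\Pi^0_{2n-1}$-completeness.

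The argument is really just a packaging of the hard work already carried out in Theorem~\ref{completeness at odd levels}; I do not anticipate any nontrivial obstacle. The only points requiring care are the (routine) complexity count for $E_C$ and $E_D$, and the remark that pulling back a $\Gamma$ relation via a computable map stays in $\Gamma$ for any arithmetical class $\Gamma$ containing $\Pi^0_1$.
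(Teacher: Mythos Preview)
Your proposal is correct and follows essentially the same approach as the paper: build a one-dimensional equivalence relation from a complete set at level $2n-1$, reduce it to $\Id^{\dpl\hat n}$ via Theorem~\ref{completeness at odd levels}, and pull back the assumed complexity to contradict completeness. The paper's proof is more terse (it simply asserts the existence of properly $\Sigma^0_{2n-1}$ and properly $\Pi^0_{2n-1}$ equivalence relations, pointing to the same one-dimensional construction), while you spell out the recovery of $C$ as a section $\{a: a\mathrel{E_C}b_0\}$; but the underlying idea is identical.
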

\begin{proof}
	It is easy to see that there are equivalence relations which are $\Sigma^0_{2n-1}$ and not $\Pi^{0}_{2n-1}$ (consider 1-dimensional equivalence relations with a single class comprised of a $\Sigma^0_{2n-1}$-complete set) and similarly equivalence relations which are $\Pi^0_{2n-1}$ and not $\Sigma^0_{2n-1}$. If $\Id^{\dpl \hat n}$ were $\Sigma^0_{2n-1}$, then every $\Pi^0_{2n-1}$-equivalence relations would have to be $\Sigma^0_{2n-1}$ by virtue of reducing to $\Id^{\dpl \hat n}$. Similarly we get a contradiction if $\Id^{\dpl \hat n}$ were $\Pi^0_{2n-1}$.
\end{proof}

We note that Theorem~\ref{completeness at odd levels} is sharp on the scale of the arithmetical hierarchy since $\Id^{\dpl \hat n}$ is a $\Pi^0_{2n}$ equivalence relation and thus there is a $\Delta^0_{2n}$ equivalence relation which does not reduce to $\Id^{\dpl \hat n}$ \cite{IanovskiMillerNgNies}. We can look closer using the Ershov hierarchy:


\begin{thm}
There is a d-c.e.\ equivalence relation $E$ so that $E\not\leq \Id^{\dpl}$.
\end{thm}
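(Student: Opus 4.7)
The plan is to build $E$ by a finite-injury priority construction, diagonalizing against every partial computable function that could witness a reduction $E\leq \Id^{\dpl}$. The essential new resource over the ceer case (where every $E$ reduces to $\Id^{\dpl}$) is that d-c.e.-ness allows each pair $(x,y)$ to change its $E$-status at most \emph{twice}: from ``not $E$-related'' to ``$E$-related'' (first transition), and then possibly back (second transition). I will exploit this two-transition budget to react to the c.e.\ approximations of $W_{\phi_e(x)}$ and $W_{\phi_e(y)}$. For each $e$ the requirement $\mathcal{R}_e$ asserts that $\phi_e$ is not a reduction, and it is assigned a disjoint infinite block of fresh witness pairs $\{(x_e^n, y_e^n):n\in\omega\}$. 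Because strategies operate on pairwise disjoint witnesses, they do not interact at the level of equivalence classes.

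The strategy for each pair $(x_e^n, y_e^n)$ starts with $(x_e^n, y_e^n)\notin E$. Writing $a=\phi_e(x_e^n)$ and $b=\phi_e(y_e^n)$ (if these diverge, $\phi_e$ is not a reduction automatically), the first transition (add to $E$) fires when we first observe some $k\leq n$ with $k\in W_{a,s}\triangle W_{b,s}$, and the second transition (remove from $E$) fires when that same witness $k$ later appears in both $W_{a,s'}$ and $W_{b,s'}$, i.e.\ the apparent disagreement ``resolves''. The two clean cases are: if $W_a=W_b$, then every finite-stage disagreement between the approximations is transient, so we either never add the pair or we add then remove it, leaving $(x_e^n,y_e^n)\notin E$---contradicting the reduction, which demands $E$-equivalence; if $W_a\neq W_b$ with least symmetric-difference element $k^\ast\leq n$, then the add-trigger eventually fires on $k^\ast$ and the remove-trigger cannot fire (since $k^\ast$ never enters the other side), leaving $(x_e^n,y_e^n)\in E$---again contradicting the reduction.

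The hard part, and the main obstacle, is the residual case $W_a\neq W_b$ with $k^\ast>n$: the add-trigger may fire on a transient smaller-indexed disagreement, that disagreement then resolves, and the pair ends up outside $E$, which is consistent with the reduction. The plan to overcome this is to use infinitely many pairs per $\mathcal{R}_e$: for any fixed $\phi_e$ that purports to be a reduction, either $W_{\phi_e(x_e^n)}=W_{\phi_e(y_e^n)}$ holds for some $n$ (then that pair falls into the first clean case) or the function $n\mapsto k^\ast_n$ measuring the first disagreement would have to grow without bound across every pair, in which case one selects a supplementary witness family with reverse starting commitments---pairs that begin inside $E$ (placed in $E_0$) and may be split off, contributing the dual transition pattern---so that the combined case analysis forces a diagonalization on some pair. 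The delicate combinatorial argument is to show that these two families together cover every possible $\phi_e$.

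Verification proceeds along three standard lines. First, $E$ is an equivalence relation throughout, since distinct strategies act on disjoint witnesses, and the transitions within a strategy are arranged to respect transitivity (when a pair is added or removed, no conflicting transitivity pressure is created because its witnesses are paired only within the same strategy block). Second, $E$ is d-c.e.\ because each pair undergoes at most two transitions (and the overall graph is therefore the difference of two c.e.\ sets accumulating ``add'' and ``remove'' events respectively). Third, each $\mathcal{R}_e$ is satisfied by the case analysis above, so no computable $\phi_e$ reduces $E$ to $\Id^{\dpl}$.
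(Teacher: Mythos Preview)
Your strategy has a real gap in what you call the ``second clean case.'' You claim that when $W_a\neq W_b$ with least symmetric-difference element $k^\ast\le n$, the add-trigger eventually fires on $k^\ast$ and the remove-trigger cannot fire. But the add-trigger fires on the \emph{first} $k\le n$ you observe in $W_{a,s}\triangle W_{b,s}$, and nothing prevents this from being some transient $k_1\ne k^\ast$ (say $k_1$ enters $W_a$ early, and only later enters $W_b$). When $k_1$ resolves, your remove-trigger fires, and the d-c.e.\ budget for that pair is exhausted---\emph{before} $k^\ast$ is ever seen. The pair ends outside $E$, which is perfectly consistent with $\phi_e$ being a reduction. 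So even the case $k^\ast\le n$ is not clean. Your ``supplementary witness family with reverse starting commitments'' does not fix this: pairs starting in $E$ still have only two transitions (and in fact effectively one useful transition out of $E$), so you face the symmetric problem. The underlying obstacle is that you are trying to decide the $\Pi^0_2$ question ``$W_a=W_b$?'' by passive observation with a two-change budget, and a single pair (or even a scattered family of pairs each looking at its own $(a_n,b_n)$) cannot do this.

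The paper's argument avoids this by not observing but \emph{forcing}. It fixes two permanent anchor classes $[4e]_E$ and $[4e+2]_E$ (never collapsed) and uses an infinite pool of auxiliary witnesses $z$. When some $w$ appears in $W_{\phi_e(4e)}$, it attaches a fresh $z$ to $[4e]_E$; if $\phi_e$ is a reduction, $w$ must then enter $W_{\phi_e(z)}$, and at that moment $z$ is detached from $[4e]_E$ and attached to $[4e+2]_E$---so now $w$ must enter $W_{\phi_e(4e+2)}$. Each $z$ undergoes exactly two transitions, but collectively the $z$'s pump every element of $W_{\phi_e(4e)}$ into $W_{\phi_e(4e+2)}$ and vice versa, forcing $W_{\phi_e(4e)}=W_{\phi_e(4e+2)}$ while $4e\mathrel{\cancel E}4e+2$. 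The key idea you are missing is this active use of the reduction against itself, via movable auxiliaries between two fixed anchors, rather than a passive attempt to detect inequality of c.e.\ sets.
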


\begin{proof}
We partition the odd numbers into countably many sets $S_e$ for $e\in \omega$.
Let $z_{\langle e,i\rangle}$ be the $i$th element of $S_e$. We construct a d-c.e.\ equivalence relation $E$ by stages. We never make any pair of even numbers $E$-equivalent. We may make elements of $S_e$ be $E$-equivalent to $4e$ or $4e+2$ or neither. 

We satisfy the following requirements:
\begin{itemize}
\item[$\mathcal{R}_e$]: $\phi_e$ is not a reduction of $E$ to $\Id^{\dpl}$. 
\end{itemize}
 The strategy for meeting the $\mathcal{R}$-requirements is twofold. On the one hand, we ensure that $4e \mathrel{\cancel{E}} 4e+2$, for all $e$ (in fact every pair of even numbers are $E$-inequivalent). This action forces $W_{\phi_e(4e)}\neq W_{\phi_e(4e+2)}$, otherwise $\phi_e$ would not be a reduction. But, on the other hand, we use the $z_{\langle e,i\rangle}$'s to gradually copy $W_{\phi_e(4e)}$ into $W_{\phi_e(4e+2)}$ and vice versa. Let's discuss in more detail the  module for diagonalizing against a potential reduction $\phi_e$: 
 
 Let $e_0=4e$ and $e_1=4e+2$.

\begin{enumerate}
\item If at some stage $s$ a number $w$ appears in $W_{\phi_e(e_k)}$, for $k\in\{0,1\}$, we take the least unused $z_{\langle e,i\rangle}$ and we let $e_k \mathrel{E} z_{\langle e,i\rangle}$.
\item We wait to see if $w$ appears in  $W_{\phi_e(z_{\langle e,i\rangle})}$. If this happens, we declare $e_k \mathrel{\cancel{E}} z_{\langle e,i\rangle}$ and we let $e_{1-k} \mathrel{E} z_{\langle e,i\rangle}$ instead. 
\end{enumerate}
\smallskip

Now, towards a contradiction, suppose that there is a reduction $\phi_j$ from $E$ to $\Id^{\dpl}$. Since the construction ensures that $4j \mathrel{\cancel{E}} 4j+2$, it must be the case that $W_{\phi_j(j_0)}\neq W_{\phi_j(j_1)}$. Without loss of generality, let $v \in W_{\phi_j(4j)}\smallsetminus W_{\phi_j(4j+2)}$. But then, by item (1) of the module, we have that, at some stage $s$,  $4j$ is  $E$-collapsed with some $z_{\langle j,i\rangle}$.  Observe that, after this collapse, $v$ must enter in $W_{\phi_j(z_{\langle j,i\rangle})}$ (as otherwise, we would have that $W_{\phi_j(z_{\langle j,i\rangle})}\neq W_{\phi_j(4j)}$ but $4j \mathrel{E} z_{\langle j,i\rangle}$, a contradiction). When this happens, by item (2), we make $4j \mathrel{\cancel{E}} z_{\langle j,i\rangle}$ and we let $4j+2 \mathrel{E} z_{\langle j,i\rangle}$ instead. This action guarantees that there is a  stage at which $v$ appears in $W_{\phi_j(4j+2)}$ (as otherwise, $W_{\phi_j(z_{\langle j,i\rangle})}\neq W_{\phi_j(4j+2)}$ but $4j+2 \mathrel{E} z_{\langle j,i\rangle}$), contradicting the assumption that $v \in W_{\phi_j(4j)}\smallsetminus W_{\phi_j(4j+2)}$.

Finally, it immediately follows from the construction that $E$ is d-c.e., since there is no pair of numbers on which $E$ makes more than two mind changes.
%
%
\end{proof}


Theorem~\ref{completeness at odd levels} gives a nice way to represent the arithmetical equivalence relations in terms of FS-jumps, but it is not sharp at the even layers. For example, every $\Sigma^0_2$ and $\Pi^0_2$ equivalence relation reduces to $\Id^{\dpl 2}$, but $\Id^{\dpl 2}$ is $\Pi^0_4$ and we should expect to find a $\Pi^0_3$ equivalence relation that is universal for all $\Sigma^0_2$ and $\Pi^0_2$-equivalence relations. The next lemma gives us an analogous result at the even layers of the arithmetical hierarchy.

\begin{lem}
	Let $Z$ be a universal $\Pi^0_1$-equivalence relation (which exists by \cite[Theorem 3.3]{IanovskiMillerNgNies}). Then every $\Sigma^0_{2n}$ and $\Pi^0_{2n}$ equivalence relation reduces to $Z^{\dpl \hat n}$.
\end{lem}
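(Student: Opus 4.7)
The plan is to mirror the proof of Theorem~\ref{completeness at odd levels}, replacing only the base case (Lemma~\ref{baseofinduction}) with a new base case that exploits the $\Pi^0_1$-universality of $Z$. Specifically, I will prove that every $\Sigma^0_2$ set \SSR to $Z^\dpl$. The inductive step then runs verbatim: writing an arbitrary $\Sigma^0_{2(n+1)}$ set $X$ as $X(x)\leftrightarrow \exists p\,\forall q\,(\forall m<q\,B(\langle x,p,m\rangle))$ with $B\in\Sigma^0_{2n}$, the inner $\Sigma^0_{2n}$ predicate \SSR to $Z^{\dpl\hat{n}}$ by induction and trivially has the initial-segment property, so Lemma~\ref{Sigma^0_2 induction} yields that $X$ \SSR to $Z^{\dpl\widehat{n+1}}$, using $Z^{\dpl\hat{n}}\times \Id \leq Z^{\dpl\hat{n}}$ from \cite[Corollary~2.9]{CCK}. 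Finally, Lemma~\ref{From SSR to ER reduction} converts the \SSN of the graph of a $\Sigma^0_{2n}$ equivalence relation, or of the complement of the graph of a $\Pi^0_{2n}$ equivalence relation, into a reduction to $Z^{\dpl\hat{n}}$.

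For the base case, fix $A\in\Sigma^0_2$ with $A(n)\leftrightarrow \exists k\,\forall m\,R(n,k,m)$ and $R$ computable. I will design an auxiliary $\Pi^0_1$ equivalence relation $F$ on a computable domain containing a distinguished point $*$ and elements $\{\epsilon_{n,k,s}\}_{n,k,s}$, declaring $\epsilon_{n,k,s}\mathrel{F}*$ exactly when $\forall m\geq s\,R(n,k,m)$ (a $\Pi^0_1$ condition) and closing by transitivity; so any two $\epsilon$'s satisfying their $\forall$-conditions are $F$-equivalent, and every other $\epsilon$ is an $F$-singleton. Using $\Pi^0_1$-universality, pick a computable reduction $g\colon F\leq Z$ and set $c=g(*)$, $\delta_{n,k,s}=g(\epsilon_{n,k,s})$. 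Take $W_j=\omega$ and $W_i=\omega\smallsetminus[c]_Z$, which is c.e.\ because $[c]_Z$ is $\Pi^0_1$. Let $W_{h(n)}$ enumerate $W_i$ together with each $\delta_{n,k,s}$ for which $\forall m\leq s\,R(n,k,m)$ (a decidable condition at stage $s$). If $A(n)$, any witness $k_0$ makes $\delta_{n,k_0,s}$ enumerated and $Z$-equivalent to $c$ for every $s$, so $[W_{h(n)}]_Z=\omega=[W_j]_Z$. If $\neg A(n)$ and $\delta_{n,k,s}$ is ever enumerated, then $\forall m\leq s\,R(n,k,m)$; picking the promised $m$ with $\neg R(n,k,m)$ forces $m>s$, hence $\exists m\geq s\,\neg R(n,k,m)$, so $\delta_{n,k,s}\mathrel{\cancel{Z}}c$ and $\delta_{n,k,s}\in W_i$. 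Thus $W_{h(n)}=W_i$ as c.e.\ sets, giving $[W_{h(n)}]_Z=[W_i]_Z$.

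The main technical point will be aligning the $n$-dependent $W_{h(n)}$ with the fixed targets $W_i,W_j$ that \SSN requires. Naive constructions produce a $[W_{h(n)}]_Z$ polluted by $n$-specific noise from $\delta$'s whose $\forall$-conditions fail; the trick above is to choose $W_i$ large enough to absorb any such noise automatically, which is possible precisely because $[c]_Z$ being $\Pi^0_1$ makes $\omega\smallsetminus[c]_Z$ c.e. Once this base case is in place, nothing else in the inductive skeleton of Theorem~\ref{completeness at odd levels} needs adjustment.
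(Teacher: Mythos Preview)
Your proposal is correct and follows essentially the same architecture as the paper's proof: establish that every $\Sigma^0_2$ set \SSR to $Z^\dpl$ as a new base case, then run the induction via Lemma~\ref{Sigma^0_2 induction} and conclude with Lemma~\ref{From SSR to ER reduction}. Your base-case construction is a mild variant of the paper's---where the paper builds the auxiliary $\Pi^0_1$ relation $Y$ dynamically (using the recursion theorem to pick fresh witnesses and kick them out of $[0]_Y$ when the $\Sigma^0_2$ approximation drops), you define your $F$ statically by a formula and then pull it back through a fixed reduction $g\colon F\leq Z$; both exploit the same key point that the complement of a $Z$-class is c.e.\ and can absorb all failed witnesses. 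One small bookkeeping remark: the hypothesis $Z^{\dpl\hat n}\times\Id\leq Z^{\dpl\hat n}$ (in particular $Z\times\Id\leq Z$ for $n=0$) is not literally an instance of \cite[Corollary~2.9]{CCK}, which concerns $\Id$; the paper obtains it by first noting $Z\times\Id\leq Z$ from $\Pi^0_1$-universality and then invoking \cite[Proposition~2.8]{CCK}.
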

\begin{proof}
	We first observe that since $Z$ is $\Pi^0_1$-universal and $Z\times \Id$ is $\Pi^0_1$, we have $Z\times \Id\leq Z$. Thus $Z^{\dpl a}\times \Id\leq Z^{\dpl a}$ for any $a\in \O$ \cite[Proposition 2.8]{CCK}.
	
	As above, we will first show by induction that every $\Sigma^0_{2n}$ set \SSR to $Z^{\dpl \hat n}$.
	As the base of our induction, we first show that every $\Sigma^0_2$ set \SSR to $Z^\dpl$. To see this, we fix a $\Sigma^0_2$ set $A$ and we construct a $\Pi^0_1$-equivalence relation $Y$ and show that $A$ \SSR to $Y^\dpl$. This suffices by Lemma~\ref{Going from E times Id to E}.
	
	We fix an computable approximation $(A_s)_{s\in \omega}$ to $A$ so that $x\in A$ if and only if $x\in A_s$ for all sufficiently large $s$. We build a reduction by sending every $x$ to an index $e_x$ which we control by the recursion theorem. We enumerate the complement of $[0]_Y$ into each $W_{e_x}$.
	At stages $s$ when $x\in A_s$, we take a fresh number $z$ and enumerate $z$ into $W_{e_x}$. If at a later stage $t>s$ we have $x\notin A_t$ then we make $z\mathrel{\cancel{Y}} 0$. In fact, we make $[z]_Y=\{z\}$. If we never see such a $t$, we will maintain $z \mathrel{Y} 0$. If $x\notin A$, then $[W_{e_x}]_Y=\omega\smallsetminus [0]_Y$. If $x\in A$, then $[W_{e_x}]_Y=\omega$. This shows that every $\Sigma^0_2$ set \SSR to $Z^\dpl$.
	
	As the step of our induction, we apply Lemma~\ref{Sigma^0_2 induction} as in the proof of Theorem~\ref{completeness at odd levels} using the fact that $Z^{\dpl\hat n}\times \Id\leq Z^{\dpl\hat  n}$.
	
	Finally, Lemma~\ref{From SSR to ER reduction} shows that every $\Sigma^0_{2n}$ or $\Pi^0_{2n}$ equivalence relation reduces to $Z^{\dpl \hat n}$.
\end{proof}

To move to transfinite levels in the HYP hierarchy, we show that we can handle negations and effective unions.

\begin{lem}\label{negations}
	If $A$ \SSR to $E^\dpl$ then the complement of $A$ \SSR to $E^{\dpl\dpl}$.
\end{lem}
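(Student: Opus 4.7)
The plan is to build the witness for the complement of $A$ by augmenting the given reduction with one fixed extra index, using the extra jump to flip orientation. Fix $(h_0,i_0,j_0)$ witnessing that $A$ \SSR to $E^{\dpl}$, so $W_{i_0}\subseteq_E W_{j_0}$, and $h_0(x)$ is $E^{\dpl}$-equivalent to $j_0$ if $x\in A$ and to $i_0$ if $x\notin A$. The core idea is that if we package $h_0(x)$ together with a fixed copy of $j_0$ into a two-element c.e.\ family of indices, the resulting family collapses (under $E^{\dpl}$-closure) to a single $E^{\dpl}$-class exactly when $h_0(x)\mathrel{E^{\dpl}} j_0$, i.e.\ exactly when $x\in A$; otherwise it carries two distinct $E^{\dpl}$-classes, namely those of $i_0$ and $j_0$. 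Under $E^{\dpl\dpl}$, then, this family becomes ``larger'' precisely when $x\notin A$, which is the reversal required by passing to the complement.

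Concretely, I would let $h(x)$ be a c.e.\ index (uniformly computable in $x$) for the finite set of indices $\{h_0(x),j_0\}$, take $i$ to be a c.e.\ index for $\{j_0\}$, and take $j$ to be a c.e.\ index for $\{i_0,j_0\}$. The containment $W_i\subseteq_{E^{\dpl}} W_j$ is immediate since $\{j_0\}\subseteq \{i_0,j_0\}$ already as plain sets. Then I would verify the two cases using the identity
\[
[W_{h(x)}]_{E^{\dpl}} \;=\; [h_0(x)]_{E^{\dpl}}\,\cup\, [j_0]_{E^{\dpl}}.
\]
When $x\in A$ the right-hand side collapses to $[j_0]_{E^{\dpl}}=[W_{i}]_{E^{\dpl}}$, giving $h(x)\mathrel{E^{\dpl\dpl}} i$; when $x\notin A$ it equals $[i_0]_{E^{\dpl}}\cup [j_0]_{E^{\dpl}}=[W_{j}]_{E^{\dpl}}$, giving $h(x)\mathrel{E^{\dpl\dpl}} j$. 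This is exactly the statement that the complement of $A$ \SSR to $E^{\dpl\dpl}$ via $(h,i,j)$.

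I do not foresee any real obstacle. The entire content of the lemma is the observation that adjoining a single ``large'' representative $j_0$ to $h_0(x)$ converts the one-sided comparison $W_{i_0}\subseteq_E W_{j_0}$ into a genuine difference in the number of $E^{\dpl}$-classes that appear inside the packaged family, and this is precisely the extra bit of information that $E^{\dpl\dpl}$ is able to read off while flipping the direction of the reduction. The degenerate possibility $W_{i_0}=_E W_{j_0}$ is not problematic, since then every $h_0(x)$ is already $E^{\dpl}$-equivalent to $j_0$ and both $A$ and its complement \SSR trivially.
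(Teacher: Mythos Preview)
Your proof is correct and in fact simpler than the paper's. Both arguments exploit the inclusion $W_{i_0}\subseteq_E W_{j_0}$ to produce an anti-monotone passage at the next jump, but they do so differently. The paper sends $x$ to an index for the family of \emph{all c.e.\ supersets} of $W_{h_0(x)}$ (and similarly for $i_0,j_0$); since a larger set has fewer supersets up to $E$-closure, the roles of $i$ and $j$ swap. You instead use the minimal two-element family $\{h_0(x),j_0\}$, which collapses to one $E^\dpl$-class when $x\in A$ and carries two when $x\notin A$. Your construction is more economical and makes the reversal mechanism completely transparent; the paper's superset construction is perhaps more canonical as a closure operation but requires a moment's thought to see that ``all c.e.\ supersets of $W$'' is uniformly enumerable (via $W\cup W_d$ over all $d$) and that the resulting families agree up to $E^\dpl$ when the base sets agree up to $E$. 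Either way the content is the same one-line trick.
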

\begin{proof}
	Let $(h,i,j)$ witness that $A$ \SSR to $E^\dpl$. Then let $W_{g(x)}$ enumerate the collection of all c.e. supersets of $W_{h(x)}$. Let $W_a$ be an index for the collection of all c.e. supersets of $W_i$ and $W_b$ be an index for the collection of all c.e. supersets of $W_j$. Then $(g,b,a)$ witnesses that the complement of $A$ \SSR to $E^{\dpl\dpl}$.
\end{proof}

\begin{lem}\label{unions}
	Suppose that each member of $(A_k)_{k\in \omega}$ uniformly \SSR to $E^\dpl$ via $(h_k,i_k,j_k)$. Further suppose that $E\times \Id \leq E$. Let $B(n)$ hold if and only if $\exists k A_k(n)$. Then $B$ \SSR $E^{\dpl\dpl}$.
\end{lem}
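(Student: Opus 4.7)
The plan is to adapt the proof technique of Lemma~\ref{Sigma^0_2 induction}, using the column structure afforded by $E\times \Id\leq E$. By Lemma~\ref{Going from E times Id to E} it suffices to show $B$ \SSR to $(E\times \Id)^{\dpl\dpl}$. For each $k$, let $h_k^*,i_k^*,j_k^*$ be the shifts of $h_k,i_k,j_k$ onto the $k$-th column of $E\times \Id$, so that $W_{h_k^*(n)}$ lives in column $k$ and has $(E\times\Id)$-closure $[W_{i_k^*}]$ if $n\notin A_k$ and $[W_{j_k^*}]$ if $n\in A_k$. After a harmless WLOG (trivially true or false $A_k$ can be handled directly), we may assume $[W_{i_k^*}]_{E\times\Id}\subsetneq [W_{j_k^*}]_{E\times\Id}$ for every $k$.

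The idea is to arrange that a single extra $(E\times\Id)^\dpl$-class enters the closure of $W_{g(n)}$ exactly when some $A_k$ contains $n$, regardless of which one. For each $m$, define the c.e.\ sets
\[
V_{\omega\setminus\{m\}} \;=\; \bigcup_{k\neq m} W_{j_k^*}\cup W_{i_m^*}, \qquad R_{n,m} \;=\; \bigcup_{k\neq m} W_{j_k^*}\cup W_{h_m^*(n)},
\]
and set $V_\omega=\bigcup_k W_{j_k^*}$. Let $W_{g(n)}$ enumerate c.e.\ indices for all $V_{\omega\setminus\{m\}}$ and all $R_{n,m}$ for $m\in\omega$; let $W_a$ enumerate just the $V_{\omega\setminus\{m\}}$; and let $W_b$ enumerate the $V_{\omega\setminus\{m\}}$ together with $V_\omega$. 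Then $W_a\subsetneq_{(E\times\Id)^\dpl} W_b$.

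Verification is a direct closure computation. If $\neg B(n)$, then every $R_{n,m}$ has $(E\times\Id)$-closure $[V_{\omega\setminus\{m\}}]$, already contributed by $W_a$, so $W_{g(n)}$ and $W_a$ represent the same $(E\times\Id)^\dpl$-family. If $B(n)$ and $m_0\in A_*:=\{k:n\in A_k\}$, then $R_{n,m_0}$ has closure $[V_\omega]$, while for the remaining $m$ the closures are again $[V_{\omega\setminus\{m\}}]$ or $[V_\omega]$; thus $W_{g(n)}$ and $W_b$ represent the same family. Applying Lemma~\ref{Going from E times Id to E} then pushes the \SSR up from $(E\times \Id)^{\dpl\dpl}$ to $E^{\dpl\dpl}$.

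The main obstacle to anticipate is that a naive attempt would encode into the family exactly which subset $A_*\subseteq\omega$ is present, rather than merely whether it is empty. The design of $R_{n,m}$ is what defeats this: by pairing $W_{h_m^*(n)}$ with the full union $\bigcup_{k\neq m} W_{j_k^*}$, a single witness $m_0\in A_*$ immediately promotes $R_{n,m_0}$'s closure all the way up to the maximal class $[V_\omega]$, so the resulting family registers only one bit of information---whether $A_*$ is empty or not.
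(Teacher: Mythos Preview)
Your proof is correct and essentially identical to the paper's: the sets $V_{\omega\setminus\{m\}}$ and $R_{n,m}$ are exactly the paper's $V^m$ and $V^m\cup W_{g_m(n)}$ (up to $E$-closure, since $W_{i_m^*}\subseteq_{E\times\Id} W_{h_m^*(n)}$), and the triple $(g,a,b)$ and the verification match. The WLOG about strict containment is unnecessary---the definition of \SSN only requires $W_a\subseteq_{(E\times\Id)^\dpl} W_b$, and the degenerate case where some $W_{i_k}=_E W_{j_k}$ just makes $a\mathrel{(E\times\Id)^{\dpl\dpl}} b$, which still satisfies the definition---but it does no harm.
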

\begin{proof}
	As above, for each $x$, let $g_x$ be the function showing that $A_x$ \SSR to $(E\times \Id)^\dpl$ using only the $x$th column. That is, 
$W_{g_x(n)}=\{\langle x,y\rangle \mid y\in W_{g_x(n)}\}$.
	
	We first show that $B$ \SSR to $(E\times \Id)^{\dpl\dpl}$.
	
	For each $x\in \omega$, let $W^x_i$ be the set $\{\langle x, y \rangle \mid y\in W_{i_x}\}$. Similarly for $W_j^x$. Finally, let $V^x=W_i^x\cup \bigcup_{y\neq x}W_j^y$.
	
	Let $f(n)$ be a c.e. index for a set which contains indices for every $V^x$ and also contains indices for the sets $V^x\cup W_{g_x(n)}$. 
	If $n\notin B$, then for every $n$, $g_x(n)$ is an index for $W_i^x$, so each set $V^x\cup W_{g_x(n)}$ is 
	 a copy of $V^x$. So, the family is exactly the collection of $V^x$'s. If $n\in B$, then for some $n$ we have $W_{g_x(n)}=W_j^x$, so  $V^x\cup W_{g_x(n)}=\bigcup_{z\in \omega} W_j^z$.

	Finally, $E\times \Id \leq E$ gives the result by Lemma~\ref{Going from E times Id to E}.
\end{proof}

At this point, we can take effective unions and we can take negations. That's all we need to induct up the HYP hierarchy:

\begin{lem}\label{lem:Id jumps are cofinal in HYP ERs}
	Every HYP set \SSR to $\Id^{\dpl a}$ for some $a\in \mathcal{O}$.
\end{lem}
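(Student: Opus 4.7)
The plan is effective transfinite induction on notations $a\in\mathcal{O}$. I would simultaneously define a computable, $<_\mathcal{O}$-monotone function $b\colon\mathcal{O}\to\mathcal{O}$ taking values in successor notations, and verify that any set at level $a$ of the effective Borel hierarchy indexed by $\mathcal{O}$ uniformly \SSR to $\Id^{\dpl b(a)}$. Since every HYP set occurs at some such level, this yields the lemma. Here ``level $1$'' means c.e., ``level $P(c)$'' means effective union of $\Pi^0_c$ sets, and ``level $3\cdot 5^e$'' means effective union of sets $\Pi^0_{\phi_e(k)}$ uniformly in $k$.

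The base case is Lemma~\ref{baseofinduction}, giving $b(1)=P(1)$. Throughout the induction, the hypothesis covers $\Pi^0_a$ via Lemma~\ref{negations}: since $b(a)$ is always a successor, $\Id^{\dpl b(a)}$ is of the form $E^\dpl$, so Lemma~\ref{negations} upgrades \SSR to $\Id^{\dpl b(a)}$ into \SSR to $\Id^{\dpl P(b(a))}$.

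For the successor step $a=P(c)$, a $\Sigma^0_a$ set is an effective union $\bigcup_k B_k$ of $\Pi^0_c$ sets; by the inductive treatment each $B_k$ uniformly \SSR to $E^\dpl$ with $E=\Id^{\dpl b(c)}$. Since $E\times\Id\leq E$ by \cite[Corollary~2.9]{CCK}, Lemma~\ref{unions} gives that the union \SSR to $E^{\dpl\dpl}=\Id^{\dpl P^{(2)}(b(c))}$, so I would set $b(a)=P^{(2)}(b(c))$. For the limit step $a=3\cdot 5^e$, a $\Sigma^0_a$ set is an effective union $\bigcup_k X_k$ with each $X_k$ being $\Pi^0_{\phi_e(k)}$ uniformly, so by induction each $X_k$ uniformly \SSR to $\Id^{\dpl P(b(\phi_e(k)))}$. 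Lemma~\ref{unions} needs a single target, so I would invoke the recursion theorem to define $d=3\cdot 5^{e'}$ with $\phi_{e'}(k)=b(\phi_e(k))$; this is a legitimate $\mathcal{O}$-notation because $b$ is $<_\mathcal{O}$-monotone and $k\mapsto\phi_e(k)$ is $\mathcal{O}$-increasing, so $\phi_{e'}$ is an $\mathcal{O}$-increasing sequence. Then $b(\phi_e(k))<_\mathcal{O} d$ for every $k$, so Observation~\ref{obs:Uniform reduction to your own jump} combined with Lemma~\ref{Going from E times Id to E} yields that each $X_k$ uniformly \SSR to $(\Id^{\dpl d})^\dpl=\Id^{\dpl P(d)}$. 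A final application of Lemma~\ref{unions} with $E=\Id^{\dpl d}$ then gives $\bigcup_k X_k$ \SSR to $\Id^{\dpl P^{(2)}(d)}$, and I would set $b(a)=P^{(2)}(d)$, again a successor, with monotonicity inherited from the values at the $\phi_e(k)$'s.

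The main technical hurdle is the limit case: organizing the effective transfinite recursion with the recursion theorem so that $b$ is a well-defined computable $<_\mathcal{O}$-monotone function, and constructing the common upper-bound notation $d$ so that all the individually indexed reductions can be funneled into a single target suitable for Lemma~\ref{unions}. Everything else (successor step, base case, handling $\Pi$ via negation) is a direct unfolding of the machinery built up in Lemmas~\ref{baseofinduction}, \ref{Sigma^0_2 induction}, \ref{From SSR to ER reduction}, \ref{Going from E times Id to E}, \ref{negations}, and \ref{unions}.
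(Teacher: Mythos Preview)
Your proposal is correct and follows essentially the same route as the paper: both define a computable, $<_\mathcal{O}$-monotone function on notations (the paper calls it $H$, you call it $b$) via effective transfinite recursion, handle the base case with Lemma~\ref{baseofinduction}, the successor step via Lemma~\ref{negations} followed by Lemma~\ref{unions}, and the limit step by forming a new limit notation from the images of the approximating sequence and then funneling everything through Observation~\ref{obs:Uniform reduction to your own jump} and Lemma~\ref{Going from E times Id to E} before a final application of Lemma~\ref{unions}. Your explicit bookkeeping that $b$ always lands on successor notations (so that Lemma~\ref{negations} applies) is a detail the paper leaves implicit but which is indeed needed; otherwise the arguments are interchangeable.
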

\begin{proof}
	We proceed by induction on notations for computable ordinals with the base case done by Lemma~\ref{baseofinduction}.
	
	Formally, we show that for every notation $c$ for an ordinal $\alpha$, there is some $a$ so that every $\Sigma^0_\alpha$ set uniformly \SSR to $\Id^{\dpl a}$ (i.e. we can find the index of the witness $(h,i,j)$ uniformly from an index of $A$ as a $\Sigma^0_c$ set). Further our construction will produce a computable function $H$ going from $c$ to the notation $a$. Further, whenever $c<_\mathcal O d$, we will have $H(c)<_\mathcal O H(d)$.

\subsubsection*{Successor step} Suppose every $\Sigma^0_\alpha$ set uniformly \SSR to $\Id^{\dpl a}$. Then every $\Pi^0_\alpha$ set uniformly \SSR to $\Id^{\dpl P(a)}$ by Lemma~\ref{negations}. Let $A$ be a $\Sigma^0_{\alpha+1}$ set. Then $A$ is an effective union of $\Pi^0_{\alpha}$ sets. Thus $A$ \SSR to $\Id^{\dpl P^{(2)}(a)}$ by Lemma~\ref{unions}, and this argument is uniform.

\subsubsection*{Limit step} Let $c=3\cdot 5^i$. Then we let $a=3\cdot 5^e$ where $\phi_e(n)=H(\phi_i(n))$. Since by the inductive hypothesis, we know that $H(\phi_i(n))<_\mathcal O H(\phi_i(n+1))$ for every $n\in \omega$, we have $a\in \mathcal O$.

%

If $A$ is a $\Sigma^0_c$ set, then it is an effective union of $\Sigma^0_b$ sets for $b<_{\mathcal{O}} c$. Each of these uniformly \SSR to $\Id^{\dpl P(a)}$ by the uniformity in Observation \ref{obs:Uniform reduction to your own jump} and Lemma \ref{Going from E times Id to E}. 
So, the effective union \SSR to $\Id^{\dpl P^{(2)}(a)}$ by Lemma~\ref{unions}. This argument is uniform, and we can let $H(c) = P^{(2)}(a)$.
\end{proof}

\begin{cor}
	Every HYP equivalence relation reduces to $\Id^{\dpl a}$ for some $a\in \mathcal{O}$.
\end{cor}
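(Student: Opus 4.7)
The plan is to combine Lemma~\ref{lem:Id jumps are cofinal in HYP ERs} with Lemma~\ref{From SSR to ER reduction}. Given a HYP equivalence relation $R$ on $\omega$, I would form the set $A := \{\langle x,y\rangle : x \mathrel{R} y\}$. Since $R$ is HYP, so is $A$. Apply Lemma~\ref{lem:Id jumps are cofinal in HYP ERs} to obtain a notation $a \in \mathcal{O}$ for which $A$ \SSR to $\Id^{\dpl a}$.

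To convert this \SSN of the graph of $R$ into an actual reduction of $R$, I want to invoke Lemma~\ref{From SSR to ER reduction}. The hypothesis there requires that the target satisfies $E \times \Id \le E$, where here $E = \Id^{\dpl a}$. This is exactly the content of \cite[Corollary 2.9]{CCK}, already cited in the paper, so the hypothesis is free. Lemma~\ref{From SSR to ER reduction} then yields $R \le (\Id^{\dpl a})^\dpl = \Id^{\dpl P(a)}$, and $P(a) \in \mathcal{O}$ is the desired notation.

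There is essentially no new obstacle: all the heavy lifting has already been done in Lemmas~\ref{baseofinduction}, \ref{Sigma^0_2 induction}, \ref{negations}, and \ref{unions}, which were chained together in Lemma~\ref{lem:Id jumps are cofinal in HYP ERs} via transfinite induction on notations. The only step-of-care is bookkeeping: we reduce $R$ by reducing its graph \emph{as a set}, which is why we phrased the earlier machinery in terms of the stronger notion of \SSN rather than many-one reduction. Thus the statement follows as a one-line corollary of Lemma~\ref{lem:Id jumps are cofinal in HYP ERs} together with Lemma~\ref{From SSR to ER reduction}.
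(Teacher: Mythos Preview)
Your approach is the same as the paper's, which literally reads ``Combine the above with Lemma~\ref{From SSR to ER reduction}.'' There is one small bookkeeping slip: when you write ``here $E = \Id^{\dpl a}$,'' you have misidentified $E$. The relation ``$A$ \SSR to $\Id^{\dpl a}$'' already presents $\Id^{\dpl a}$ as some $E^\dpl$; since the $a$ produced by Lemma~\ref{lem:Id jumps are cofinal in HYP ERs} is always a successor notation $a = P(b)$, the correct choice is $E = \Id^{\dpl b}$, and Lemma~\ref{From SSR to ER reduction} then gives $R \leq \Id^{\dpl a}$ directly (no extra jump needed). Your version, taking $E = \Id^{\dpl a}$ and landing at $\Id^{\dpl P(a)}$, would require the intermediate step ``$A$ \SSR to $\Id^{\dpl a}$ implies $A$ \SSR to $(\Id^{\dpl a})^\dpl$,'' which does hold via Lemma~\ref{Going from E times Id to E} but which you did not invoke. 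Either route is fine for the corollary as stated.
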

\begin{proof}
	Combine the above with Lemma~\ref{From SSR to ER reduction}.
\end{proof}

\begin{cor}
	Every HYP equivalence relation reduces to $=_{\Sigma^0_{a}}$ for some $a\in \mathcal{O}$. The degree of this only depends on the ordinal $\vert a \vert$.
\end{cor}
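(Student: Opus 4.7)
The plan is to chain together the two preceding results. Given a HYP equivalence relation $E$, the previous corollary supplies a notation $b\in\mathcal{O}$ such that $E\leq \Id^{\dpl b}$. The earlier theorem (giving $\Id^{\dpl b}\leq {=}_{\Sigma^0_{2\cdot_{\mathcal{O}}b}}$, uniformly in $b$) then lets me take $a := 2\cdot_{\mathcal{O}} b$, so that $E\leq {=}_{\Sigma^0_a}$. This is the existence half of the statement; it requires nothing new.

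For the invariance half I need to show that if $a, a'\in\mathcal{O}$ satisfy $\abs{a}=\abs{a'}$, then ${=}_{\Sigma^0_a}\equiv {=}_{\Sigma^0_{a'}}$. The argument is essentially the one used in Corollary~\ref{cor:Single ER bound all notations of same ordinal}: by Spector's uniqueness theorem, the iterated Turing jumps $H(a)$ and $H(a')$ are Turing equivalent uniformly in the notations, so a $\Sigma^0_a$-index for a set can be effectively translated into a $\Sigma^0_{a'}$-index for the same set and vice versa. Applying this index-translation pointwise gives mutual computable reductions between ${=}_{\Sigma^0_a}$ and ${=}_{\Sigma^0_{a'}}$, establishing the claimed degree-invariance.

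There is no real obstacle here beyond assembling the pieces cleanly; the substantive work has already been done in Lemma~\ref{lem:Id jumps are cofinal in HYP ERs}, Lemma~\ref{From SSR to ER reduction}, and the theorem bounding $\Id^{\dpl a}$ by ${=}_{\Sigma^0_{2\cdot_{\mathcal{O}}a}}$. The one small point to be careful about is the uniformity in Spector's theorem, which is standard and is the same ingredient invoked in Corollary~\ref{cor:Single ER bound all notations of same ordinal}, so it should be cited rather than reproved.
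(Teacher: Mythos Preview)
Your proposal is correct and follows essentially the same approach as the paper: the paper's one-line proof just says to combine the previous corollary with Corollary~\ref{cor:Single ER bound all notations of same ordinal}, and your unpacking of that citation (existence via $E\leq \Id^{\dpl b}\leq {=}_{\Sigma^0_{2\cdot_{\mathcal O}b}}$, invariance via Spector's uniqueness theorem) is exactly the content of that corollary's proof.
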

\begin{proof}
	Combine the above with Lemma~\ref{cor:Single ER bound all notations of same ordinal}.
\end{proof}

\begin{thm}
	If $E^{\dpl}\leq E$, then $E$ is $\geq$ every HYP equivalence relation.
\end{thm}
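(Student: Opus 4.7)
The plan is to reduce the theorem to showing $\Id^{\dpl a}\le E$ for every $a\in\mathcal{O}$. Granting this, Lemma~\ref{lem:Id jumps are cofinal in HYP ERs} combined with Lemma~\ref{From SSR to ER reduction} yields that every HYP equivalence relation reduces to some $\Id^{\dpl a}$ and hence to $E$. The first ingredient is the closure property $E\times\Id\le E$: since one always has $E\le E^\dpl$ via $x\mapsto\text{c.e.\ index of }\{x\}$, the hypothesis gives $E\equiv E^\dpl$, and then \cite[Corollary~2.9]{CCK} supplies $E\times\Id\le E^\dpl\times\Id\le E^\dpl\le E$.

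The reductions $\Id^{\dpl a}\le E$ are then produced by effective transfinite recursion on $\mathcal{O}$, with an index for the reduction computed uniformly from $a$ via the recursion theorem. For $a=1$ the embedding $n\mapsto\langle 0,n\rangle$ witnesses $\Id\le E\times\Id\le E$. For $a=P(b)$, I lift a reduction $\Id^{\dpl b}\le E$ componentwise on c.e.\ indices to obtain $(\Id^{\dpl b})^\dpl\le E^\dpl$, and compose with the hypothesis $E^\dpl\le E$. For a limit notation $a=3\cdot 5^e$, the inductive hypothesis gives uniform reductions $f_n\colon\Id^{\dpl\phi_e(n)}\le E$; assembling them via $\langle n,x\rangle\mapsto\langle n,f_n(x)\rangle$ yields
\[
\Id^{\dpl a}=\oplus_n\Id^{\dpl\phi_e(n)}\le\oplus_n E=\Id\times E\le E,
\]
where the last step uses the closure from the previous paragraph.

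The main obstacle is the bookkeeping needed to propagate full uniformity through the transfinite induction, so that at each limit step $a=3\cdot 5^e$ the construction has access to an effective sequence of reductions indexed by $\phi_e$. This is handled by exactly the same recursion-theoretic scaffolding that underlies Lemma~\ref{lem:Id jumps are cofinal in HYP ERs}; the combinatorics here is in fact lighter because we operate directly with equivalence-relation reductions rather than strong subset reductions, and the hypothesis $E^\dpl\le E$ silently absorbs the jump at every successor step.
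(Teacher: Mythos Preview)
Your plan agrees with the paper's: show $\Id^{\dpl a}\le E$ for every $a\in\mathcal O$, then invoke Lemma~\ref{lem:Id jumps are cofinal in HYP ERs} together with Lemma~\ref{From SSR to ER reduction}. The paper obtains the first step in one line by citing \cite[Propositions~2.3 and~2.7]{CCK}; you instead try to rerun the transfinite recursion by hand.

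The gap is in your justification of $E\times\Id\le E$. You write $E\times\Id\le E^\dpl\times\Id\le E^\dpl\le E$ and invoke \cite[Corollary~2.9]{CCK} for the middle inequality. But that corollary (as used throughout this paper) is specifically the statement $\Id^{\dpl a}\times\Id\le\Id^{\dpl a}$; it does not assert $R^\dpl\times\Id\le R^\dpl$ for arbitrary $R$, and indeed that general assertion is false: if $R$ has a single equivalence class then $R^\dpl$ has exactly two classes (indices of the empty set versus indices of nonempty sets), while $R^\dpl\times\Id$ has infinitely many, so no reduction can exist. Nothing in your argument distinguishes the given $E$ from such $R$ at the point where you make this claim. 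Since $E\times\Id\le E$ is what drives both your base case ($\Id\le E\times\Id\le E$) and your limit step ($\bigoplus_n E\cong\Id\times E\le E$), the recursion does not get off the ground as written.

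The conclusion $E\times\Id\le E$ \emph{is} true for jump fixed points, but establishing it (together with the ensuing recursion) is precisely what \cite[Propositions~2.3 and~2.7]{CCK} supply and what the paper simply cites. Your proposal promises that ``the hypothesis $E^\dpl\le E$ silently absorbs the jump at every successor step,'' which is correct, but it does not silently produce the closure under $\times\,\Id$ that you need at the base and at limits; that requires a genuine argument you have not given.
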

\begin{proof}
	If $E^\dpl\leq E$, then $E$ is above $\Id^{\dpl a}$ for every $a\in \mathcal O$ by \cite[Propositions 2.3 and 2.7]{CCK}. So this follows immediately from Lemma~\ref{lem:Id jumps are cofinal in HYP ERs}.
\end{proof}

\bibliographystyle{alpha}
\bibliography{refs}

\end{document}